\theoremstyle{plain}
\newtheorem{Theorem}{Theorem}[section]
\newtheorem{Proposition}[Theorem]{Proposition}
\newtheorem{Lemma}[Theorem]{Lemma}
\newtheorem{Remark}[Theorem]{Remark}
\newtheorem{Conjecture}[Theorem]{Conjecture}
\numberwithin{Theorem}{section}
\numberwithin{equation}{section}
\def\square{\vbox{
\hrule height .4pt \hbox{\vrule width .4pt height 7pt \kern 7pt
\vrule width .4pt} \hrule height .4pt }}
\def\QED{\hfill {$\square$}\goodbreak \medskip}
\newcommand{\average}{{\mathchoice {\kern1ex\vcenter{\hrule height.4pt
width 6pt depth0pt} \kern-9.7pt} {\kern1ex\vcenter{\hrule
height.4pt width 4.3pt depth0pt} \kern-7pt} {} {} }}
\def\R{\mathbb{R}}
\renewcommand{\a }{\alpha }
\newcommand{\D }{\Delta }
\newcommand{\e }{\varepsilon }
\renewcommand{\l }{\lambda }
\newcommand{\n }{\nabla }
\newcommand{\vp }{\varphi }
\newcommand{\s }{\sigma }
\renewcommand{\t }{t} 
\renewcommand{\o }{\omega }
\renewcommand{\O }{\Omega }
\newcommand{\ov}{\overline}
\newcommand{\be}{\begin{equation}}
\newcommand{\ee}{\end{equation}}
\newcommand{\de}{\partial}
\newcommand{\ti}{\widetilde}
\newcommand{\calU}{{\mathcal U}}
\renewcommand{\textbf}[1]{\begingroup\bfseries\mathversion{bold}#1\endgroup}
\newcommand{\N}{\mathbb{N}}
\newcommand{\Z}{\mathbb{Z}}
\newcommand{\cH}{{\mathcal H}}
\newcommand{\cL}{{\mathcal L}}
\newcommand{\cN}{{\mathcal N}}
\newcommand{\cP}{{\mathcal P}}
\newcommand{\cU}{{\mathcal U}}
\newcommand{\cX}{{\mathcal X}}
\newcommand{\cY}{{\mathcal Y}}
\newcommand{\eps}{\varepsilon}
\DeclareMathOperator{\id}{id}
\renewcommand{\epsilon}{\varepsilon}
\begin{document}

\title[The Schiffer problem on the cylinder and on the $2$-sphere] 
{The Schiffer problem on the cylinder and on the $2$-sphere}

\author{Mouhamed Moustapha Fall}
\address{M. M. F.: African Institute for Mathematical Sciences in Senegal, KM 2, Route de
Joal, B.P. 14 18. Mbour, Senegal.}
\email{mouhamed.m.fall@aims-senegal.org}


\author[I. A. Minlend]{Ignace Aristide Minlend} 
\address{Faculty of Economics and Applied Management, University of Douala,  BP 2701, Douala, Littoral Province, Cameroon}
\email{\small{ignace.minlend@univ-douala.com} }

\author{Tobias Weth}
\address{T.W.:  Goethe-Universit\"{a}t Frankfurt, Institut f\"{u}r Mathematik.
Robert-Mayer-Str. 10 D-60629 Frankfurt, Germany.}

\email{weth@math.uni-frankfurt.de}

\keywords{Schiffer conjecture, Overdetermined problems, Neumann eigenvalue, bifurcation }

\begin{abstract}
  We prove the existence of a family of compact  subdomains  $\O$ of the flat cylinder
  $\R^N\times \R/2\pi\Z$ for which the Neumann eigenvalue problem for the Laplacian on $\Omega$ admits eigenfunctions with constant Dirichlet values on $\partial \Omega$. These domains $\Omega$ have the property that their boundaries $\partial \Omega$ have nonconstant principal curvatures. In the context of ambient Riemannian manifolds, our construction provides the first examples of such domains whose boundaries are neither homogeneous nor isoparametric hypersurfaces. The functional analytic approach we develop in this paper overcomes an inherent loss of regularity of the problem in standard function spaces. With the help of this approach, we also construct a related family of subdomains of the $2$-sphere $S^2$. By this we disprove a conjecture in \cite{Souam}.
\end{abstract}
\maketitle

\textbf{MSC 2010}:  35J57, 35J66,  35N25, 35J25, 35R35, 58J55

\maketitle

\section{Introduction and main result}

A long standing open problem attributed to Schiffer, see e.g. S-T Yau \cite[Problem 80, p. 688]{Yau},  is to decide for which smooth bounded subdomains $\O \subset \R^N$ there exist a constant   $\mu >0$ and a solution  $u\ne 0$ to the overdetermined Neumann  problem
\begin{align*}
(\textrm{N}_\mu):  
  \left \{
    \begin{aligned}
        \D u +\mu  u &=  0 && \qquad \text{in $\Omega$,}\\
             |\n u| &=0 &&\qquad \text{on $\partial \Omega$,}\\
              u &=\text{const}  &&\qquad \text{on $\partial \Omega$.}
    \end{aligned}
       \right.
\end{align*}
It is conjectured that round balls are the only smooth bounded domains in $\R^N$ admitting a solution of $(\textrm{N}_{\mu})$ for some $\mu>0$. This conjecture is by now widely known as the \emph{Schiffer conjecture}. 

The conjecture is strongly connected to the so called Pompeiu  problem \cite{L.BrownSchreiberTaylor, Berenstein, S.A. Williams, Zalcman}.  A bounded domain $\O \subset \R^{N}$ is said to have  the \emph{Pompeiu property}  if $f \equiv 0$ is the only continuous function on $\R^{N}$ for which
$$
\int_{\sigma(\O)}f \,dx = 0 \qquad \text{for every rigid motion $\sigma$ of $\R^N$.}
$$
The Pompeiu  problem  consists  in characterizing the class of domains in $\R^{N}$  having  the Pompeiu  property. In  1976,  Williams  \cite{S.A. Williams}  proved that a smooth domain $\O \subset \R^N$, with $\partial \Omega$ homemorphic to a sphere, fails to satisfy the Pompeiu property if and only if  problem  $(\textrm{N}_{\mu})$ admits a solution with $\mu>0$ (see also \cite{L.BrownSchreiberTaylor,Berenstein-S}).
Williams also stated a weaker version of the Schiffer conjecture for the subclass of bounded domains which are homeomorphic to a ball.

Up to date, only partial results are available on the Schiffer conjecture.  In  \cite{Berenstein, Berenstein Yang}, Berenstein and Yang proved that the existence of infinitely many eigenvalues to $(\textrm{N}_\mu)$ implies that $\Omega$ must be a round ball. Moreover, letting
$$
0= \nu_1(\Omega) \le  \nu_2(\Omega) \leq  \dots
$$
denotes the sequence of Neumann eigenvalues of $-\Delta$ on $\Omega$, it has been proved by Deng in \cite{Deng} that a smooth bounded domain $\Omega \subset \R^2$ with $\partial \Omega$ connected is a ball if
\begin{itemize}
\item[(i)] $(\textrm{N}_{\mu})$ admits a solution for some $0<\mu<\nu_8(\Omega)$ or 
\item[(ii)] $\Omega$ is strictly convex, centrally symmetric  and $(\textrm{N}_{\mu})$ admits a solution for some $0<\mu<\nu_{13}(\Omega)$.
\end{itemize}
See also \cite{P. Aviles,WillmsGladwell,Friedlander} and the references in \cite{Deng} for earlier results in this direction. 

In \cite{V.Shklover}, Shklover studied a  generalization of  Schiffer's problem to an ambient Riemannian manifold $(M,g)$.
In order to discuss this problem in detail, it is convenient to call, here and in the following, a smooth bounded domain $\Omega \subset M$ a {\em Schiffer domain} if problem $(\textrm{N}_{\mu})$ has a solution in $\Omega$ for some $\mu>0$. Here $-\Delta=-\Delta_g$ has to be understood as the Laplace-Beltrami operator on $M$.

In \cite[P. 540]{V.Shklover}, Shklover discussed the question whether all boundary components of Schiffer domains $\Omega \subset M$ must be homogeneous hypersurfaces in $M$, which means that they are orbits of subgroups of the group of isometries of $M$. He then answered this question negatively by providing specific examples of Schiffer domains in manifolds $M$ of constant sectional curvature. These Schiffer domains are bounded by hypersurfaces which are isoparametric but not homogeneous (see e.g. \cite[Definition 3]{V.Shklover} for a definition of isoparametric hypersurfaces). By Cartan's theorem \cite{cartan}, the isoparametricity property of these hypersurfaces implies that their principal curvatures are constant. The construction in \cite{V.Shklover} yields, in particular, interesting examples in the case of the unit sphere  $M=\mathbb{S}^{N}$, $ N\geq 3$, which is addressed in detail in \cite[Section 2]{V.Shklover}. Later, Souam \cite{Souam} studied the $2$-sphere $M=S^2$ and proved the following classification result: If $\O \subset S^2$ is a $C^{2,\alpha}$-domain admitting a solution of $(\textrm{N}_{\mu})$ and either
\begin{itemize}
\item[(i)] $\mu=2$ and $\Omega$ is simply connected
\item[(ii)] or $\mu= \lambda_2$, the second Dirichlet eigenvalue of $-\D$ on $\Omega$,   
\end{itemize}
then $\Omega$ is a geodesic disc. He then also formulated the following

\begin{Conjecture} (see \cite[Conjecture 1.1]{Souam})\\
\label{conjecture-souam}  
If $\Omega \subset S^2$ is a sufficiently regular Schiffer domain, then $\Omega$ is either a geodesic
disk or a round symmetric annulus.
\end{Conjecture}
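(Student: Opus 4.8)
Since the abstract announces that this paper \emph{disproves} Conjecture~\ref{conjecture-souam}, the plan is not to prove it but to exhibit a family of counterexamples, obtained by a bifurcation argument that simultaneously produces the announced subdomains of the flat cylinder. The starting point is that both the products $\Omega_r=\overline{B_r^N}\times(\R/2\pi\Z)\subset\R^N\times(\R/2\pi\Z)$ and the round symmetric annuli of $S^2$ (geodesic ``belts'' $\{-a<z<a\}$, symmetric under $z\mapsto-z$) are themselves Schiffer domains: a rotationally symmetric Neumann eigenfunction that is even in the axial variable automatically takes equal constant values on the two boundary components, and the corresponding eigenvalue $\mu=\mu(r)>0$ (resp.\ $\mu(a)>0$) is produced by a Bessel (resp.\ Legendre) type ordinary differential equation. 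These form a one--parameter ``trivial'' branch, off which I would look for a local bifurcation of \emph{nontrivial} Schiffer domains at suitable parameter values $r_*$ (resp.\ $a_*$).

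Concretely, for a profile $\varphi$ on the boundary hypersurface $\Gamma_r=\mathbb S^{N-1}(r)\times(\R/2\pi\Z)$ (resp.\ on the two latitude circles in $S^2$) let $\Omega_\varphi$ be the associated normal--graph domain, and, after pulling everything back to the fixed domain by a $\varphi$--dependent diffeomorphism, let $(u_\varphi,\mu_\varphi)$ be the Neumann eigenpair continuing $(u_r,\mu(r))$; this depends analytically on $\varphi$ as long as the underlying eigenvalue stays simple, which I would arrange at $r_*$. One then sets
\[
F(\varphi,r)=\Bigl(u_\varphi-\tfrac{1}{|\partial\Omega_\varphi|}\int_{\partial\Omega_\varphi}u_\varphi\,d\sigma\Bigr)\Big|_{\partial\Omega_\varphi},
\]
so that $F(0,r)\equiv0$ and $F(\varphi,r)=0$ exactly when $\Omega_\varphi$ is a Schiffer domain (with $\mu=\mu_\varphi$, close to $\mu(r_*)>0$). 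The next step is the linearization $L(r):=D_\varphi F(0,r)$. Since $u_r'(r)=0$, separation of variables into spherical harmonics $Y_\ell$ on $\mathbb S^{N-1}$ and Fourier modes $e^{int}$ diagonalizes $L(r)$, with eigenvalue $\lambda_{\ell,n}(r)$ an explicit ratio of solutions of the cross--section ODE (effective potential $n^2+\ell(\ell+N-2)s^{-2}$) evaluated at $s=r$; likewise on $S^2$ with $\lambda_m(a)$ built from Legendre functions. I would then single out a mode $(\ell_0,n_0)$ with $\ell_0\ge1$ or $n_0\ne0$ for which $\lambda_{\ell_0,n_0}(r)$ has a simple zero at some $r_*$ while no other $\lambda_{\ell,n}(r_*)$ vanishes (forcing this by an appropriate symmetry restriction), and verify the transversality $\tfrac{d}{dr}\lambda_{\ell_0,n_0}(r_*)\ne0$; this non--degeneracy is a quantitative statement about zeros of Bessel/Legendre type functions and is the most delicate computational point. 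A bifurcation theorem then gives, through $(0,r_*)$, a nontrivial branch of Schiffer domains whose boundaries carry the mode $(\ell_0,n_0)$, hence have nonconstant principal curvatures (on $S^2$: annuli with non--round, non--constant--geodesic--curvature boundary circles) and are therefore neither geodesic disks nor round symmetric annuli, contradicting Conjecture~\ref{conjecture-souam}.

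The point where the routine argument breaks down, and where the real work lies, is the regularity of $L(r)$. A short analysis of the cross--section ODE shows that for large $\ell$ or $|n|$ the symbol decays, $\lambda_{\ell,n}(r)\sim c_r\,(\ell+|n|)^{-1}$, so $L(r)$ is in fact a \emph{smoothing operator of order one}: it is compact on every H\"older or Sobolev space on $\Gamma_r$, its range there is not closed, and it is not Fredholm, so the Crandall--Rabinowitz theorem cannot be applied in any standard function space, nor can a plain implicit function theorem be used on a complement of the kernel. The remedy I would develop is to carry out the entire construction in a scale of Banach spaces $\{X_\rho\}_{\rho>0}$ of real--analytic functions on $\Gamma_r$ (for instance with Fourier--spherical--harmonic coefficients decaying like $e^{-\rho(\ell+|n|)}$): on such a scale $F$ is analytic, and $L(r)$, although smoothing, is a bounded \emph{isomorphism with finite loss of analyticity radius}, i.e.\ it is boundedly invertible as a map $X_\rho\to X_{\rho'}$ ($\rho'<\rho$) up to its finite--dimensional kernel and cokernel. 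Establishing this analytic functional--analytic framework --- the analytic dependence of the Neumann eigenpair on $\varphi$ within the scale, the precise mapping properties of $L(r)$ and of the nonlinear remainder between the $X_\rho$, and a Lyapunov--Schmidt/bifurcation theorem on such a scale (in the spirit of the abstract Cauchy--Kovalevskaya theorem, which yields implicit--function statements with a controlled loss via a fixed--point scheme rather than Nash--Moser) --- is the heart of the argument. Once it is in place, the bifurcation step and the geometric conclusions follow as above, and the $S^2$ case is handled by the identical scheme with Legendre functions in place of Bessel functions, yielding the desired counterexamples to Conjecture~\ref{conjecture-souam}.
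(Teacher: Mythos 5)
Your proposal correctly identifies the two essential ingredients — a Crandall--Rabinowitz bifurcation off a one-parameter branch of rotationally symmetric Schiffer domains, and the fact that the linearization suffers a loss of regularity which blocks a naive application of bifurcation theory in standard H\"older or Sobolev spaces — but the remedy you propose for the second point is genuinely different from the one in the paper, and on balance considerably heavier. You diagnose the obstruction on the boundary, as the trace operator $\varphi\mapsto u_\varphi|_{\partial\Omega_\varphi}$ being smoothing of order one, and propose to restore invertibility by moving the whole construction into a scale of Banach spaces of real-analytic functions and running a Nash--Moser/Cauchy--Kovalevskaya-type bifurcation scheme. The paper instead stays in the bulk: after pulling back to the fixed cylinder $\Omega_*=B_1\times\R$, it eliminates the boundary deformation $h$ by substituting $h=h_u$ (a first normal derivative of $u$ on the boundary), and observes that this makes the reduced operator $G_\lambda$ genuinely second order but forces its linearization $L_\lambda$ to be viewed between anisotropic H\"older-type spaces $X_2^D$ (where $u$ and $D_tu$ both have $C^{2,\alpha}$ regularity) and $Y=C^{1,\alpha}_{p,rad}+X_0^D$. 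The key lemma (Lemma 3.5) is an elliptic-regularity bootstrap showing that $L_\lambda:X_2^D\to Y$ is Fredholm of index zero; Crandall--Rabinowitz then applies verbatim, with no analytic scale, no Newton iteration, and no loss of analyticity radius to track. What your route would buy, if pushed through, is a construction entirely within a classical analytic framework and possibly a more systematic handle on other overdetermined problems with the same smoothing obstruction; what the paper's route buys is a much more elementary proof, resting only on Schauder theory and the open mapping theorem, plus an explicit description of the bifurcating kernel in terms of Bessel zeros. Two smaller remarks: (1) The hard part of your scheme — showing that the smoothing linearization admits a right inverse uniformly controlled as $X_\rho\to X_{\rho'}$ on the scale and then running a Lyapunov--Schmidt reduction there — is only sketched, and this is precisely where the routine argument is at risk; the paper's design of $X_2^D$ and $Y$ is a concrete alternative that sidesteps it entirely. (2) You say the $S^2$ case is handled by an identical scheme with Legendre functions in place of Bessel functions; the paper (Remark 1.5) notes it is actually more delicate, since the bifurcation parameter no longer appears as a simple scaling factor of $\partial_{xx}$ and the spectral analysis of the cross-section ODE cannot be done as explicitly.
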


The purpose of the present paper is twofold. In the first part of the paper, we provide the first example of a Schiffer domain in a Riemannian manifold $(M,g)$ of constant sectional curvature whose (connected) boundary has nonconstant principal curvatures, therefore it is neither homogeneous nor isoparametric (again by Cartan's theorem \cite{cartan}). Here we consider the ambient manifold $M$ given as the flat cylinder $\R^N\times \R/2\pi\Z$ endowed with the flat metric. In this case, Schiffer domains correspond to $2\pi$ periodic (in the last component) subdomains $\Omega \subset \R^N \times \R$ for which $(\textrm{N}_{\mu})$ admits a solution for some $\mu>0$. The domains we construct in the present paper will be cylindrical subgraphs close to the straight cylinder $B_1 \times \R$, where, here and in the following, $B_1$ denotes the unit ball in $\R^N$. In the final section of the paper, we use the methods devoloped for the flat cylinder to construct regular Schiffer domains $\Omega \subset S^2$. These domains are given as open neighborhoods of the equator $S^1 \times \{0\}$ in $S^2$ bounded by pairs of curves $x \mapsto  \left(\cos x \cos h(x),\sin x \cos h(x), \sin \bigl( \pm h(x)\bigr) \right)$ with associated nonconstant $2\pi$-periodic functions $h$. Consequently, the existence of these domains disproves Conjecture~\ref{conjecture-souam}.  

To state our first main result, we fix $\alpha \in (0,1)$ and define  by  $C^{2,\alpha}_{p}(\R)$  the space of  $2\pi$ periodic and even $C^{2,\alpha}$-functions on $\R$, and we let $\cP^{2,\alpha}_{p}(\R)$ denote the open subset of strictly positive functions in $C^{2,\alpha}_{p}(\R)$. For a function $h \in \cP^{2,\alpha}_{p}(\R)$, we  define  the domain               
\begin{equation}\label{eq:PertTorus}
\Omega_h:= \left\{\left(t,x \right)\in  \R^N\times \R \::\: |t|<\frac{1}{h(x)}  \right\}.
\end{equation}

\bigskip
Our main result  is the following.  
\begin{Theorem}\label{Theo1-ND}
Let $N,m  \in \N$ be positive integers.  Then there exist  $\e>0$ and (explicit) constants $\mu_0, \kappa >0$, $\beta,\gamma \in \R \setminus \{0\}$, depending only on $N$ and $m$, and a smooth curve
$$
(-{\e},{\e}) \to   (0,+\infty) \times  \cP^{2,\alpha}_{p}(\R) ,\qquad s \mapsto (\mu_s,h_s)
$$
with $\mu_s \big|_{s=0}= \mu_0$, 
$$
h_s(x)= \kappa \sqrt{\mu_s} + s \beta \cos (x) + o(s) \qquad \text{as $s \to 0$ uniformly on $\R$}
$$
and the property that the overdetermined boundary value problem
\begin{equation}\label{eq:solved-main-ND}
  \left \{
    \begin{aligned}
       \D w_s+ {\mu_s} w_s &=  0 && \qquad \text{in $ \Omega_{ h_s}$,}\\
             w_s&=1 &&\qquad \text{on $\partial \Omega_{ h_s}$,}\\
              |\n w_s | &=0 &&\qquad \text{on $\partial  \Omega_{ h_s}$}
    \end{aligned}
       \right.
\end{equation}
admits a classical solution $w_s$ for every $s \in (-\eps,\eps)$ which is radial in $t$ and
$2\pi$-periodic and even in $x$. Moreover, we have
\begin{equation}
  \label{eq:w-s-expansion}
w_s(\frac{t}{h_s(x)},x)= U_m(|t|) +s\bigl\{\phi_1(|t|)+  \gamma \,|t| U_m'(|t|)\bigr\} \cos (x) + o(s)  \quad \text{as $s \to 0$}
\end{equation}
uniformly on $B_1 \times \R$, where $t \mapsto U_m(|t|)$ is the $m$-th nonconstant radial Neumann eigenfunction of the Laplacian on the unit ball $B_1$ of $\R^N$, normalized such that $U_m(1)=1$, and $t \mapsto \phi_1(|t|)$ is a (suitably normalized) positive first Dirichlet eigenfunction of the Laplacian on the unit ball $B_1$.
\end{Theorem}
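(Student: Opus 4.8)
The plan is to set up a bifurcation argument in appropriately chosen Hölder-type spaces, using the branch of trivial solutions coming from the straight cylinder. First I would record that for $h\equiv \kappa\sqrt{\mu}$ the domain $\Omega_h$ is the straight cylinder $B_{1/(\kappa\sqrt\mu)}\times\R$; after rescaling $t\mapsto t/h$ the equation $\Delta w+\mu w=0$ becomes a radial Neumann eigenvalue problem on $B_1$ whose nonconstant radial eigenfunctions are the $U_m(|t|)$, normalized by $U_m(1)=1$. This fixes $\mu_0$ (so that $\mu_0=\kappa^{-2}\nu$ where $\nu$ is the $m$-th nonzero radial Neumann eigenvalue of $B_1$) and gives the trivial family $(\mu,h_\mu)=(\mu,\kappa\sqrt\mu)$ solving the first two equations of \eqref{eq:solved-main-ND} identically, with the overdetermined condition $|\nabla w|=0$ on $\partial\Omega_{h_\mu}$ holding for all $\mu$ by radial symmetry. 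The job is then to bifurcate off this line a nontrivial branch on which $h$ is genuinely nonconstant.

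Next I would transplant the problem to a fixed domain. Writing $w(t,x)=v(t/h(x),x)$, the PDE $\Delta w+\mu w=0$ on $\Omega_h$ together with $w=1$ on $\partial\Omega_h$ becomes a Dirichlet problem for $v$ on the fixed cylinder $B_1\times(\R/2\pi\Z)$ with an $h$-dependent elliptic operator $L_{h}v=0$, $v|_{\partial B_1}=1$, which for $h$ near the constant is uniquely solvable; call the solution $v=v(\mu,h)$. The remaining overdetermined condition $|\nabla w|=0$ on $\partial\Omega_h$ reduces (since $w$ is constant on $\partial\Omega_h$, the tangential derivative vanishes automatically) to a single scalar equation, namely that the normal derivative $\partial_\nu w$ vanish on $\partial\Omega_h$; this defines a map $F(\mu,h):=\bigl(\partial_\nu w(\mu,h)\bigr)\big|_{\partial\Omega_h}$ taking values in a space of $2\pi$-periodic even functions of $x$, with $F(\mu,h_\mu)=0$ for all $\mu$. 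One then wants to apply the Crandall–Rabinowitz theorem to $F$ at a suitable $\mu_0$: compute the linearization $D_hF(\mu_0,h_{\mu_0})$, expand everything in Fourier modes $\cos(kx)$, show that the $k$-th mode operator is invertible for $k\ne 1$ and has a one-dimensional kernel at $k=1$ for the special value $\mu_0$ (this is where the first Dirichlet eigenfunction $\phi_1$ of $B_1$ enters: the mode-$1$ linearized equation is a perturbed eigenvalue problem whose solvability degenerates exactly when $\mu_0$ is tuned so that a Dirichlet eigenvalue of $B_1$ lines up), and verify the transversality (non-degeneracy) condition by differentiating in $\mu$. The coefficients $\beta,\gamma$ and $\kappa$ in the asymptotic expansions \eqref{eq:w-s-expansion} then come out of the explicit form of the kernel element, and smoothness of the branch $s\mapsto(\mu_s,h_s)$ from the smooth (analytic) dependence of $F$ on its arguments; reading off $h_s(x)=\kappa\sqrt{\mu_s}+s\beta\cos x+o(s)$ and the stated expansion of $w_s$ is then bookkeeping once the kernel is identified.

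The main obstacle — and, judging from the abstract, the real content of the paper — is the \emph{loss of regularity}: the natural nonlinear map $F$ sending $h\in C^{2,\alpha}_p$ to $\partial_\nu w|_{\partial\Omega_h}$ does \emph{not} land in $C^{1,\alpha}_p$ with the gain of a derivative one would want, because changing the domain through $h$ feeds $h''$ (and its Hölder modulus) into the boundary data in a way that is not compensated. So a naive implicit-function/Crandall–Rabinowitz setup in the scale $C^{k,\alpha}_p$ fails: the linearized operator, while Fredholm of index zero in the right abstract sense, is not an isomorphism between the "obvious" spaces. The fix is the functional-analytic framework the authors advertise — presumably one works with a pair of spaces adapted to the operator (a Nash–Moser-type scheme, or more likely an exploitation of the specific structure: the bad top-order term in $D_hF$ is a Fourier multiplier, or differs from one by a smoothing operator, so it can be inverted mode-by-mode despite not being elliptic in the classical sense), thereby recovering an isomorphism on each Fourier sector and legitimizing the bifurcation. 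I would therefore expect the proof to spend most of its effort (i) identifying this good pair of function spaces and proving that $F$ is smooth between them and that $D_hF(\mu_0,h_{\mu_0})$ restricted to the complement of the kernel is invertible there, and only then (ii) carrying out the essentially standard Crandall–Rabinowitz step and (iii) extracting the explicit constants and expansions.
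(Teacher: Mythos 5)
Your plan to ``solve the Dirichlet problem for fixed $(\mu,h)$, then impose the Neumann condition as the bifurcation equation'' breaks down at exactly the parameter value you would need for bifurcation. After pulling back to the fixed domain $\Omega_*=B_1\times\R$ and rescaling, the operator at the trivial branch is $L_\lambda=\Delta_t+\lambda\partial_{xx}+j_m^2$, and at $\lambda=\lambda_m=j_m^2-j_{N/2-1,1}^2$ (i.e.\ at $\mu=\mu_0$) the function $v_*(t,x)=\phi_1(|t|)\cos x$ satisfies $L_{\lambda_m}v_*=0$ in $\Omega_*$ and $v_*=0$ on $\partial\Omega_*$. In other words, $\mu_0$ is a Dirichlet resonance: the Dirichlet problem $L_{\lambda_m}v=0$, $v=1$ on $\partial\Omega_*$ is solvable (the radial solution $u_m$) but not uniquely, and the linearized Dirichlet problem one must solve to compute $D_hF$ for a perturbation $\dot h=\cos x$ is precisely in the resonant Fourier mode. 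So the solution map $(\mu,h)\mapsto v(\mu,h)$, and hence your $F(\mu,h)=\partial_\nu w|_{\partial\Omega_h}$, is not well-defined (let alone smooth) on a full neighborhood of $(\mu_0,h_{\mu_0})$, and Crandall--Rabinowitz cannot be applied to it. This is not incidental: the resonance \emph{is} the bifurcation mechanism, so any reduction that needs the Dirichlet problem to be uniquely solvable at $\mu_0$ is self-defeating.

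The paper avoids this by doing the elimination the other way around. It keeps the interior unknown $u$ (with the Dirichlet and Neumann conditions both built into the function space via $X_2^{DN}$) together with the domain variable $h$, and then eliminates $h$ \emph{pointwise} via $h_u(x)=c\,D_tu(e_1,x)$ rather than by solving a PDE; see \eqref{eq:DeffM-ND--2}--\eqref{eq:DeffGl-ND} and Remark~\ref{rem:const-sol}. The resulting $G_\lambda(u)=F_\lambda(M_1u,h_u)$ has $DG_\lambda(0)=L_\lambda$ acting with a Dirichlet boundary condition, so the resonant kernel $v_*$ now supplies the one-dimensional kernel for Crandall--Rabinowitz instead of obstructing the reduction. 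On the loss of regularity: your guess that it is repaired by a Nash--Moser scheme or a Fourier-multiplier structure is also not what happens. Since $h_u$ already costs a $t$-derivative of $u$, the naive map $G_\lambda: C^{3,\alpha}\to C^{0,\alpha}$ has a second-order linearization between spaces three derivatives apart, hence is not Fredholm; the paper's fix is the anisotropic pair $X_2^D$ (functions with both $u$ and $D_tu$ in $C^{2,\alpha}$) and $Y=C^{1,\alpha}_{p,rad}+X_0^D$, together with the elliptic-regularity Lemma~\ref{regularity-lemma-1-ND} showing that the radial derivative $D_tu$ automatically gains a derivative from the equation. That is a genuinely different (and more elementary) mechanism than the ones you suggest.
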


Some remarks are in order. 
\begin{Remark}\label{eq:RKmain-ND}
  {\rm
(i) As observed by  Kinderlehrer  and  Nirenberg \cite{KinderlehrerNirenberg}, the domains in  Theorem \ref{Theo1-ND} are   analytic.   It is clear from (\ref{eq:w-s-expansion}) that the solution  $w_s$  changes sign on  $ \Omega_{ h_s}$. More precisely, it can be shown that $w_s$ has precisely $m$ nodal domains in $\Omega_{h_s}$ for $s$ small,
since $U_m$ has $m-1$ nondegenerate zeros in $(0,1)$ and the expansion (\ref{eq:w-s-expansion}) even holds in $C^1$-sense, see Section~\ref{sec:proof-theor-refth} below for details.\\
     (ii) To write the constants $\mu_0, \kappa,\beta,\gamma$ and the functions $U_m, \phi_1$ in Theorem~\ref{Theo1-ND} explicitly, we need to fix some notation regarding Bessel functions. Let $J_{\nu}$ denote the Bessel function of the first kind of order $\nu>-1$, and let
$$
0< j_{\nu,1} < j_{\nu,2} < j_{\nu,3} < \dots
$$
denote the ordered sequence of zeros of $J_\nu$. We also put
$$
I_\nu(r):= r^{-\nu} J_\nu(r) \qquad \text{for $\nu>-1, r>0$.}
$$
Then the function $U_m$ in Theorem~\ref{Theo1-ND} is given by 
\begin{equation*}
r \mapsto U_m(r)=\dfrac{I_{N/2-1}(j_{N/2,m}\, r)}{I_{N/2-1}(j_{N/2,m})}
\end{equation*}
and the function $\phi_1$ is given by $r \mapsto \phi_1(r)= I_{N/2-1}(j_{N/2-1,1}\, r)$.
Moreover, we have $\mu_0 = \frac{j_{N/2,m}^2}{j_{N/2,m}^2-j_{N/2-1,1}^2}$, $\kappa = \frac{1}{j_{N/2,m}}$, $\beta = -\frac{j_{N/2-1,1}^2 I_{N/2-1}(j_{N/2,m}) I_{N/2}(j_{N/2-1,1}) }{j_{N/2,m}^2 I_{N/2-1}''(j_{N/2,m})\sqrt{j_{N/2,m}^2-j_{N/2-1,1}^2 }}$ and
$\gamma= -\frac{j_{N/2-1,1}^2 I_{N/2-1}(j_{N/2,m})  I_{N/2}(j_{N/2-1,1}) }{j_{N/2,m}^2 I_{N/2-1}''(j_{N/2,m})}.
$
For details, see the proof of Theorem~\ref{Theo1-ND} in Section~\ref{sec:proof-theor-refth} below.
}
\end{Remark}\vspace{0.5ex}

\begin{Remark}\label{eq:RKmain-ND-1}
{\rm   
The case $N=1$ in Theorem~\ref{Theo1-ND} is of particular interest. In this case, the $m$-th nonconstant radial (i.e., even) Neumann eigenfunction $t \mapsto U_m(|t|)$ of the Laplacian on the unit ball $B_1=(-1,1) \subset \R$ with $U_m(1)=1$ is given by $t \mapsto U_m(|t|)= (-1)^m \cos \bigl( m \pi |t|\bigr)$. One might ask if in this case a similar bifurcation result can be proved with $U_m$ replaced by an odd eigenfunction of the type $t \mapsto \sin \bigl((m-\frac{1}{2})\pi t\bigr)$. In this case, the corresponding overdetermined problem needs to be 
\begin{equation}
  \label{eq:overdetermined-squared}
  \left\{
    \begin{aligned}        \D u +\mu  u &=  0 && \qquad \text{in $\Omega$,}\\
      u^2 &=const&&\qquad \text{on $\partial \Omega$,}\\
      |\n u| &=0 &&\qquad \text{on $\partial \Omega$,}        
    \end{aligned}
       \right.
     \end{equation}
taking into account that the straight cylinder has two boundary components in the case $N=1$ and odd functions in the $t$-variable are only equal up to sign on these boundary components. In fact, it follows from the results of the first and the third author in \cite{Fall-Weth} that such a bifurcation does not occur in the case $m=1$. More precisely, in \cite{Fall-Weth} problem~(\ref{eq:overdetermined-squared}) is considered for domains $\Omega \subset \R \times \cN$ with a closed Riemannian manifold $\cN$ and with $\mu=\mu_2(\Omega)$ (the first nontrivial Neumann eigenvalue of $-\Delta$ on $\Omega$). Moreover, it is shown that in this case (\ref{eq:overdetermined-squared}) admits a solution if and only if $\Omega = (-r,r) \times \cN$ for some $r \ge r_\cN>0$, and then the solution is given as a scalar multiple of the function $(t,x) \mapsto \sin(\frac{\pi t}{2r})$. This rigidity result applies in particular in the case $\cN = S^1= \R/2\pi\Z$ and excludes an analogue of Theorem~\ref{Theo1-ND} where the function $t \mapsto \sin \bigl(\frac{\pi}{2} t\bigr)$ would appear in place of $t \mapsto U_m(|t|)$ in the expansion~(\ref{eq:w-s-expansion}).}
\end{Remark}\vspace{0.5ex}

The main strategy of the proof of Theorem~\ref{Theo1-ND} is to transform our problem to a parameter dependent nonlinear operator equation in Banach spaces which can be solved by means of the Crandall-Rabinowitz bifurcation theorem \cite{M.CR}. We now comment on the main steps and difficulties of the proof and outline the organization of this paper.

In Section  \ref{eq.setting}, we first rephrase  the main problem  \eqref{eq:perturbed-strip-ND} to an equivalent problem  \eqref{eq:Proe1-ss2-ND} on the fixed domain $\O_*:=  B_1\times \R$.  This leads  us  to considering an equation of the function $F_\lambda(u_m + u,1+h)=0$ with unknown functions $u \in C^{2,\alpha}_{p,rad}(\overline{\O_*})$ and $h \in C^{2,\alpha}_p(\R)$ for some $\alpha \in (0,1)$. Here $u_m$ is related to $U_m$ in Theorem~\ref{Theo1-ND} by $u_m(t,x) = U_m(|t|)$, and $C^{2,\alpha}_{p,rad}(\overline{\O_*})$ denotes the space of $C^{2,\alpha}$-functions $u=u(t,x)$ which are radial in $t$ and $2\pi$ periodic and even in $x$.  However, the linearization of $F_\lambda$ at $(0,0)$ turns out to be not of Fredholm type between classical Hölder spaces and comes with an apparent loss of derivatives.  With the help of a suitable transformation, we overcome this problem by eliminating the variable $h$. This allows us to reduce the problem to an equation of the type $G_\lambda(u)=0$ for some function $(\lambda,u) \mapsto G_\lambda(u)$. However, in this elimination procedure, we are lead to substitute a function $h = h_u$ depending on derivatives of $u$, see Remark~\ref{rem:const-sol} below. Therefore we have to leave the framework of standard Hölder spaces and have to consider both $F$ and $G_\lambda$ as maps between (open subsets of) new tailor made Banach spaces $X_2^D$ and $Y$, see Section~\ref{sec:functional-setting} below. We believe that this strategy might have further applications in this context to overcome an apparent loss of derivatives.
 
In Section~\ref{sec:functional-setting}, we then compute the linearised operator  $D_uG_\lambda(0): X_2^D \to Y$ and show that it is a Fredholm operator of index zero.

Then, in Section~\ref{eq:kernel}, we determine a parameter value  $\l_{m}$  for which  $D_uG_{\l_{m}}(0):X_2^D\to Y$  has a one  dimensional kernel and the transversality condition  in the Crandall-Rabinowitz bifurcation theorem \cite{M.CR} holds.

In Section~\ref{eq:ProofTheo1-ND}, we then complete the proof of Theorem~\ref{Theo1-ND}.

Finally, in Section \ref{sec:schiffer-problem-s2}, we study problem $(\textrm{N}_{\mu})$ on the $2$-sphere $S^2$ and prove Theorem~\ref{Theo1-ND-S2} below, which provides the existence of Schiffer domains given as open neighborhoods of the equator $S^1 \times \{0\}$ in $S^2$. While the functional analytic framework for this problem is very similar to the one developed for the flat cylinder, the analysis differs considerably, see Remark~\ref{eq:RK-sphere} below.\\

To state our main result on the round $2$-sphere $S^2$, we fix $\alpha \in (0,1)$ and define 
\begin{equation}
  \label{eq:def-p-s-2}
  \cP^{2,\alpha}_{S^2}(\R):= \bigl\{h \in C^{2,\alpha}_{p}(\R)\::\: 0< h < \frac{\pi}{2} \quad \text{on $\R$} \bigr\}. 
\end{equation}
and, for $h \in \cP^{2,\alpha}_{S^2}(\R)$, the subdomain
\begin{equation}\label{eq:Pertsphere-1}
\widetilde \Omega_h:= \left\{\left(\cos x \cos \bigl(h(x)t\bigr),\sin x \cos \bigl(h(x)t\bigr), \sin \bigl(h(x)t\bigr) \right)\in S^2 \::\: |t|<1, \, x \in \R  \right\}
\end{equation}
of $S^2$ which is an open neighborhood the equator $S^1 \times \{0\} \subset S^2$ bounded by the two curves $x \mapsto  \left(\cos x \cos h(x),\sin x \cos h(x), \sin \bigl( \pm h(x)\bigr) \right)$.

\begin{Theorem}\label{Theo1-ND-S2}
For every $\lambda_0 \in (0,1)$ there exists $\ell_0= \ell_0(\lambda_0) \in \N$ with the property that for every $\ell \in \N$ with $\ell \ge \ell_0$
there exists $\e>0$ and a smooth curve
$$
(-{\e},{\e}) \to   (0,+\infty) \times (0,\lambda_0) \times \cP^{2,\alpha}_{S^2}(\R) ,\qquad s \mapsto (\mu_s,\xi_s, h_s)
$$
with 
\begin{equation}
  \label{eq:h-s-expansion}
h_s(x)= \xi_s + s \cos (\ell x) + o(s) \qquad \text{as $s \to 0$ uniformly on $\R$}
\end{equation}
and the property that the overdetermined boundary value problem
\begin{equation}\label{h-Neu-over-S2}
  \left \{
    \begin{aligned}
\D w+ {\mu} w &=  0 && \qquad \text{in $  \widetilde\Omega_{h}$,}\\
             w&=1 &&\qquad \text{on $\partial  \widetilde \Omega_{h}$,}\\
              |\n w|  &=0 &&\qquad \text{on $  \partial \widetilde \Omega_{h}$}
    \end{aligned}
       \right.
\end{equation}
with $\mu = \mu_s$, $h=h_s$ admits, for every $s \in (-\eps,\eps)$, a classical solution. Here, $\Delta$ denotes the Laplace-Beltrami operator on $S^2$ with respect to the standard round metric.
\end{Theorem}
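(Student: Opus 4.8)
The plan is to follow the scheme of the proof of Theorem~\ref{Theo1-ND}, replacing the straight cylinder by the spherical zone and the Bessel functions by Legendre functions. First I would fix the diffeomorphism
\[
\Phi_h\colon \O_*:=[-1,1]\times\R/2\pi\Z \;\to\; \widetilde\Omega_h,\qquad \Phi_h(t,x)=\bigl(\cos x\cos(h(x)t),\,\sin x\cos(h(x)t),\,\sin(h(x)t)\bigr),
\]
and pull \eqref{h-Neu-over-S2} back to the fixed domain $\O_*$: with $v:=w\circ\Phi_h$, the equation $\D w+\mu w=0$ becomes a quasilinear second order elliptic equation for $v$ whose coefficients depend on $h,h',h''$, the condition $w\equiv1$ on $\partial\widetilde\Omega_h$ becomes $v\equiv1$ on $\{|t|=1\}$, and $|\n w|=0$ on $\partial\widetilde\Omega_h$ becomes a first order boundary condition on $\{|t|=1\}$. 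Exactly as in Section~\ref{eq.setting}, the naive linearization of the resulting map $(v,h)\mapsto F_\mu(v,h)$ loses derivatives, which one removes by solving part of the system for $h=h_v$ (an expression in the derivatives of $v$), thereby leaving the scale of classical H\"older spaces and recasting the problem as $G_\mu(v)=0$ between open subsets of the tailored Banach spaces $X_2^D$ and $Y$ of Section~\ref{sec:functional-setting}, adapted to the present geometry. The trivial branch consists of the rotationally symmetric zones $\widetilde\Omega_\xi$ ($h\equiv\xi$): in coordinates in which $\phi:=h(x)t$ is the signed distance from the equator, a radial solution $w=u(\phi)$ of $\D w+\mu w=0$ solves the Legendre-type equation $u''-\tan\phi\,u'+\mu u=0$; normalizing the even solution by $u(0)=1$, the Dirichlet condition becomes automatic after rescaling and the Neumann condition holds precisely when $\xi=\xi(\mu)$ is the first interior critical point of $u$. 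In contrast to the cylinder, where scaling invariance lets one normalize the base domain to $B_1\times\R$, the base zone $\widetilde\Omega_{\xi(\mu)}$ here genuinely varies with $\mu$; I would take the resulting smooth one-parameter family of radial solutions as the trivial branch of the bifurcation and apply the Crandall--Rabinowitz theorem after subtracting it off.

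Next I would compute $D_vG_\mu$ along the trivial branch. As in Sections~\ref{sec:functional-setting}--\ref{eq:kernel}, this operator is Fredholm of index zero, and expanding in the $x$-Fourier modes $\cos(\ell x)$ reduces the kernel equation in the mode $\ell$ to the ODE
\[
\psi''-\tan\phi\,\psi'+\Bigl(\mu-\tfrac{\ell^2}{\cos^2\phi}\Bigr)\psi=0 \ \text{ on }(-\xi(\mu),\xi(\mu)),\qquad \psi\bigl(\pm\xi(\mu)\bigr)=0,
\]
where the homogeneous Dirichlet condition appears because $u'(\xi(\mu))=0$ annihilates the contribution of the moving boundary to the linearized Dirichlet condition, while the linearized Neumann condition then merely fixes $\psi'(\pm\xi(\mu))$ and is automatically satisfied by any nontrivial even solution. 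Hence $D_vG_\mu$ has a nontrivial kernel in the mode $\ell$ exactly when $\mu$ is an even Dirichlet eigenvalue, on $(-\xi(\mu),\xi(\mu))$, of the operator $\mathcal L_\ell:=-\partial_\phi^2+\tan\phi\,\partial_\phi+\ell^2\sec^2\phi$. Together with the defining relation $\xi=\xi(\mu)$ of the trivial branch (which exhibits $\mu$ as a nonconstant even Neumann eigenvalue of $\mathcal L_0$ on $(-\xi,\xi)$), this is a system of two scalar equations for the pair $(\mu,\xi)$.

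The crux of the argument is to solve this system, for $\ell$ large, with $\xi\in(0,\lambda_0)$; this is where the analysis genuinely departs from the cylinder case, which was scaling-invariant and entirely explicit. Since the width $\xi$ of the zone and the eigenvalue $\mu$ are now intrinsically coupled, I would argue asymptotically. Along the trivial branch one has $\xi(\mu)\to0$ with $\xi(\mu)\sim\pi/\sqrt\mu$ as $\mu\to\infty$ (the first interior critical point of the even Legendre function degenerates to that of $\cos(\sqrt\mu\,\phi)$), and on the shrinking interval $(-\xi,\xi)$ one has $\tan\phi\to0$, $\sec^2\phi\to1$ uniformly, so that after the rescaling $\phi=\xi\sigma$ the operators $\xi^2\mathcal L_0$ and $\xi^2\mathcal L_\ell$ converge on $(-1,1)$ to $-\partial_\sigma^2$ and $-\partial_\sigma^2+\ell^2\xi^2$. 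Matching the first nonconstant even Neumann eigenvalue of the former with an even Dirichlet eigenvalue of the latter yields a relation of the form $\ell^2\sim\tfrac34\mu$, hence a solution with $\mu_0\sim\tfrac43\ell^2$ and $\xi_0\sim\tfrac{\sqrt3\,\pi}{2\ell}$, which lies in $(0,\lambda_0)$ as soon as $\ell\ge\ell_0(\lambda_0)$. Turning this into an exact statement — controlling the Legendre functions and their critical points uniformly in the thin-zone limit, and then, by an intermediate-value argument along the trivial branch combined with a nondegeneracy/monotonicity estimate, producing an exact solution $(\mu_0,\xi_0)$ close to the asymptotic one at which the kernel of $D_vG_{\mu_0}$ is one-dimensional (no other Fourier mode resonating at $(\mu_0,\xi_0)$) — is the main obstacle of the proof.

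Finally, with $(\mu_0,\xi_0)$ in hand I would check the remaining hypotheses of the Crandall--Rabinowitz theorem \cite{M.CR}: one-dimensionality of $\ker D_vG_{\mu_0}=\spann\{\psi_0\cos(\ell x)\}$, codimension one of its range, and the transversality condition that $D_\mu D_vG_{\mu_0}\bigl[\psi_0\cos(\ell x)\bigr]$ does not lie in the range of $D_vG_{\mu_0}$ (again verified in the asymptotic regime). Crandall--Rabinowitz then yields a smooth curve $s\mapsto(\mu_s,v_s)$ of solutions of $G_\mu(v)=0$ bifurcating from the trivial branch at $\mu_0$, with $v_s$ no longer radial; setting $h_s:=h_{v_s}$ and $\xi_s:=\tfrac1{2\pi}\int_0^{2\pi}h_s(x)\,dx$ gives $h_s(x)=\xi_s+s\cos(\ell x)+o(s)$ with $\xi_s\to\xi_0$ and $\mu_s\to\mu_0$. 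Undoing the transformation, $w_s:=v_s\circ\Phi_{h_s}^{-1}$ solves \eqref{h-Neu-over-S2} on $\widetilde\Omega_{h_s}$; elliptic regularity together with the analyticity argument of Kinderlehrer--Nirenberg \cite{KinderlehrerNirenberg} upgrades it to a classical solution, and since $\xi_0\in(0,\lambda_0)\subset(0,\tfrac\pi2)$ we have $h_s\in\cP^{2,\alpha}_{S^2}(\R)$ for $|s|$ small, which completes the proof.
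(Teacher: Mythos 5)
Your scheme — pull back to the fixed strip, eliminate $h$ to obtain $G_\mu(v)=0$ between the tailored spaces, reduce the kernel equation per Fourier mode to a $1$D Dirichlet problem, and invoke Crandall--Rabinowitz — is exactly the structure of the paper's proof, and the heuristic asymptotics you obtain ($\xi_0\sim\frac{\sqrt{3}\pi}{2\ell}$, $\mu_0\sim\frac{4}{3}\ell^2$) match the root found there. However, the step you yourself flag as ``the main obstacle'' — producing an exact $(\mu_0,\xi_0)$ and proving that the kernel is one-dimensional and that the transversality condition holds — is precisely the nontrivial content of the paper's argument, and it is not carried out in your proposal, only sketched in the thin-zone limit.

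The paper closes this gap without any asymptotic matching. Writing $\sigma_\ell(\lambda)$ for the first eigenvalue of the Dirichlet problem in mode $\ell$ on the fixed interval $(0,1)$ (after pulling the eigenvalue $\mu(\lambda)/\lambda^2$ into the operator by the rescaling $L_\lambda^h=\lambda^2\,\cL^{\lambda/(1+h)}_{\mu(\lambda)/\lambda^2}$, so that the trivial branch is simply $h\equiv 0$, $u\equiv 0$), one has the exact value $\sigma_\ell(0)=-\tfrac{3}{4}\pi^2<0$, the variational lower bound $\sigma_\ell(\lambda)\ge \sigma_0(\lambda)-\mu(\lambda)+\ell^2\lambda^2$, and the pointwise monotonicity estimate $\sigma_\ell'(\lambda)\ge(2\ell^2-C)\lambda$ obtained by differentiating the eigenvalue equation and exploiting the monotonicity of $\rho(\tau)/\cos\tau$. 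For $\ell$ large this forces a unique $\lambda_*\in(0,\lambda_0)$ with $\sigma_\ell(\lambda_*)=0$ and $\sigma_\ell'(\lambda_*)>0$, which simultaneously produces the bifurcation point and the transversality condition, and the lower bound shows no other mode $k\ne\ell$ can resonate at $\lambda_*$ (the paper's Proposition~\ref{propCR-ND-S2}(i) finishes the one-dimensionality argument via orthogonality to $V_{\lambda_*}$ and a Courant--Fischer comparison of second eigenvalues). So your ``intermediate-value argument combined with a nondegeneracy/monotonicity estimate'' is indeed the right idea, but the proposal never derives the monotonicity estimate that makes the argument go through on the whole parameter interval rather than just asymptotically; and the one-dimensionality of the kernel, which you assert as ``no other Fourier mode resonating,'' likewise requires the comparison argument that the paper supplies. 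As it stands, these are genuine gaps in the proof.
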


\begin{Remark}\label{eq:RK-sphere}
{\rm   (i) As noted before, Theorem~\ref{Theo1-ND-S2} disproves Conjecture~\ref{conjecture-souam} since, by (\ref{eq:h-s-expansion}), the functions $h_s$ are nonconstant for $s$ nonzero but close to zero.\\
  (ii) While the functional analytic framework of the proof of Theorem~\ref{Theo1-ND-S2} is very similar to the one of the proof of Theorem~\ref{Theo1-ND}, there are some differences which require additional work. First, we cannot rescale problem (\ref{h-Neu-over-S2}) in the same way as we do in the beginning of Section~\ref{sec:preliminaries} for the case of the ambient flat cylinder, where, after rescaling, the bifurcation parameter appears simply as a factor of the $\partial_{xx}$-term. Moreover, while there still exists a smooth family of one-dimensional solutions of (\ref{h-Neu-over-S2}) in the case $h \equiv \lambda \in (0,\frac{\pi}{2})$ depending only on the distance to the equator $S^1 \times \{0\} \subset S^2$, this family and the associated family of Dirichlet eigenfunctions are more difficult to analyze than the family of Bessel functions.} 
\end{Remark}\vspace{0.5ex}

We close this introduction by discussing some related work on overdetermined Dirichlet problems. In particular, we mention closely related work by Sicbaldi \cite{P.Sicbaldi} as well as Sicbaldi and Schlenk \cite{Sicbaldi-Schlenk} on the overdetermined Dirichlet eigenvalue problem 
\begin{align*}
(\textrm{D}_{\mu}):   
  \left \{
    \begin{aligned}
        \D u +\mu  u &=  0 && \qquad \text{in $\Omega$,}\\
              u &=0 &&\qquad \text{on $\partial \Omega$}\\
               |\n u| &=b\ne 0  &&\qquad \text{on $\partial \Omega$,}
    \end{aligned}
       \right.
\end{align*}
In these works, the authors also used the Crandall-Rabinowitz bifurcation theorem to construct periodic domains bifurcating from straight cylinders which admit solutions of $(\textrm{D}_{\mu})$ for $\mu = \lambda_1(\Omega)$ where $\lambda_1(\Omega)$ is the first Dirichlet eigenvalue of $\Omega$, considered as a subdomain of $\R^N\times \R/T\Z$. In the case $N \ge 3$, these domains provide counterexamples to a conjecture of Berestycki, Caffarelli and Nirenberg  \cite[p. 1110]{BeCahNi}  claiming,  if  $\O$  is a  smooth bounded  domain such that such that $\R^{N+1}\setminus \ov \O$ is connected and there exists a solution  to  
\begin{align}\label{eq:BCN}   
  \left \{
    \begin{aligned}
        \D u + f(u) &=  0 && \qquad \text{in $\Omega$,}\\
              u &=0 &&\qquad \text{on $\partial \Omega$}\\
               |\n u| &=const.  &&\qquad \text{on $\partial \Omega$,}
    \end{aligned}
       \right.
\end{align}
for some Lipschitz function $f$, then $\O$  should be a half-space, a ball, the complement of a ball, or a circular-cylinder-type domain $\R^j\times C$ (up to rotation and translation), where $C$ is a ball or a complement of a ball in $\R^{N-j}.$  For  further results regarding \eqref{eq:BCN} including general ambient  spaces,  we refer the reader to the references \cite{Alessandrini, Gazzola, Greco,   Morabito Sicbaldi, Philippin,  GarofaloLewis, Fall-MinlendI-Weth, PaynePhilippin, Lamboley, FragalaGazzola, BrockHenrot, Reichel, PhilippinPayne, BerchioGazzolaWeth, Ku-Pra, FraGazzolaKawohl, farina-valdinoci,farina-valdinoci:2010-1,farina-valdinoci:2010-2,farina-valdinoci:2013-1, farina-valdinoci:2013-2, Fall-MinlendI-Weth2, Fall-Minlend}.

We point out two key differences between the existing works on $(\textrm{D}_{\mu})$, (\ref{eq:BCN}) and those on  $(\textrm{N}_{\mu})$. First, there is no loss of derivatives present in the constructions related to $(\textrm{D}_{\mu})$ and (\ref{eq:BCN}). In contrast, we have to deal with it in the present paper, and it also appears in other recent works on overdetermined problems with homogeneous Neumann boundary conditions, see e.g. \cite{cui-arxiv} and the references therein. Second, while $(\textrm{N}_{\mu})$ does not possess non-constant positive solutions, most of the work on $(\textrm{D}_{\mu})$ and (\ref{eq:BCN}) deals with positive solutions $u$ and uses related uniqueness and nondegeneracy properties. We are only aware of the following two very recent works on the overdetermined Dirichlet problem for sign changing solutions. In  \cite{I.A.Minlend2}, the second author addressed the existence of sign-changing solution to a variant of the Dirichlet eigenvalue  problem  $(\textrm{D}_\mu)$ in unbounded periodic domains. Moreover,
in the very interesting recent preprint \cite{Ruiz-arxiv}, the nonlinear problem (\ref{eq:BCN}) is considered with a specific nonlinearity which gives rise to the existence of sign changing solutions in perturbations of the unit ball.\\

\noindent \textbf{Acknowledgements}: 
  I.A. Minlend is supported by the Alexander von Humboldt foundation.
This work was  carried  out when  I.A. Minlend was visiting the Goethe University Frankfurt am Main.  He is grateful to the Institute of Mathematics for its hospitality.

\section{The pull back problem}\label{eq.setting} 

\subsection{Preliminaries}
\label{sec:preliminaries}
Recall that we are looking for $\mu>0$ and a nonconstant function $h \in \cP^{2,\alpha}_{p}(\R)$ with the property that the overdetermined problem
\begin{equation}\label{h-Neu-over}
  \left \{
    \begin{aligned}
       \D w+ {\mu} w &=  0 && \qquad \text{in $ \Omega_{h}$,}\\
             w&=1 &&\qquad \text{on $\partial \Omega_{h}$,}\\
              |\n w | &=0 &&\qquad \text{on $\partial  \Omega_{h}$}
    \end{aligned}
       \right.
\end{equation}
admits a solution. This problem is equivalent to finding $\mu>0$ and nonconstant $h \in \cP^{2,\alpha}_{p}(\R)$ with the property that 
\begin{equation}
  \label{eq:perturbed-strip-ND-0}
  \left \{
    \begin{aligned}
  L_{\lambda,\mu} u &=  0 && \qquad \text{in $\Omega_h$,}\\
           u &=1 &&\qquad \text{on $\partial \Omega_h$,}\\
             |\n u| &=0 &&\qquad \text{on $\partial \Omega_h$.}
    \end{aligned}
       \right.
     \end{equation}
     admits a solution for some $\lambda>0$, where the operator $L_{\lambda,\mu}$ is given by
     $$
     L_{\lambda,\mu}:= 
     \D_{\t} + \lambda \partial_{xx}  +  \mu \id.
     $$
     Indeed, a function $u \in C^2(\Omega_h)$ is a solution of  \eqref{eq:perturbed-strip-ND-0} if and only if the function 
\begin{equation}
  \label{eq:changsoluD}
w^{\lambda}\in C^2(\Omega_h),\qquad   w^{\lambda} (t, x):= u( t/\sqrt{\lambda}, x)
\end{equation}
solves (\ref{h-Neu-over}) with $h$ replaced by $\frac{h}{\sqrt{\lambda}}$ and $\mu$ replaced by $\frac{\mu}{\lambda}$.
We first consider the special case $h \equiv 1$ in \eqref{eq:perturbed-strip-ND-0}, i.e., the case of the straight cylinder $\Omega_h = \Omega_1 = B_1 \times \R$.

In this case, for every $\lambda>0$, a solution of \eqref{eq:perturbed-strip-ND-0}
is given by $u(t,x)= U(|t|)$ if $U \in C^2([0,1])$ solves the (overdetermined) ODE eigenvalue problem
\begin{equation}
  \label{eq:ODE-eigenvalue}
U'' + \frac{N-1}{r}U'  +\mu U = 0 \quad \text{in $(0,1)$,}\qquad U'(0)=U'(1)=0,\quad U(1)=1.
\end{equation}
Let, here and in the following, $J_{\nu}$ denote the Bessel function of the first kind of order $\nu>-1$, and let
$$
0< j_{\nu,1} < j_{\nu,2} < j_{\nu,3} < \dots
$$
denote the ordered sequence of zeros of $J_\nu$. Moreover, let
$$
I_\nu(r):= r^{-\nu} J_\nu(r) \qquad \text{for $\nu>-1, r>0$.}
$$
For the sake of brevity, we also set
$$
j_n:= j_{\frac{N}{2},n} \qquad \text{for $n \geq 1$.}
$$
It is well known that (\ref{eq:ODE-eigenvalue}) admits a (unique) nonconstant solution if and only if 
\begin{equation}
  \label{eq:bessel-zeros}
\mu= j_{n}^2  \qquad \text{for some $n \geq 1$,}
\end{equation}
and in this case the solution is given by
\begin{equation}
  \label{eq:def-tilde-un}
r \mapsto U_n(r)=\dfrac{I_{N/2-1}(j_nr)}{I_{N/2-1}(j_n)}.
\end{equation}
Note in particular that
$$
U_n'(1)=   \dfrac{j_n I_{N/2-1}'(j_n)}{I_{N/2-1}(j_n)}=  -  \frac{j_n^2 I_{N/2}(j_n)}{I_{N/2-1}(j_n)}=0 \quad \text{for $n\geq 1$}
$$
since
 \begin{equation}\label{eq:derrec}
  I_\nu'(r) =-r I_{\nu+1}(r) \qquad \text{for $\nu>-1, r>0$.} 
 \end{equation}
 We now fix $m  \geq 1$, put
 \begin{equation}
\label{eq:opLlam}
L_{\lambda}:= L_{\lambda,j_m^2}= \D_{\t} + \lambda \partial_{xx} +  j_m^2 \id     
 \end{equation}
 and consider the $\lambda$-dependent overdetermined problem  
\begin{equation}
  \label{eq:perturbed-strip-ND}
  \left \{
    \begin{aligned}
  L_{\lambda} u &=  0 && \qquad \text{in $\Omega_h$,}\\
           u &=1 &&\qquad \text{on $\partial \Omega_h$,}\\
             |\n u| &=0 &&\qquad \text{on $\partial \Omega_h$.}
    \end{aligned}
       \right.
     \end{equation}
 It is convenient to transform (\ref{eq:perturbed-strip-ND}) to an equivalent problem on the fixed domain
 $$
 \Omega_*:= \O_1=  B_1 \times \R.
 $$
 Observe that, for a function $h \in \cP^{2,\alpha}_{p}(\R)$, the domain $\Omega_h$ is parameterized by the mapping
 $$ \Psi_h: \Omega_*  \to  \Omega_h , \quad  ( t,x)  \mapsto (\tau, x)=( \frac{\t}{h(x)},x),$$
with inverse
$$
\Psi^{-1}_h:  \Omega_h   \to  \Omega_*, \quad  ( \tau ,x)  \mapsto   ( h(x) \tau ,x).
$$
Hence \eqref{eq:perturbed-strip-ND} is equivalent to 
\begin{equation}
  \label{eq:perturbed-strip-ND-fixed-domain}
  \left \{
    \begin{aligned}
  L_{\lambda}^h u &=  0 && \qquad \text{in $\Omega_*$,}\\
           u &=1 &&\qquad \text{on $\partial \Omega_*$,}\\
             |\n u| &=0 &&\qquad \text{on $\partial \Omega_*$,}
    \end{aligned}
       \right.
     \end{equation}
     where the operator
     \begin{equation}
     \label{eqreladiffopets-ND}
     L_{\lambda}^h: C^{2}(\ov{\Omega_*}) \to C^{0}(\ov{\Omega_*})\quad \text{is defined by}\quad 
     L_{\lambda}^h u= \bigl(L_\lambda (u \circ \Psi_h^{-1})\bigr)\circ \Psi_h.
     \end{equation}
     Indeed, $u \in C^{2}(\ov{\Omega_*})$ solves (\ref{eq:perturbed-strip-ND-fixed-domain}) if and only if $u \circ \Psi_h^{-1}$ solves (\ref{eq:perturbed-strip-ND}). To calculate an explicit expression for $L_{\lambda}^h$, we fix $u\in C^{2}(\ov{\Omega_*})$ and note that 
\begin{align}\label{eqreladiffopets-ND-1}
[L^h_\lambda u] (h(x)t,x) = [L_\lambda v_h](t,x) \quad \text{for $(t,x) \in \Omega_h$}
\end{align}
with
\begin{align}
  \label{eqreladiffopets-ND-2}
v_h \in C^2(\ov{\Omega_h}),\qquad v_h(t,x)=u(h(x)t,x).
\end{align}
A direct computation yields 
\begin{align*}
L_\lambda^h v_h(\t,x) =&j_m^2 u(h(x)t,x) + \lambda \partial_{xx}u(h(x)t,x) +   h^2(x)\D_\t u(h(x)t,x)   \\
                      &+ \lambda h'(x)^2  \nabla^2_t u(h(x) t,x)[\t,\t] + 2 \lambda h'(x) \nabla_t \partial_{x}u(h(x) t,x) \cdot t\\
  &+  \lambda h''(x) \nabla_t u(h(x) t,x) \cdot t \qquad  \text{for $(\t,x) \in \Omega_h$.}
\end{align*}
Replacing $t$ by $\frac{t}{h(x)}$ therefore gives   
\begin{align} 
L_\lambda^h u(\t,x) =&j_m^2 u(\t,x) + \lambda \partial_{xx}u(\t,x) +   h^2(x) \D_\t u(\t,x)+ \lambda     \frac{h'(x)^2}{h(x)^2} \nabla^2_t u(\t,x)[\t,\t]   \nonumber\\
&+ 2 \lambda  \frac{ h'(x) }{h(x)}   {D}_{t} \partial_{x}u(\t,x) +  \lambda \frac{ h''(x) }{h(x)} {D}_t u(\t,x) \qquad  \text{for $(\t,x) \in \Omega_*$.} \label{eq:reldiffope-ND}
\end{align}
Here $\nabla_t$ and $\Delta_t$ denote the gradient and Laplacian with respect to the variable $\t \in \R^N$, and we have set 
\begin{equation}
  \label{eq:def-dt-u}
[{D}_\t v](t,x)= \nabla_\t v(t,x) \cdot \t \qquad \text{for functions $v \in C^1(\ov{\Omega_*})$.}
\end{equation}
We also note that 
\begin{equation}
\label{nabla-delta-comp-1}  
[{D}_\t {D}_\t v](t,x)={D}_t v(t,x) 
+ \nabla^2_t v(t,x)[\t,\t] \qquad \text{for $v \in C^2(\ov{\Omega_*})$ and $(t,x) \in \Omega_*$.} 
\end{equation}
Hence we can rewrite \eqref{eq:reldiffope-ND} shortly as 
\begin{align} 
L_\lambda^h u &=j_m^2 u + \lambda \partial_{xx}u +   h^2\D_\t u+ \lambda     \frac{(h')^2}{h^2}{D}_\t {D}_\t u   \nonumber\\
&\qquad + 2 \lambda  \frac{ h'}{h}   {D}_{t} \partial_{x}u +  \lambda  \Bigl(\frac{ h''}{h}-\frac{h'^2}{h^2}\Bigr)  {D}_{t}u \qquad  \text{in $\Omega_*$.} \label{eq:reldiffope-ND-alt}
\end{align}
Here we identify the function $h \in \cP^{2,\alpha}_{p}(\R)$ with the function $(\t,x) \mapsto h(x)$ defined on $\ov{\Omega_*}$, and we do the same with $h'$ and $h''$.

Finally, it is easy to see that the  boundary conditions in (\ref{eq:perturbed-strip-ND-fixed-domain}) are equivalent to 
$$
u=1, \qquad  {D}_\t u=0   \qquad\textrm{ on $\de\O_*$}. 
$$
Hence (\ref{eq:perturbed-strip-ND-fixed-domain}) is equivalent to 
\begin{align}\label{eq:Proe1-ss2-ND}
 \begin{cases}
    L^h_\lambda u= 0 & \quad \textrm{ in}\quad  \Omega_* \\
u=1&   \quad \textrm{ on}\quad \partial \Omega_* \\
{D}_t u  = 0  &  \quad \textrm{ on}\quad \partial \Omega_*,
  \end{cases}
  \end{align}
where $L^h_\l$ is given by \eqref{eq:reldiffope-ND}.

\section{Functional setting}
\label{sec:functional-setting}
We    introduce  the spaces where we wish to solve our reduced problem.
For fixed $\a\in (0,1)$ and $k \in \N \cup \{0\}$, we set 
$$
C^{k,\a}_{p,rad}(\overline \Omega_*):= \{ u \in C^{k,\alpha}(\overline \Omega_*)\::\: \text{$u$ is radial in ${\t}$, $2\pi$ periodic and  even in $x$ } \},
$$
endowed with the norm $
u \mapsto \|u\|_{C^{k,\alpha}}:= \|u\|_{C^{k,\alpha}(\ov{\O_*})} .
$  We then define 
$$
X_k:= \{ u \in C^{k,\a}_{p,rad}(\overline \Omega_*)\::\:  {D}_\t u \in C^{k,\alpha}(\overline \Omega_*) \},
$$
endowed with the norm
$$
u \mapsto \|u\|_{k}:= \|u\|_{C^{k,\alpha}}+\|{D}_\t u \|_{C^{k,\alpha}}.
$$
\begin{Remark}
  \label{sec:functional-setting-2}
  {\rm (i) In the case $k=0$, the existence of the directional derivative ${D}_\t u$,
    defined e.g. by 
    $$
    {D}_\t u(t,x)= \frac{d}{d\s}\Bigl|_{\s=1} u(\sigma t,x) \qquad \text{for $(t,x) \in \Omega_*$,}
    $$
    and its $C^\alpha$-continuity up to the boundary is assumed by definition for $u \in X_0$.\\
 (ii) It is important to note that the differential operators $\partial_x$ and ${D}_t$ commute on $X_1$. More precisely, the existence and continuity of the derivative $\partial_x {D}_t u$ for $u \in X_1$ also implies the existence of $ {D}_t \partial_x u$, and the equality ${D}_t \partial_x u= \partial_x {D}_t u$ holds in $\ov{\Omega_*}$. For a proof of this fact (which is sometimes called Peano's Lemma), see e.g. \cite[pp. 235-236]{rudin}.
  Similar statements follow inductively for functions in $X_k$ and higher order derivatives.}
\end{Remark}\vspace{0.5ex}

Next we consider the closed subspaces
$$
X_k^D:=\{u \in X_k \::\: \text{$u= 0$ on $\partial \Omega_*$}\},
$$
and
$$
X_k^{DN}:=\{u \in X_k \::\: \text{$u={D}_\t u = 0$ on $\partial \Omega_*$}\},
$$
both also endowed with the norm $\|\cdot\|_k$. It is straightforward to see that these spaces are Banach spaces.

Finally, a key role in this paper will be played by the Banach space
$$
Y:= C^{1,\alpha}_{p,rad}(\overline \Omega_*) + X_0^D \; \subset \; C^{0,\alpha}_{p,rad}(\overline \Omega_*),
$$
which is endowed with the norm
$$
\|f\|_{Y}:= \inf \Bigl \{\|f_1\|_{C^{1,\alpha}} + \|f_2\|_{0} \::\: f_1 \in C^{1,\alpha}_{p,rad}(\overline \Omega_*), \; f_2 \in X_0^D,\; f= f_1 + f_2 \Bigr\}.
$$

\begin{Remark}
\label{Remark-Banach-space-quotient}  
{\rm The norm $\|\cdot \|_{Y}$ is the standard norm for the sum of two (embedded) Banach spaces. To see that it turns $Y$ into a Banach space, we note that $Y$ is isomorphic to the quotient space
$$
\bigl(C^{1,\alpha}_{p,rad}(\overline \Omega_*) \times  X_0^D\bigr) / V
$$
where $V \subset C^{1,\alpha}_{p,rad}(\overline \Omega_*) \times  X_0^D$ is the closed subspace of pairs $(f,f)$ with $f \in C^{1,\alpha}_{p,rad}(\overline \Omega_*)\cap  X_0^D$. An isomorphism is induced by the map
$$
C^{1,\alpha}_{p,rad}(\overline \Omega_*) \times  X_0^D  \to Y, \qquad (f_1,f_2) \mapsto f_1 - f_2
$$
whose kernel is precisely $V$. 
}\end{Remark}\vspace{0.5ex}

As in Section~\ref{eq.setting}, we now fix $m  \geq 1$ and consider the function $u_m \in X_2$ given by 
\begin{equation}
  \label{eq:def-u-n}
u_m(t,x)=U_m(|t|) \qquad \text{with}\quad U_m(r)=\dfrac{I_{N/2-1}(j_mr)}{I_{N/2-1}(j_m)}, 
\end{equation}
see (\ref{eq:def-tilde-un}). We also set
\begin{equation}
  \label{eq:def-cU}
\cU_0:=\{h \in C^{2,\alpha}_{p}(\R) \::\: h>-1\}
\end{equation}
and define the operator 
\begin{align}\label{eq:MappinmgF-ND}
F_\lambda:   X_2^{DN} \times \cU_0 \to Y, \qquad F_\lambda(u, h)= L_\lambda^{1+   h} (u+u_m),
  \end{align}
By construction, we then have the following:
{\em if $F_\lambda(u, h)=0$, then the function $\tilde u = u_m + u$ solves the problem
\begin{equation}
  \label{eq:equivalence-F-ND}
  \left \{
    \begin{aligned}
         L_\lambda^{1+  h} \ti u & =  0 && \qquad \text{in $\Omega_*$,}\\
           \ti u&= 1 &&\qquad \text{on $\partial \Omega_*$,}\\
          \n \ti {u} \cdot \t&= 0 &&\qquad \text{on $\partial \Omega_*$.}
    \end{aligned}
       \right.
\end{equation}}
We need to check first, however, that $F$ is well-defined by (\ref{eq:MappinmgF-ND}). This is done by the following lemma.

\begin{Lemma}
  \label{sec:functional-setting-1}  The map
  $$
  (u,h) \mapsto F_\lambda(u, h)= L_\lambda^{1+   h} (u+u_m)
  $$
  maps $X_2^{DN} \times \cU_0$ into $Y$.
\end{Lemma}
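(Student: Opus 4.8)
The plan is to show that for $(u,h) \in X_2^{DN} \times \cU_0$ the function $F_\lambda(u,h) = L_\lambda^{1+h}(u+u_m)$ decomposes as a sum of a $C^{1,\alpha}$-function and a function in $X_0^D$, which is precisely the defining property of membership in $Y$. I would work directly from the explicit formula \eqref{eq:reldiffope-ND-alt} for $L_\lambda^{1+h}$, applied to $\tilde u := u + u_m$, and examine each of the six terms separately, keeping track of two things: how many derivatives each term costs, and whether the term vanishes on $\partial \Omega_*$.

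First I would record the regularity bookkeeping. Since $u \in X_2^{DN}$ we have $u \in C^{2,\alpha}$, $D_t u \in C^{2,\alpha}$, and (by Remark~\ref{sec:functional-setting-2}(ii), applied inductively) $D_t D_t u$, $\partial_x u$, $\partial_{xx} u$, $D_t \partial_x u \in C^{1,\alpha}$; also $u_m \in X_2$ enjoys the same properties, and in fact $u_m$ is smooth radial so all its relevant derivatives are smooth. Since $h \in C^{2,\alpha}_p(\R)$ with $1+h > 0$, the coefficients $h^2$, $\frac{h'}{h}$, $\frac{(h')^2}{h^2}$ are $C^{1,\alpha}$ (or better) and $\frac{h''}{h} - \frac{(h')^2}{h^2}$ is $C^{0,\alpha}$. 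Now I go term by term in \eqref{eq:reldiffope-ND-alt} with $u$ replaced by $\tilde u$ and $h$ replaced by $1+h$: the terms $j_m^2 \tilde u$, $\lambda \partial_{xx}\tilde u$, $(1+h)^2 \Delta_t \tilde u$, $\lambda \frac{(h')^2}{(1+h)^2} D_t D_t \tilde u$, and $2\lambda \frac{h'}{1+h} D_t \partial_x \tilde u$ are all products of $C^{1,\alpha}$-functions, hence lie in $C^{1,\alpha}_{p,rad}(\overline{\Omega_*})$; these I collect into $f_1$. (One must check radial/periodic/even symmetry is preserved, which is immediate since $u, u_m$ are radial in $t$ and even $2\pi$-periodic in $x$, $h$ is even $2\pi$-periodic, and the operators $\Delta_t$, $D_t$, $\partial_x$, $\partial_{xx}$ preserve these symmetries.)

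The only genuinely problematic term is the last one, $f_2 := \lambda\bigl(\frac{h''}{1+h} - \frac{(h')^2}{(1+h)^2}\bigr) D_t \tilde u$, because the coefficient is merely $C^{0,\alpha}$ (it involves $h''$, the top derivative of $h$ with no room to spare). Here the key observation is the boundary behaviour: $D_t u = 0$ on $\partial\Omega_*$ because $u \in X_2^{DN}$, and $D_t u_m = 0$ on $\partial\Omega_*$ because $u_m$ corresponds to a Neumann eigenfunction, i.e.\ $U_m'(1) = 0$ (this is exactly the computation recorded after \eqref{eq:def-tilde-un}). Hence $D_t \tilde u = 0$ on $\partial\Omega_*$. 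Moreover $D_t\tilde u \in C^{0,\alpha}(\overline{\Omega_*})$ and its directional derivative $D_t(D_t\tilde u) = D_t\tilde u + \nabla_t^2\tilde u[t,t]$ (by \eqref{nabla-delta-comp-1}) is again $C^{0,\alpha}$, so $D_t\tilde u \in X_0$; combined with the vanishing on $\partial\Omega_*$ this gives $D_t\tilde u \in X_0^D$. Since $X_0^D$ is closed under multiplication by a $C^{0,\alpha}_{p}(\R)$-coefficient (the product is still $C^{0,\alpha}$, still vanishes on the boundary, and $D_t$ of the product $=$ coefficient times $D_t(D_t\tilde u)$ is still $C^{0,\alpha}$ — here one uses that the coefficient depends only on $x$, so $D_t$ of it is zero), we conclude $f_2 \in X_0^D$. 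Therefore $F_\lambda(u,h) = f_1 + f_2 \in C^{1,\alpha}_{p,rad}(\overline{\Omega_*}) + X_0^D = Y$.

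The main obstacle is precisely the last term: it is the manifestation of the "loss of derivatives" advertised in the introduction, and the whole point of introducing the space $Y$ (rather than a classical Hölder space) is to absorb it. The crux is to verify carefully that $D_t\tilde u \in X_0^D$ — i.e.\ that not only does $D_t\tilde u$ vanish on $\partial\Omega_*$ and lie in $C^{0,\alpha}$, but that $D_t(D_t\tilde u)$ is also $C^{0,\alpha}$ — and that $X_0^D$ is stable under multiplication by an $x$-dependent $C^{0,\alpha}$ coefficient. Everything else is routine product-rule and composition estimates in Hölder spaces, which I would not spell out in detail.
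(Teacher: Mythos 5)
There is a genuine gap. Your regularity bookkeeping contains two incorrect claims, and they concern precisely the two terms that, besides the $h''$-term you correctly isolate, cause the loss of derivatives that the space $Y$ is designed to absorb.

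First, you assert $\partial_{xx}u \in C^{1,\alpha}$. This is false: $u \in X_2^{DN}$ gives $u \in C^{2,\alpha}$ and $D_t u \in C^{2,\alpha}$, from which $\partial_{xx}u \in C^{0,\alpha}$ and $D_t \partial_{xx}u = \partial_{xx}D_t u \in C^{0,\alpha}$ (Peano), but nothing gives $\partial_{xx}u \in C^{1,\alpha}$. The resolution in the paper is not more Hölder regularity but a boundary condition: $u \equiv 0$ on $\partial\Omega_*$ forces $\partial_{xx}u \equiv 0$ on $\partial\Omega_*$ (since $x$ is tangential there), so together with the $D_t$-information above one gets $\partial_{xx}u \in X_0^D$, and this term belongs in the $X_0^D$-part of $Y$, not the $C^{1,\alpha}$-part. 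Second, you assert $\Delta_t \tilde u \in C^{1,\alpha}$. Also false: $\tilde u = u + u_m \in C^{2,\alpha}$ gives only $\Delta_t \tilde u \in C^{0,\alpha}$, and the extra hypothesis $D_t \tilde u \in C^{2,\alpha}$ upgrades only the weighted quantity via the identity
\begin{equation*}
|t|^2 \Delta_t \tilde u = D_t D_t \tilde u + (N-2) D_t \tilde u \in C^{1,\alpha},
\end{equation*}
because the weight $|t|^2$ degenerates at $t = 0$. The paper's proof handles this by splitting $(1+h)^2 \Delta_t \tilde u = (1+h)^2|t|^2 \Delta_t \tilde u + (1+h)^2 (1-|t|^2)\Delta_t \tilde u$: the first piece is $C^{1,\alpha}$ by the identity above, while the second piece vanishes on $\partial\Omega_*$ (where $|t|=1$) and one checks $D_t$ of it is Hölder using the radial identity $D_t \Delta_t \tilde u = \Delta_t(D_t\tilde u - 2\tilde u)$, so it lands in $X_0^D$. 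Your proposal skips this decomposition entirely, and without it the claim that the first five terms lie in $C^{1,\alpha}_{p,rad}(\overline{\Omega_*})$ does not hold. Your treatment of the last term (the $h''$ term) and the observation that $D_t\tilde u \in X_0^D$ and that $X_0^D$ is stable under multiplication by $x$-dependent $C^{0,\alpha}$ coefficients are correct and match the paper, but they cover only one of the three problematic terms.
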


\begin{proof}
Using (\ref{eq:reldiffope-ND-alt}) and the fact that $u_m$ does not depend on $x$, we may write $F_\lambda = F_\lambda^1 + F_\lambda^2$
with
\begin{align}\label{eq:MappinmgF-ND1}
  &F_\lambda^1(u,h)=j_m^2( u + u_m) +  (1+h)^2|t|^2 \D_t(u+u_m)   \nonumber\\
  &\qquad \quad \;\;\,+ \lambda     \frac{(h')^2}{(1+h)^2}{D}_{t} D_{t}(u+u_m) + 2 \lambda  \frac{ h' }{1+h}   {D}_t \partial_{ x  }u
\end{align}
and
\begin{align}\label{eq:MappinmgF-ND2}
F_\lambda^2(u,h) = \lambda \partial_{xx}u
 +  (1+h)^2(1-|t|^2) \D_t(u+u_m) + \lambda  \Bigl(\frac{ h''}{h}-\frac{h'^2}{h^2}\Bigr){D}_t(u+u_m). 
\end{align}
We first note that
\begin{equation}
  \label{eq:F-lambda-2-prop}
\text{$F_\lambda^1$ maps $X_2^{DN} \times \cU_0$ into $C^{1,\alpha}_{p,rad}(\overline \Omega_*)$.} 
\end{equation}
Indeed, if $u \in X_2^{DN}$, then $u + u_m,\: {D}_t(u+u_m) \in C^{2,\alpha}_{p,rad}(\overline \Omega_*)$ by definition of the space $X_2^{DN}$ and of the function $u_m$. In particular, ${D}_{t} D_{t}(u+u_m) \in C^{1,\alpha}_{p,rad}(\overline \Omega_*)$. Moreover, it follows from Remark~\ref{sec:functional-setting-2} that ${D}_t \partial_{ x  }u= \partial_x {D}_t u \in C^{1,\alpha}_{p,rad}(\overline \Omega_*)$.
Finally, we note that
\begin{equation}
  \label{Delta-t-C-1-alpha}
|t|^2 \D_t(u+u_m) \in C^{1,\alpha}_{p,rad}(\overline \Omega_*).
\end{equation}
In fact, this follows since $w:=u+u_m$ is radial in $t$ and therefore 
\begin{equation}
\label{computation-t-Laplace}
\begin{split}
  |t|^2 \D_t w(t,x) &= \nabla^2_t w(t,x)[t,t] +(N-1) \nabla w(t,x) \cdot t\\
  &= {D}_t{D}_tw(t,x) + (N-2)D_t w(t,x),
\end{split}
\end{equation}
where we used (\ref{nabla-delta-comp-1}) in the last step. With this representation, (\ref{Delta-t-C-1-alpha}) follows since ${D}_tw \in C^{2,\alpha}_{p,rad}(\overline \Omega_*)$ by definition of $u_m$ and the space $X_2^{DN}$.
From the facts collected above we deduce that $F_\lambda^1(u,h) \in C^{1,\alpha}_{p,rad}(\overline \Omega_*)$ if $h \in \cU_0$, and thus (\ref{eq:F-lambda-2-prop}) follows.\\ 
Next we show that
\begin{equation}
  \label{eq:F-lambda-2-prop-1}
\text{$F_\lambda^2$ maps $X_2^{DN} \times \cU_0$ into $X_0^D$.} 
\end{equation}
To see this, let $u \in X_2^{DN}$. Then $u \equiv 0$ on $\partial \Omega_*$ implies that $\partial_{xx} u \equiv 0$ on $\partial \Omega_*$. Moreover, $\partial_{xx} u \in C^{0,\alpha}(\overline \Omega_*)$ and $D_\t \partial_{xx} u= \partial_{xx} D_\t u \in C^{0,\alpha}(\overline \Omega_*)$ by Remark~\ref{sec:functional-setting-2}. Hence $\partial_{xx}u \in X_0^D$.
Moreover, we have ${D}_tu \equiv 0$ on $\partial \Omega_*$ by definition of $X_2^{DN}$ and ${D}_tu_m \equiv 0$ on
$\partial \Omega_*$ by the definition of $u_m$. In addition, ${D}_t(u+u_m) \in C^{0,\alpha}$ and  ${D}_t {D}_t(u+u_m) \in C^{0,\alpha}(\overline \Omega_*)$ since $u,u_m \in C^{2,\alpha}(\overline \Omega_*)$. Hence ${D}_t(u+u_m) \in X_0^D$.
Finally, we note that 
\begin{equation}
  \label{Delta-t-C-1-alpha-2}
(1-|t|^2) \D_t(u+u_m) \in X_0^D.
\end{equation}
Obviously, we have $(1-|t|^2) \D_t(u+u_m) \equiv 0$ on $\partial \Omega_*$. Moreover, $(1-|t|^2) \D_t(u+u_m) \in C^{0,\alpha}(\overline \Omega_*)$ since $u,u_m \in C^{2,\alpha}(\overline \Omega_*)$. Finally, a direct computation shows that for $w= u+u_m$ we have 
$$
{D}_t \D_t w = \Delta_t \bigl({D}_t w - 2 w\bigr),
$$
whereas ${D}_t w - 2 w \in C^{2,\alpha}(\overline \Omega_*)$ by the definition of the space $X_2^{DN}$. Hence $
{D}_t \D_t w \in C^{0,\alpha}(\overline \Omega_*)$ and thus also ${D}_t\Bigl( (1-|t|^2) \D_t(u+u_m)\bigr) \in C^{0,\alpha}(\overline \Omega_*)$. This shows (\ref{Delta-t-C-1-alpha-2}). We now conclude that $F_\lambda^2(u,h) \in X_0^D$ if $h \in \cU_0$, and thus (\ref{eq:F-lambda-2-prop}) follows.

The claim of the lemma now follows from (\ref{eq:F-lambda-2-prop}), (\ref{eq:F-lambda-2-prop-1}) and the definition of the space $Y$.
\QED \end{proof}

We now wish to reduce \eqref{eq:equivalence-F-ND} further to a problem depending only on the variable $u$ and not on $h$ anymore. A motivation for the following reduction procedure will be given in Remark~\ref{rem:const-sol} below. We define the linear map
\begin{equation}
\label{def-M}
M: X_2^D \to X_2^{DN} \times  C^{2,\alpha}_{p}(\R),\qquad  M u = (M_1 u,h_u)
\end{equation}
with
\begin{equation}\label{eq:DeffM-ND--2}
h_u(x)= \frac{I_{N/2-1}(j_m)}{j_m^2 I_{N/2-1}''(j_m)} {D}_t u (e_1,x), \qquad x \in \R
\end{equation}
and
\begin{equation}\label{eq:DeffM-ND--1}
[M_1 u](t,x)= u(t,x) - g(|t|) h_u(x), \qquad (t,x) \in \Omega_*,
\end{equation}
where $e_1 = (1,0,\dots,0)$ is the first coordinate vector in $\R^N$ and we define 
\begin{equation}
  \label{eq:def-g_n}
g \in C^\infty([0,\infty)),\qquad  g(r) = r U_m'(r)= \frac{j_{m} r I_{N/2-1}'(j_{m} r)}{I_{N/2-1}(j_m)}= - j_{m}^2 r^2 \frac{I_{N/2}(j_{m} r)}{I_{N/2-1}(j_m)}. 
\end{equation}
Here we used (\ref{eq:derrec}) for the last equality, and $U_m$ is defined as before by $u_m(t,x) = U_m(|t|)$, see (\ref{eq:def-u-n}). We easily see that
\begin{equation}
  \label{eq:g-properties}
g'(0)=0, \qquad g(1)=0 \qquad \text{and}\qquad g'(1)= \frac{j_m^2 I_{N/2-1}''(j_m)}{I_{N/2-1}(j_m)} . 
\end{equation}
From this we deduce that indeed we have $M_1 u \in X_2^{DN}$ for $u \in X_2^D$. Moreover, $h_u \in  
C^{2,\alpha}_{p}(\R)$ for $u \in X_2^D$ by definition of $X_2^D$. Hence the linear map $M$ is well defined by (\ref{def-M}). Next we note the following.

\begin{Lemma}
  \label{M-isomorphism}
The map $M$ defined by (\ref{def-M}) is a topological isomorphism $X_2^D \to X_2^{DN} \times  C^{2,\alpha}_{p}(\R)$.  
\end{Lemma}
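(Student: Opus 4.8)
The plan is to show that $M$ is linear, bounded, and bijective, and then conclude by the open mapping theorem that its inverse is bounded. Linearity is immediate from the defining formulas \eqref{eq:DeffM-ND--2} and \eqref{eq:DeffM-ND--1}, since $u \mapsto {D}_t u(e_1,x)$ is linear and $g$ is a fixed function. Boundedness of $M$ is also straightforward: the map $u \mapsto h_u$ is bounded from $X_2^D$ to $C^{2,\alpha}_p(\R)$ because ${D}_t u \in C^{2,\alpha}(\ov{\Omega_*})$ for $u \in X_2^D$ (by definition of $X_2$) and restriction to the boundary slice $\{t = e_1\}$ is continuous into $C^{2,\alpha}_p(\R)$; then $M_1 u = u - g(|t|)h_u(x)$ is bounded into $X_2^{DN}$ once we check the membership claim, which the text already indicates follows from the properties \eqref{eq:g-properties}, namely $g(1) = 0$ (so $M_1 u = 0$ on $\partial\Omega_*$) and $g'(1) = \frac{j_m^2 I_{N/2-1}''(j_m)}{I_{N/2-1}(j_m)}$ together with the normalizing constant in \eqref{eq:DeffM-ND--2} chosen precisely so that ${D}_t(M_1 u) = {D}_t u - g'(|t|)|t|^{-1}\cdot|t|\,h_u = {D}_t u - (\text{something})h_u$ vanishes on $\partial\Omega_*$; one verifies ${D}_t(g(|t|)h_u(x)) = |t|g'(|t|)h_u(x)$, which at $|t|=1$ equals $g'(1)h_u(x) = {D}_t u(e_1 \tfrac{t}{|t|} \cdot \dots)$. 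The key identity to record is that the constant in \eqref{eq:DeffM-ND--2} is exactly $1/g'(1)$, so ${D}_t u(e_1,x) = g'(1) h_u(x)$ and hence ${D}_t(M_1 u) = 0$ on $\partial\Omega_*$.

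For bijectivity, I would exhibit the inverse explicitly. Given $(v,h) \in X_2^{DN} \times C^{2,\alpha}_p(\R)$, define $u(t,x) := v(t,x) + g(|t|)h(x)$. One checks directly that $u \in X_2^D$: indeed $u = 0$ on $\partial\Omega_*$ since $v = 0$ there and $g(1) = 0$; the regularity $u, {D}_t u \in C^{2,\alpha}(\ov{\Omega_*})$ follows from $g \in C^\infty([0,\infty))$ with $g'(0) = 0$ (ensuring $g(|t|)$ and ${D}_t(g(|t|)h(x)) = |t|g'(|t|)h(x)$ are smooth in $t$ near the origin, using that $g$ is even-compatible as a function of $|t|$ — more precisely $g(r)/r = U_m'(r)$ extends smoothly and $g$ itself is a smooth function of $r^2$ times appropriate powers). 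Then one verifies $M u = (v,h)$: since $v \in X_2^{DN}$ means ${D}_t v = 0$ on $\partial\Omega_*$, we get ${D}_t u(e_1,x) = g'(1)h(x)$, so $h_u = h$ by the choice of constant, and consequently $M_1 u = u - g(|t|)h_u(x) = v$. Conversely, if $M u = 0$ then $h_u = 0$ and $M_1 u = u = 0$, giving injectivity; surjectivity follows from the explicit formula just given. Thus $M$ is a continuous linear bijection between Banach spaces, and the open mapping theorem yields that $M^{-1}$ is continuous, i.e. $M$ is a topological isomorphism.

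The main obstacle I anticipate is the regularity bookkeeping at the origin $t = 0$: one must be careful that $g(|t|)h(x)$ and its directional derivative ${D}_t$ genuinely lie in $C^{2,\alpha}_{p,rad}(\ov{\Omega_*})$ with ${D}_t$ of this product also in $C^{2,\alpha}$, as required for membership in $X_2$. This hinges on $g$ being smooth as a function of $r = |t|$ \emph{and} on $g(|t|)$ being a genuinely smooth function of $t \in \R^N$ near $0$; the latter uses the series expansion of $I_{N/2-1}$, which shows $g(r) = r U_m'(r)$ is an even smooth function of $r$ (a power series in $r^2$ starting at order $r^2$), hence $g(|t|)$ is smooth in $t$. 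Once this point is handled, everything else is routine linear-algebra-plus-open-mapping-theorem. I would isolate the regularity claim as the first step, then assemble the explicit inverse, then invoke the open mapping theorem.
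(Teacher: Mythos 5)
Your proposal is correct and follows essentially the same strategy as the paper: $M$ is bounded linear, has an explicit inverse $N(v,h)(t,x)=v(t,x)+g(|t|)h(x)$, and one invokes the open mapping theorem. The paper's proof is briefer (it simply writes down $N$ and appeals to the open mapping theorem, leaving the regularity/membership checks to the paragraph preceding the lemma), whereas you have unpacked the details — in particular the observation that the normalizing constant in \eqref{eq:DeffM-ND--2} equals $1/g'(1)$, and the point that $g(|t|)$ and $|t|g'(|t|)$ are genuinely smooth in $t \in \R^N$ near the origin because $g$ is an even power series in $r$ — but these are elaborations of the same argument, not a different route.
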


\begin{proof}
  By definition, $M$ defines a bounded linear map $X_2^D \to X_2^{DN} \times  C^{2,\alpha}_{p}(\R)$. Moreover, $M$ is bijective with inverse
$$
N:  X_2^{DN} \times  C^{2,\alpha}_{p}(\R) \to X_2^D, \qquad [N(u,h)](t,x)=u(t,x)+g(|t|)h(x).
$$
Since both $X_2^D$ and $X_2^{DN} \times  C^{2,\alpha}_{p}(\R)$ are Banach spaces, the claim now follows from the open mapping theorem.
\QED \end{proof}

With the help of the linear map $M$, we may now reduce our problem further and define
\begin{equation}\label{eq:DeffGl-ND}
G_\lambda: \cU  \to Y,\qquad G_\lambda = F_\lambda \circ M
\end{equation}
with 
$$
\cU:= \left\{u \in X_2^D\::\: \, h_u(x)  >-1 \;\text{for $x \in \R$} \right \}.
$$
We then note the following equivalences:
\begin{align}
  G_\lambda(u)= 0\quad &\Longleftrightarrow \quad F_\lambda(M_1 u, h_u)=0\nonumber\\
  &\Longleftrightarrow \quad L_\lambda^{1+ h_u} (M_1 u+u_m)  =  0 \qquad \text{in $\Omega_*$}\nonumber\\
  &\Longleftrightarrow \quad \text{$M_1 u +u_m$ solves (\ref{eq:equivalence-F-ND}) with $h= h_u $.} \label{eq:Eqtosolve-ND}
  \end{align}
In the last step, we used the fact that $M_1 u \in X_2^{DN}$.\\ 

\begin{Remark}\label{rem:const-sol}
 
{\rm (i)   In principle, any function $g$ satisfying (\ref{eq:g-properties}) could be used to define the linear transformation $M$. However, we shall see in Proposition~\ref{sec:functional-setting-4} below that, with the choices made above, the derivative $D G_\l(0)$ of the map $G_\l$ defined in (\ref{eq:DeffGl-ND}) has a particularly nice form.\\ 
(ii) Related to this fact, we now provide an intrinsic motivation for the ansatz made in \eqref{eq:Eqtosolve-ND}. For this we note that, extending the function $u_m$ to all of $\R^N \times \R$ by definition~(\ref{eq:def-u-n}), we have $L_\l u_m=0$ in $\R^N \times \R$ and therefore, for fixed $h \in C^{2,\alpha}_{p}(\R)$, 
  \begin{equation}
\label{eq:perturbed-u-n-h}
L^{1+ h}_\l (u_m^h)= 0 \qquad \text{with $u_m^h \in C^{2,\alpha}(\ov{\Omega_*}),\quad$ $u_m^h(t,x)= u_m(\frac{t}{1+h(x)},x)$}
\end{equation}
by \eqref{eqreladiffopets-ND}. Moreover,
\begin{equation}
\label{eq:approximation-u-n-h}
u_m^h = u_m - w_h + O(\|h\|_{C^{2,\alpha}}^2),
\end{equation}
where
\begin{equation}
\label{eq:approximation-u-n-h-1}
w_h(t,x):= {D}_t u_m(t,x) h(x)= |t|U_m'(|t|) h(x)= g(|t|)h(x) 
\end{equation}
This shows that  $u_m-w_h$ is a natural approximate solution for any small $h$.  We can thus   look for a solution to (\ref{eq:equivalence-F-ND}) of the form $\tilde u:=u_m-w_h +u$,  with $u $  and $h$ small. Assuming here that $u=0$ on $\de\O_*$, we have $\tilde u \equiv 1$ on $\de\O_*$ by the definition of $u_m$ and since $g(1)=0$. Moreover, since ${D}_t  u_m \equiv 0$ on $\de\O_*$, the condition ${D}_t  \tilde u \equiv 0$ on $\de\O_*$ enforces 
$$
{D}_t u(e_1,x)= {D}_t w_h(e_1,x) = g'(1) h(x) 
$$
and therefore 
$$
h(x)= \frac{{D}_t u (e_1,x)}{g'(1)} =h_u(x)  \qquad \text{for $x \in \R$.}
$$
This is what motivates the structure of the solution as described by \eqref{eq:DeffM-ND--2}, \eqref{eq:DeffM-ND--1} and  \eqref{eq:Eqtosolve-ND}.} 
\end{Remark}\vspace{0.5ex}

Next we note that, by definition of the map $G_\lambda$ in (\ref{eq:DeffGl-ND}), we have  
\begin{equation}
  \label{eq:trivial-branch}
G_\lambda(0) = 0 \qquad \text{for all $\lambda >0$.}
\end{equation}
To check if this trivial branch of solutions admits bifurcation, we need to consider the derivative $D G_\lambda(0)$.  We have the following result.

\begin{Proposition}
  \label{sec:functional-setting-4}
  The map $G_\l :\cU \subset X_2^D \to Y $ defined by  \eqref{eq:DeffGl-ND} is of class $C^\infty$. Moreover for all $v\in X_2^D$,
\begin{align}\label{eequivla-ND}
D G_\lambda(0)v= L_\l v=j_{m}^2v+ \lambda v_{xx} + \D_t v.
\end{align}
\end{Proposition}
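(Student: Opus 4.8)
The plan is to compute $D G_\lambda(0)$ directly from the chain rule $G_\lambda = F_\lambda \circ M$ and the explicit formula \eqref{eq:reldiffope-ND-alt} for $L_\lambda^{1+h}$. Since $M$ is a bounded linear isomorphism, $G_\lambda$ is of class $C^\infty$ as soon as $F_\lambda$ is, and we have $D G_\lambda(0) v = D F_\lambda(M(0)) M v = D F_\lambda(0,0)(M_1 v, h_v)$, using that $M_1(0) = 0$ and $h_0 = 0$. So the first reduction is to establish that $F_\lambda$ is smooth on $X_2^{DN} \times \cU_0$; this follows because, in the expression $L_\lambda^{1+h}(u+u_m)$ given by \eqref{eq:reldiffope-ND-alt}, the dependence on $(u,h)$ is built from bounded linear maps in $u$, the smooth (on $\cU_0$) rational expressions $\tfrac{h'}{1+h}$, $\tfrac{(h')^2}{(1+h)^2}$, $\tfrac{h''}{h}-\tfrac{h'^2}{h^2}$ in $h$ and its derivatives, and multiplication, which is a bounded bilinear (hence smooth) operation between the relevant Hölder-type spaces; one checks target spaces exactly as in the proof of Lemma~\ref{sec:functional-setting-1}, so that $F_\lambda$ indeed lands in $Y$.

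Next I would linearize $F_\lambda$ at $(0,0)$. Writing $F_\lambda(u,h) = L_\lambda^{1+h}(u+u_m)$ and inspecting \eqref{eq:reldiffope-ND-alt}, every term involving $h$, $h'$ or $h''$ comes multiplied by a factor that vanishes to first order at $h=0$ except for the term $h^2 \D_\t(u+u_m)$, which at $h=0$ equals $\D_\t(u+u_m)$ and whose derivative in the direction $(v, k)$ contributes $2k\,\D_\t u_m + \D_\t v$ — wait, more carefully: $(1+h)^2 \D_\t(u+u_m)$ differentiated at $(0,0)$ in direction $(v,k)$ gives $\D_\t v + 2k\,\D_\t u_m$. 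The remaining $h$-dependent terms $\lambda \tfrac{(h')^2}{(1+h)^2} D_\t D_\t(u+u_m)$, $2\lambda \tfrac{h'}{1+h} D_\t \partial_x u$, $\lambda(\tfrac{h''}{h}-\tfrac{h'^2}{h^2}) D_\t(u+u_m)$ all have vanishing linearization at $(0,0)$ in the $h$-direction when paired against the base point $u=0$ (the first because it is quadratic in $h'$; the third because its linearization is $\lambda \tfrac{k''}{1}\, D_\t u_m$ — here I must be careful, since $D_\t u_m \not\equiv 0$ in general, so this term does survive). I would therefore carefully keep the surviving terms: the constant part $j_m^2(u+u_m) + \D_\t u_m$ vanishes because $L_\lambda u_m = 0$, leaving $j_m^2 v + \lambda v_{xx} + \D_\t v$ from the $u$-direction, plus the $h$-direction contribution $2 h_v\,\D_\t u_m + \lambda h_v''\, D_\t u_m + (\text{terms with }h_v')$. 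Then I substitute the specific choice $h = h_v$ and $u = M_1 v = v - g(|t|) h_v$.

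The crux of the computation — and the reason for the particular normalizations in \eqref{eq:DeffM-ND--2} and \eqref{eq:def-g_n} — is that the $h_v$-dependent terms exactly cancel the correction $-g(|t|)h_v$ introduced by $M_1$. Concretely, $D F_\lambda(0,0)(M_1 v, h_v) = L_\lambda(M_1 v) + (\text{$h_v$-terms})$, and since $M_1 v = v - g(|t|)h_v$ with $g(|t|) = |t| U_m'(|t|) = D_\t u_m$ (as a function of $|t|$), we get $L_\lambda(M_1 v) = L_\lambda v - L_\lambda\bigl(g(|t|)h_v(x)\bigr)$. Expanding $L_\lambda\bigl(g(|t|)h_v(x)\bigr) = j_m^2 g h_v + \lambda g h_v'' + h_v \D_\t g$, and recalling from \eqref{eq:approximation-u-n-h}–\eqref{eq:approximation-u-n-h-1} that $g(|t|)h_v(x) = w_{h_v} = D_\t u_m \cdot h_v$ is precisely the first-order term in $u_m^{h_v} - u_m$, one sees that $L_\lambda w_{h_v}$ equals exactly the surviving $h_v$-contribution in $D F_\lambda(0,0)$ coming from differentiating $L_\lambda^{1+h}u_m$ — this is just $\tfrac{d}{d\tau}\big|_{\tau=0} L_\lambda^{1+\tau h_v} u_m = 0$ differentiated, more precisely it reflects that $L_\lambda^{1+h} u_m^h = 0$ identically, so the linearization of $h \mapsto L_\lambda^{1+h} u_m$ equals $L_\lambda$ applied to the linearization of $h \mapsto u_m^h$, which is $-L_\lambda w_h$. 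Hence the two contributions cancel and $D G_\lambda(0) v = L_\lambda v = j_m^2 v + \lambda v_{xx} + \D_t v$. I expect the main obstacle to be bookkeeping: carefully tracking which $h$-terms in \eqref{eq:reldiffope-ND-alt} survive the linearization at the base point $(0,0)$ (those linear, not quadratic, in $h$ and paired with $u_m \ne 0$), and verifying the cancellation rigorously rather than via the heuristic $O(\|h\|^2)$ expansion in Remark~\ref{rem:const-sol} — i.e., turning that remark's motivation into an actual identity. Checking that each surviving term genuinely lies in $Y$ (and that the map is Fréchet, not merely Gâteaux, differentiable) is routine given Lemma~\ref{sec:functional-setting-1}'s estimates, so I would state it briefly and move on.
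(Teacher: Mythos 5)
Your proposal is correct and follows essentially the same route as the paper: reduce smoothness to that of $F_\lambda$ (via the decomposition from Lemma~\ref{sec:functional-setting-1}), then compute $DG_\lambda(0)v = \partial_u F_\lambda(0,0)M_1 v + \partial_h F_\lambda(0,0)h_v$ by the chain rule and observe that $\partial_u F_\lambda(0,0)M_1 v = L_\lambda v - L_\lambda w_{h_v}$ while $\partial_h F_\lambda(0,0)h_v = L_\lambda w_{h_v}$, the latter identity coming precisely from differentiating $L_\lambda^{1+sh}u_m^{sh}\equiv 0$ at $s=0$. That differentiation of the exact identity $L_\lambda^{1+h}u_m^h=0$ — which you correctly identify as the rigorous version of the heuristic in Remark~\ref{rem:const-sol} — is exactly the paper's equation~\eqref{zero-diff}, so the two arguments coincide.
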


\begin{proof}
  To see that $G_\l$ is of class $C^\infty$, by the linearity of the operator  $M$, we only need to check that $F_\l:X_2^{DN} \times \cU_0 \to Y$ is of class $C^\infty$.  Now from the definition of the norm in $Y$, it is also easy to see that  $F_\l$ is of class $C^\infty$ as soon as the maps
  $$
  F_\lambda^1: X_2^{DN} \times \cU_0 \to C^{1,\alpha}_{p,rad}(\overline \Omega_*) \;\subset\; Y
  $$
  and
  $$
  F_\lambda^2: X_2^{DN} \times \cU_0 \to X_0^{D} \;\subset\; Y
  $$
defined in (\ref{eq:MappinmgF-ND1}) and (\ref{eq:MappinmgF-ND2}) are of class $C^\infty$, and in this case $D^k F_\l=D^kF^1_\l+D^kF^2_\l$ for all $k\in \N$.  Now the $C^\infty$-regularity $F_\l^i$, $i=1,2$ follows clearly from the definitions of these maps.

To see (\ref{eequivla-ND}), we start by differentiating (\ref{eq:perturbed-u-n-h}) to get, for fixed $h \in 
C^{2,\alpha}_{p}(\R)$
\begin{align}
  0 = \frac{d}{ds}\Bigl|_{s=0} \Bigl(L^{1+ sh}_\l (u_m^{sh})\Bigr) 
    &= \Bigl(\frac{d}{ds}\Bigl|_{s=0} L^{1+ sh}_\l\Bigr)u_m + L^1_\l \frac{d}{ds}\Bigl|_{s=0}u_m^{sh} \nonumber\\
  &= \Bigl(\frac{d}{ds}\Bigl|_{s=0} L^{1+ sh}_\l\Bigr)u_m -  L_\l w_h  \label{zero-diff}  
\end{align}
with $w_h(t,x)= g(|t|)h(x)$, where we used (\ref{eq:approximation-u-n-h}) in the last step.
By the chain rule, we now have
\begin{equation}
DG_\l(0)v=\de_u F_\l(0,0)M_1 v+ \de_h F_\l(0,0)h_v \qquad \text{for $v \in X_2^D$,}
\label{eq:DGl0-compt}
\end{equation}
where, since by definition $M_1 v = v - w_{h_v}$ with $w_{h_v}(t,x) = g(|t|)h_v(x)$,  
$$
\de_u F_\l(0,0)M_1 v = L_\lambda M_1 v =L_\lambda v - L_\lambda w_{h_v}
$$
and, by (\ref{zero-diff}), 
$$
\de_h F_\l(0,0)h_v = \Bigl(\frac{d}{ds}\Bigl|_{s=0} L^{1+ sh_v}_\l\Bigr)u_m = L_\l w_{h_v}.
$$
Combining these identites gives $DG_\l(0)v=L_\lambda v$ for $v \in X_2^D$, as claimed.
\QED\end{proof}

In order to apply bifurcation theory in the functional analytic setting proposed in this section, it is important to observe that $L_\lambda = DG_\lambda : X_2^D \to Y$ is a Fredholm operator for every $\lambda >0$. We shall prove this in Proposition~\ref{fredholm} below, and the following lemma is a key step towards this property. 

\begin{Lemma}
  \label{regularity-lemma-1-ND} 
Let $f \in C^{0,\alpha}_{p,rad}(\ov{\Omega_*})$ and let $u \in C^{2,\alpha}_{p,rad}(\ov{\Omega_*})$ satisfy 
  \begin{equation}
    \label{eq:regularity-x0-lemma-ND}
 \D_t u + \lambda u_{xx} = f \quad \text{in $\Omega_*$,}\qquad u = 0 \quad \text{on $\partial \Omega_*$.}   
  \end{equation}
If $f \in Y$, then $u \in X_2^D$. 
\end{Lemma}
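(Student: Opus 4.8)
The plan is to exploit the structure of the equation $\D_t u + \lambda u_{xx} = f$ together with the decomposition $f = f_1 + f_2$ with $f_1 \in C^{1,\alpha}_{p,rad}(\ov{\Omega_*})$ and $f_2 \in X_0^D$ furnished by the definition of $Y$. Since $u$ is already known to be in $C^{2,\alpha}_{p,rad}(\ov{\Omega_*})$ and vanishes on $\partial\Omega_*$, the only thing left to prove is that $D_t u \in C^{2,\alpha}(\ov{\Omega_*})$, i.e.\ that $D_t u$ gains two derivatives in the H\"older scale rather than the zero derivatives that are a priori available. The key observation is the commutation identity for the operator $L_\lambda^0 := \D_t + \lambda \partial_{xx}$ with the radial dilation field $D_t$.

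First I would compute, for a radial-in-$t$ function $w$, how $D_t$ interacts with $\D_t$ and $\partial_{xx}$. Using \eqref{nabla-delta-comp-1} and the identity ${D}_t \D_t w = \Delta_t({D}_t w - 2w)$ already derived in the proof of Lemma~\ref{sec:functional-setting-1} (valid for $C^2$ functions radial in $t$), together with the obvious fact that $D_t$ commutes with $\partial_{xx}$ on $X_1$ by Remark~\ref{sec:functional-setting-2}(ii), one gets
\begin{equation*}
L_\lambda^0 (D_t u) = D_t(L_\lambda^0 u) + 2 \D_t u = D_t f + 2 f \quad \text{in } \Omega_*.
\end{equation*}
Here $D_t f + 2f$ is the forcing term for the function $v := D_t u$. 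Now the point is to show $D_t f + 2f \in C^{0,\alpha}_{p,rad}(\ov{\Omega_*})$: for the $f_1$-part this is clear since $f_1 \in C^{1,\alpha}$, so $D_t f_1 = \nabla f_1 \cdot t \in C^{0,\alpha}$; for the $f_2 \in X_0^D$-part, by definition of $X_0$ we have $D_t f_2 \in C^{0,\alpha}$ directly. So $v = D_t u$ solves an elliptic equation $L_\lambda^0 v = g$ with $g \in C^{0,\alpha}_{p,rad}(\ov{\Omega_*})$, and moreover $v = D_t u = 0$ on $\partial\Omega_*$ (since $u \equiv 0$ there forces its tangential derivatives to vanish, and $D_t$ restricted to $\partial\Omega_*$ is the normal derivative direction times $|t|=1$ — one should check here that $u\equiv 0$ on the sphere $\{|t|=1\}$ alone does \emph{not} give $D_t u = 0$; rather $v = D_t u$ restricted to $\partial\Omega_*$ is some $C^\alpha$ boundary datum, not necessarily zero). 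So more carefully: $v$ solves $L_\lambda^0 v = g \in C^{0,\alpha}$ with Dirichlet datum $v|_{\partial\Omega_*} = \psi$ where $\psi(e_1,x)$ equals (a multiple of) the normal derivative of $u$, which is in $C^{1,\alpha}(\partial\Omega_*)$ since $u \in C^{2,\alpha}$.

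By standard Schauder theory (interior plus boundary estimates for the uniformly elliptic constant-coefficient-type operator $\D_t + \lambda\partial_{xx}$ on the cylinder, using periodicity in $x$ to handle the noncompact direction), the solution $v$ of $L_\lambda^0 v = g$ with $C^{0,\alpha}$ right-hand side and $C^{1,\alpha}$ (hence $C^{2,\alpha}$ is not needed) boundary data satisfies $v \in C^{2,\alpha}_{\text{loc}}$, and with the periodicity this upgrades to $v \in C^{2,\alpha}_{p,rad}(\ov{\Omega_*})$; one must invoke uniqueness of the Dirichlet problem for $L_\lambda^0$ (which holds since $j_m^2$ does not enter here — the operator is $\D_t + \lambda\partial_{xx}$ without zeroth order term, so it satisfies the maximum principle) to identify this regular solution with $D_t u$. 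Then $D_t u \in C^{2,\alpha} \subset C^{2,\alpha}$, and combined with $u \in C^{2,\alpha}_{p,rad}$ and $u|_{\partial\Omega_*}=0$ this is exactly the statement $u \in X_2^D$.

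\textbf{The main obstacle} I anticipate is the bookkeeping around the boundary condition for $v = D_t u$: one must be careful that $u \equiv 0$ on $\partial\Omega_*$ only controls the tangential derivatives of $u$ there, so $v|_{\partial\Omega_*}$ is a genuine nonzero $C^{1,\alpha}$ function rather than zero, and hence the Schauder boundary estimate must be applied with inhomogeneous Dirichlet data; one needs the compatibility that this boundary datum has enough regularity ($C^{1,\alpha}$ suffices for $C^{2,\alpha}$ regularity up to the boundary). A secondary subtlety is justifying the commutation identity $L_\lambda^0(D_t u) = D_t f + 2f$ rigorously: a priori $u$ is only $C^{2,\alpha}$, so $D_t u$ is only $C^{1,\alpha}$ and $L_\lambda^0(D_t u)$ need not exist classically — this should be handled either by a difference-quotient/mollification argument in the dilation variable $\sigma$, differentiating the equation $L_\lambda^0(u(\sigma t, x)) = (\text{scaled } f)$ and passing to the limit, or by testing against smooth functions and integrating by parts, using that the identity ${D}_t \D_t w = \Delta_t(D_t w - 2w)$ holds distributionally.
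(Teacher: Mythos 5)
Your commuted equation has a small algebraic slip: from $L_\lambda^0(D_t u) = D_t(L_\lambda^0 u) + 2\Delta_t u$ and $L_\lambda^0 u = f$ you get $L_\lambda^0(D_t u) = D_t f + 2\Delta_t u$, but then $\Delta_t u = f - \lambda u_{xx}$, not $f$, so the right-hand side is $D_t f + 2f - 2\lambda u_{xx}$, not $D_t f + 2f$. This is harmless (since $u_{xx} \in C^{0,\alpha}$, the extra term is still in the right space), but the paper's equation \eqref{eq:Dtuo} has this $-2\lambda u_{xx}$ term and yours does not.

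The genuine gap is in the boundary condition for $v = D_t u$. You propose treating the Dirichlet problem for $v$ with boundary datum $\psi := v|_{\partial\Omega_*}$, and you assert that ``$C^{1,\alpha}$ (hence $C^{2,\alpha}$ is not needed) boundary data'' suffices for $C^{2,\alpha}$ regularity of $v$ up to the boundary. That is false for the Dirichlet problem: Schauder's estimate for $\Delta v = g$, $v = \psi$ on $\partial\Omega$ requires $\psi \in C^{2,\alpha}(\partial\Omega)$ to conclude $v \in C^{2,\alpha}(\ov\Omega)$; with only $C^{1,\alpha}$ Dirichlet data the solution is in general merely $C^{1,\alpha}$ near the boundary. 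Since $u \in C^{2,\alpha}$ a priori only gives $v = D_t u \in C^{1,\alpha}(\ov{\Omega_*})$, you cannot produce a $C^{2,\alpha}$ Dirichlet datum for $v$, and the argument stalls. The paper sidesteps this exactly by formulating a \emph{Neumann} boundary condition instead: from ${D}_t v = |t|^2\Delta_t u - (N-2)D_t u$ and $\Delta_t u = f$ on $\partial\Omega_*$ (using $u\equiv 0$ there to kill $\lambda u_{xx}$), one gets $D_t v|_{\partial\Omega_*} = f - (N-2)D_t u$, and the crucial point is that $f|_{\partial\Omega_*} = f_1|_{\partial\Omega_*} \in C^{1,\alpha}(\partial\Omega_*)$ because the $X_0^D$-part $f_2$ vanishes on the boundary. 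For the Neumann problem, $C^{1,\alpha}$ boundary data \emph{does} give $C^{2,\alpha}$ regularity up to the boundary, which is precisely one derivative less than Dirichlet requires and exactly what is available here. This Neumann formulation — and the role of the splitting $f=f_1+f_2$ in making the Neumann datum $C^{1,\alpha}$ — is the idea your proposal is missing.
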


\begin{proof}
  Since $u \in C^{2,\alpha}(\ov{\Omega_*})$, we have 
  $v:={D}_t u \in C^{1,\alpha}_{p,rad}(\ov{\Omega_*}).$  We claim that $v \in C^{2,\alpha}_{loc}({\Omega_*})$, and that $v$ satisfies the equation 
   \be\label{eq:Dtuo}
  \D_t v+ \l \de_{xx} v=- 2\l \de_{xx} u+2 f+ {D}_t f  \qquad\textrm{ in $\O_*$.}
  \ee
  Note here that ${D}_t f \in C^{0,\alpha}(\ov{\Omega_*})$ exists since $f \in Y$.  Hence the RHS of \eqref{eq:Dtuo} belongs to $C^{0,\alpha}(\overline{\O_*})$, and therefore, by Schauder regularity theory, it suffices to establish this equation in distributional sense. For $\psi \in C^\infty_c(\Omega_*)$ we have
  $$
  \int_{\Omega_*} v [\Delta_t+ \l \de_{xx}]\psi(t,x) d(t,x)  = \frac{d}{d\s}\Big|_{\s=1} \int_{\Omega_*}u(\s t,x)[\Delta_t+ \l \de_{xx}]\psi(t,x) d(t,x)
  $$
  where by (\ref{eq:regularity-x0-lemma-ND}), for $\sigma$ sufficiently close to $1$, 
  \begin{align*}
    \int_{\Omega_*}&u(\s t,x) [\Delta_t + \l \de_{xx}]\psi(t,x)]d(t,x)= \int_{\Omega_*}[\sigma^2 \Delta_t u(\s t,x) + \l \de_{xx}u(\s t,x)]\psi(t,x) d(t,x)\\
    &=\int_{\Omega_*}[\sigma^2 f(\s t,x)  - \lambda (\s^2-1) \de_{xx} u(\s t,x)]\psi(t,x) d(t,x) \qquad\textrm{ in $\overline \O_*$}
  \end{align*}
  and thus
  \begin{align*}
    \frac{d}{d\s}\Big|_{\s=1}
    \int_{\Omega_*}&u(\s t,x) [\Delta_t + \l \de_{xx}]\psi(t,x)]d(t,x)\\
                   &=\int_{\Omega_*}[2 f + {D}_t f]\psi(t,x) d(t,x) - \lambda \lim_{\s \to 1}\frac{\s^2-1}{\s-1}
                     \int_{\Omega_*}\de_{xx}u(\s t,x) \psi(t,x) d(t,x)\\
   &=\int_{\Omega_*}[2 f + {D}_t f-2\lambda u_{xx} ]\psi d(t,x).
  \end{align*}
  Here we used that ${D}_t f$ exists in $C^{0,\alpha}(\ov{\Omega_*})$ by assumption. Hence we have established \eqref{eq:Dtuo} in distributional sense, as required.

  Next, we note, using the identity~(\ref{computation-t-Laplace}) for radial functions in $t$,  that
 $$
{D}_t v =   {D}_t {D}_t u  = |t|^2 \Delta_t u  -(N-2){D}_t u  \qquad\textrm{ in $\overline \O_*$,}
  $$
where  
  $$
  \D_t u=- \lambda \de_{xx} u + f = f \qquad \text{on $\partial \Omega_*$,}
  $$
  since $u \equiv 0$ on $\partial \Omega_*$ by assumption. Hence 
   \be\label{eq:Dtuo-1}
   {D}_t v 
=  f- (N-2){D}_t u \qquad \text{on $\partial \Omega_*$.}
  \ee
  Here we note that ${D}_t u\big|_{\partial \O_*} \in C^{1,\alpha}(\de{\O_*})$ since $u \in C^{2,\alpha}(\ov{\Omega_*})$. Moreover, writing
  $$
  f = f_1 + f_2 \in Y\qquad \text{with $f_1 \in C^{1,\alpha}_{p,rad}(\ov{\O_*})$ and $f_2 \in X_0^D$,}
  $$
  we see that $f\big|_{\partial \O_*}= f_1 \big|_{\partial \O_*} \in C^{1,\alpha}(\de{\O_*})$ since $f_2 \big|_{\partial \O_*} \equiv 0$ by definition of $X_0^D$. Hence the RHS of \eqref{eq:Dtuo-1} belongs to $C^{1,\alpha}(\de{\O_*})$. Consequently, standard elliptic regularity, applied to the Neumann boundary value problem \eqref{eq:Dtuo}, \eqref{eq:Dtuo-1}, gives $v={D}_t u \in C^{2,\alpha}(\overline{\O_*})$, as required. \QED 
\end{proof}

\begin{Proposition}
\label{fredholm}  
For every $\lambda>0$, the operator $L_\lambda = DG_\lambda : X_2^D \to Y$ is a Fredholm operator of index zero.
\end{Proposition}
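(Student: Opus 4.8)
The plan is to isolate the zeroth order term of $L_\lambda$ and treat it as a compact perturbation of an isomorphism. Concretely, write $L_\lambda = T + K$ with $T := \Delta_t + \lambda \partial_{xx} : X_2^D \to Y$ and $K := j_m^2\,\id : X_2^D \to Y$. Both $L_\lambda$ and $K$ are bounded maps into $Y$ (the former by Proposition~\ref{sec:functional-setting-4}, the latter because $X_2^D \hookrightarrow C^{2,\alpha}_{p,rad}(\ov{\Omega_*}) \hookrightarrow C^{1,\alpha}_{p,rad}(\ov{\Omega_*}) \hookrightarrow Y$ continuously, the last inclusion having norm $\le 1$ since $\|f\|_Y \le \|f\|_{C^{1,\alpha}}$), hence so is $T = L_\lambda - K$. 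It then suffices to show that $K$ is compact and that $T$ is a topological isomorphism, since a compact perturbation of an isomorphism between Banach spaces is Fredholm of index zero.

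Compactness of $K$ is the easy part. A bounded subset of $X_2^D$ is bounded in $C^{2,\alpha}_{p,rad}(\ov{\Omega_*})$, hence --- identifying $2\pi$-periodic (in $x$) functions on $\ov{\Omega_*} = \ov{B_1}\times\R$ with functions on the compact manifold with boundary $\ov{B_1}\times(\R/2\pi\Z)$ and applying the Arzela--Ascoli theorem there --- relatively compact in $C^{1,\alpha}_{p,rad}(\ov{\Omega_*})$, and therefore relatively compact in $Y$ because the inclusion $C^{1,\alpha}_{p,rad}(\ov{\Omega_*}) \hookrightarrow Y$ is continuous. Thus $K : X_2^D \to Y$ is compact.

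For $T$ I would argue bijectivity directly. Injectivity follows from the maximum principle: if $u \in X_2^D$ and $Tu = 0$, then $u \in C^{2,\alpha}_{p,rad}(\ov{\Omega_*})$ solves $\Delta_t u + \lambda u_{xx} = 0$ in $\Omega_*$ --- a uniformly elliptic equation with no zeroth order term --- with $u = 0$ on $\partial\Omega_*$, so, viewing $u$ on $\ov{B_1}\times(\R/2\pi\Z)$, the weak maximum principle forces $u \equiv 0$. For surjectivity, take $f \in Y \subset C^{0,\alpha}_{p,rad}(\ov{\Omega_*})$ and solve the Dirichlet problem $\Delta_t u + \lambda u_{xx} = f$ in $\Omega_*$, $u = 0$ on $\partial\Omega_*$: since the homogeneous problem has only the trivial solution (as just shown), Schauder theory yields a unique $u \in C^{2,\alpha}(\ov{B_1}\times(\R/2\pi\Z))$, and by uniqueness together with the invariance of the problem under rotations in $t$ and under the reflection $x\mapsto -x$ (which respect the symmetries of $f$), this $u$ is radial in $t$ and even in $x$, i.e. $u \in C^{2,\alpha}_{p,rad}(\ov{\Omega_*})$. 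Now comes the crucial input: because $f \in Y$, Lemma~\ref{regularity-lemma-1-ND} upgrades $u$ to an element of $X_2^D$, so $Tu = f$. Hence $T : X_2^D \to Y$ is a continuous bijection between Banach spaces, and the open mapping theorem makes it a topological isomorphism. Combining, $L_\lambda = T + K$ is Fredholm of index zero.

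The only genuinely nontrivial point is the surjectivity of $T$, and within it the final step: one must know that the Schauder solution $u \in C^{2,\alpha}_{p,rad}(\ov{\Omega_*})$ of $\Delta_t u + \lambda u_{xx} = f$ actually lies in the smaller space $X_2^D$ --- that is, that $D_t u$ gains a derivative even though $u$ does not. This is precisely the loss-of-derivatives phenomenon that the tailor-made spaces of Section~\ref{sec:functional-setting} are designed to handle, and it has already been resolved in Lemma~\ref{regularity-lemma-1-ND}; granted that lemma, the remaining parts of the proof are routine.
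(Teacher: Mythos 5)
Your proof is correct and follows the same strategy as the paper: decompose $L_\lambda = T + K$ with $T := \Delta_t + \lambda\partial_{xx}$ and $K := j_m^2\,\id$, show $K$ is compact (via the compact embedding $X_2^D\hookrightarrow C^{2,\alpha}\hookrightarrow C^{1,\alpha}\hookrightarrow Y$), show $T$ is a topological isomorphism with the crucial upgrade from $C^{2,\alpha}_{p,rad}(\ov{\Omega_*})$ to $X_2^D$ supplied by Lemma~\ref{regularity-lemma-1-ND}, and conclude by stability of the Fredholm index under compact perturbation. The only deviations are cosmetic: the paper proves injectivity of $T$ by an energy (integration-by-parts) argument rather than the maximum principle, and it establishes $C^{2,\alpha}$-solvability of the Dirichlet problem by an explicit Fourier-series-plus-mollification construction rather than by invoking abstract Schauder existence theory on the compact manifold with boundary $\ov{B_1}\times(\R/2\pi\Z)$ — both choices are equivalent in substance.
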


\begin{proof}
  Consider the operator $\widetilde{L}: X_2^D \to Y$, $\widetilde{L} v= \lambda v_{xx} + \D_t v$. We show that 
  \begin{equation}
    \label{topol-iso-tilde-L}
   \text{$\widetilde{L}$ is a topological isomorphism.}  
  \end{equation}
  Once this is shown, the claim follows since the difference operator $L_\lambda- \widetilde{L}= j_m^2 \id: X_2^D \to Y$ is compact (i.e., the embedding $X_2^D \hookrightarrow Y$ is compact) and since the Fredholm property and the Fredholm index are stable under compact perturbations.
  
By the open mapping theorem, we only have to show that $\widetilde{L}$ is bijective to deduce (\ref{topol-iso-tilde-L}). To see injectivity, let $u \in X_2^D$ satisfy
  $$
  \widetilde{L}u = \lambda u_{xx} + \D_t u= 0 \qquad \text{in $\Omega_*$.}
  $$
  Multiplying this equation by $u$ and integrating by parts in $t$ and in $x$ from $0$ to $2\pi$ gives
  $$
  - \int_{\Omega_*}\Bigl( \lambda |u_x|^2 + |\nabla_t u|^2)\,d(t,x) = 0,
  $$
  hence $u$ is a constant function and therefore $u \equiv 0$ since $u \in X_2^D$.

  Next we show that $\widetilde{L}$ is onto. For this we let $f \in Y \subset C^{0,\alpha}_{p,rad}(\ov{\Omega_*})$. Since the operator $\lambda \partial_{xx} + \D_t$ is uniformly elliptic, it is well known that the problem
\begin{equation}
  \label{g-eq}
\lambda u_{xx} + \D_t u =f \quad \text{in $\Omega_*$,}\qquad u= 0 \quad \text{on $\partial \Omega_*$}
\end{equation}
  has a solution $u \in C^{2,\alpha}_{p,rad}(\ov{\Omega_*})$. Moreover, $u \in X_2^D$ and $  \widetilde{L} u = f$ by Lemma~\ref{regularity-lemma-1-ND}. This shows that $  \widetilde{L}$ is onto, and the proof is finished.
\QED
\end{proof}

\begin{Remark}\label{rem-loss}
{\rm We summarize briefly why we have introduced the particular functional analytic framework of the present section. In principle, $G_\l:C^{3,\alpha}_{p,rad}(\ov{\Omega_*})\to C^{0,\alpha}_{p,rad}(\ov{\Omega_*})$ is a well defined smooth map and $DG_\l(0)=L_\l$.   This  can be easily  seen from \eqref{eq:reldiffope-ND},  \eqref{def-M} and the definition of $h_u$ in \eqref{eq:DeffM-ND--2}.
It is however not true that $ L_\l:C^{3,\alpha}_{p,rad}(\ov{\Omega_*})\cap X_0^D\to C^{0,\alpha}_{p,rad}(\ov{\Omega_*})$ is  Fredholm operator of index zero because it is a second order elliptic operator.     This results in a loss of regularity  which we have overcome by working in the tailor made spaces $X_2^D$ and $Y$. We also notice, in particular,  that  the space $Y$ is mainly determined by the structure of the operator $L_\l^h$ and the structure of the solutions to the equation $F_\l=0$ given by  $(M_1u+u_m,1+h_u)$.}
\end{Remark}
\vspace{1ex}

\section{The Linearized operator  $D G_\lambda(0)$  and its spectral properties }\label{eq:kernel}

Let, as before, $J_{\nu}$ denote the Bessel function of the first kind of order $\nu>-1$, $I_\nu(r)= r^{-\nu} J_\nu(r)$ for $r>0$, and let 
$$
0<j_{\nu,1}<j_{\nu,2}<\dots
$$
 be the  zeros of $J_\nu$.  Recall also that we use the simplified notation $j_n:= j_{\frac{N}{2},n}$ for the special case $\nu = \frac{N}{2}$. We first consider the ODE eigenvalue problem
\begin{equation}
  \label{eq:v-k-initial-vu-ND}
- u'' -  \frac{N-1}{r}u' = - \frac{1}{r^{N-1}}(r^{N-1}u')' = \mu u \quad\text{in (0,1)}, \qquad \; u'( 0)=u( 1)=0  
\end{equation}
satisfied by radial eigenfunctions of the Dirichlet Laplacian in the unit ball. It is well known that the eigenvalues of (\ref{eq:v-k-initial-vu-ND}) are given by
$$
j^2_{N/2-1,n},\qquad n \geq 1
$$
with associated eigenfunctions
$$
r \mapsto \vp_n(r)=I_{N/2-1}(j_{N/2-1, n}\,r ).
$$
We note the important property
$$
 j_{\nu,n}<j_{\nu+1,n}<j_{\nu, n+1} \qquad \text{for $\nu>-1$, $n\geq 1$} 
$$
(see e.g. \cite[Chapter XV, 15.22]{NevWatson}). For the special case $\nu=N/2-1$, we thus get the following inequalities involving the eigenvalues of (\ref{eq:ODE-eigenvalue}) and (\ref{eq:v-k-initial-vu-ND}):
\be\label{eq:sep-zeros-Bessel}
 j_{N/2-1, n} < j_n  < j_{N/2-1,n+1} \qquad \text{for $n \geq 1$.}
\ee
As in the previous sections,  we now fix $m   \geq 1$ and consider the operator $L_\lambda =DG_\lambda(0)$ given in Proposition~\ref{sec:functional-setting-4}. Moreover, we define the value
\be \label{eq:ei}
 \l_m:= {j_{m}^2-j_{N/2-1,1}^2 }, 
 \ee
 which is positive by \eqref{eq:sep-zeros-Bessel}. The following proposition establishes key properties of the operator $L_{\lambda_m}$ which will allow us to apply the Crandall-Rabinowitz bifurcation theorem.

\begin{Proposition}\label{propCR-ND}
\begin{itemize}
\item[(i)] The kernel $N(L_{\l_m})$ of the operator
$$
L_{\l_m}: X^D_2  \to Y, \qquad L_{\l_m} v =j_{m}^2v+ \lambda_m v_{xx} + \D_t v,
$$
is one-dimensional and spanned by the function
\begin{equation}\label{eq:kernel-ND}
 v_{*}(t,x)= I_{N/2-1}(j_{N/2-1, 1}|t|)\cos( x).
\end{equation}
\item[(ii)] The image $R(L_{\l_m})$ of $L_{\l_m}$ is given by 
  $$
  R(L_{\l_m} )= \left \lbrace  w\in Y: \int_{\O_* } v_{*}(t,x)w(t,x)\,dxdt=0\right\rbrace.
$$
\item[(iii)] We have the transversality property 
\begin{equation}
  \label{eq:transversality-cond4-ND}
\partial_\lambda \Bigl|_{\lambda=\l_{m} }L_\lambda v_{*}\not  \in \; R(L_{\l_m} ).\\
\end{equation}
\end{itemize}
\end{Proposition}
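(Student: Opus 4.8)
The plan is to separate variables in the $x$-direction, exploiting that every function in $X_2^D$ and in $Y$ is $2\pi$-periodic and even in $x$, hence admits a cosine-Fourier expansion $v(t,x) = \sum_{\ell\ge 0} v_\ell(t)\cos(\ell x)$ with radial coefficients $v_\ell \in C^{2,\alpha}_{rad}(\ov{B_1})$ (respectively in the appropriate coefficient space coming from $Y$). Applying $L_{\lambda_m}$ decouples the modes: $L_{\lambda_m}v = \sum_\ell \bigl(\Delta_t v_\ell + (j_m^2 - \lambda_m \ell^2) v_\ell\bigr)\cos(\ell x)$. So $v \in N(L_{\lambda_m})$ forces, for each $\ell$, that $v_\ell$ is a radial solution of $\Delta_t v_\ell + (j_m^2 - \lambda_m\ell^2) v_\ell = 0$ in $B_1$ with $v_\ell = 0$ on $\partial B_1$; i.e. $v_\ell$ is a radial Dirichlet eigenfunction for eigenvalue $j_m^2 - \lambda_m\ell^2$, or $v_\ell \equiv 0$. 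By the classification recalled before the proposition, $j_m^2 - \lambda_m\ell^2$ must equal $j_{N/2-1,n}^2$ for some $n\ge 1$. For $\ell = 1$ we have by the definition $\lambda_m = j_m^2 - j_{N/2-1,1}^2$ that $j_m^2 - \lambda_m = j_{N/2-1,1}^2$, giving $v_1(r) = c\,I_{N/2-1}(j_{N/2-1,1} r)$. For $\ell = 0$ we would need $j_m^2$ itself to be a radial Dirichlet eigenvalue $j_{N/2-1,n}^2$, which is impossible since $j_m = j_{N/2,m}$ and the Bessel zeros interlace strictly (inequality~\eqref{eq:sep-zeros-Bessel}); for $\ell \ge 2$ one checks $j_m^2 - \lambda_m\ell^2 < j_{N/2-1,1}^2 \le j_{N/2-1,n}^2$ for all $n$, and in fact can be negative, so no nonzero eigenfunction exists (here one also uses that the radial Dirichlet eigenvalues are exactly the $j_{N/2-1,n}^2$, all positive). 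This isolates $\ell=1$ and yields (i); I must also verify that $v_*$ as in \eqref{eq:kernel-ND} indeed lies in $X_2^D$, which is clear since it is smooth, radial in $t$, vanishes on $\partial\Omega_*$, and $D_t v_* = |t| I_{N/2-1}'(j_{N/2-1,1}|t|) j_{N/2-1,1}\cos x$ is again $C^{2,\alpha}$.

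For (ii), since $L_{\lambda_m}$ is Fredholm of index zero by Proposition~\ref{fredholm} and its kernel is one-dimensional, its range is a closed subspace of codimension one in $Y$. It therefore suffices to exhibit one bounded linear functional on $Y$ vanishing on $R(L_{\lambda_m})$ and not identically zero, and show the range is contained in its kernel; codimension count then forces equality. The natural candidate is $w \mapsto \int_{\Omega_*} v_* w\,d(t,x)$ (an integral over one period in $x$), which is bounded on $Y$ because $Y \embed C^{0,\alpha}_{p,rad}(\ov{\Omega_*})$. To see $R(L_{\lambda_m})$ lies in its kernel, take $w = L_{\lambda_m}v$ and integrate by parts: $\int_{\Omega_*} v_* L_{\lambda_m} v = \int_{\Omega_*} (L_{\lambda_m} v_*) v = 0$, using that $v_*, v$ both vanish on $\partial\Omega_*$ so the boundary terms from $\Delta_t$ drop, the $x$-integration by parts is boundary-free by periodicity, and $L_{\lambda_m}$ is formally self-adjoint. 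That the functional is nonzero is immediate since $\int_{\Omega_*} v_*^2 > 0$ would be the pairing against $w$ having a $\cos x$ component — more simply, the functional is nonzero because $v_*$ itself is a nonzero continuous function. Hence $R(L_{\lambda_m})$ equals the stated hyperplane.

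For (iii), I compute $\partial_\lambda L_\lambda v_* = \partial_{xx} v_* = -v_*$ since the only $\lambda$-dependence of $L_\lambda$ is the coefficient of $\partial_{xx}$, and $v_*(t,x) = I_{N/2-1}(j_{N/2-1,1}|t|)\cos x$ satisfies $\partial_{xx}v_* = -v_*$. By part (ii), this function lies in $R(L_{\lambda_m})$ if and only if $\int_{\Omega_*} v_*\cdot(-v_*)\,d(t,x) = 0$, i.e. $\int_{\Omega_*} v_*^2\,d(t,x) = 0$, which is false since $v_*\not\equiv 0$. Hence $\partial_\lambda|_{\lambda=\lambda_m} L_\lambda v_* \notin R(L_{\lambda_m})$, establishing the transversality condition. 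The main obstacle in this argument is part (i): one must argue carefully that, working in the nonstandard space $X_2^D$ rather than a classical Hölder space, the only Fourier modes contributing to the kernel are those for which the separated ODE has a genuine radial Dirichlet eigenfunction — this requires knowing that the coefficient functions of elements of $X_2^D$ satisfy the correct boundary and regularity conditions (which follows from the definition of $X_2^D$ and the decoupling of $L_{\lambda_m}$ across modes, together with elliptic regularity for each mode) and that the interlacing inequality \eqref{eq:sep-zeros-Bessel} rules out $\ell = 0$ while the sign/size estimate rules out $\ell \ge 2$. Once mode separation is justified, parts (ii) and (iii) are short.
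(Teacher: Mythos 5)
Your proposal is correct and follows essentially the same route as the paper's proof: expand elements of the kernel in cosine--Fourier modes in $x$, reduce to the radial ODE eigenvalue problem on $(0,1)$ for each mode, and use the strict interlacing of Bessel zeros in \eqref{eq:sep-zeros-Bessel} to isolate the mode $\ell=1$ (the paper packages the $\ell\ge2$ exclusion slightly more tersely via $\ell^2\lambda_m\le\lambda_m$, but the monotonicity argument is the same as yours); parts (ii) and (iii) are then handled exactly as you do, by the Fredholm index-zero property plus the self-adjointness integration by parts and the computation $\partial_\lambda L_\lambda v_*=\partial_{xx}v_*=-v_*$.
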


\begin{proof}
  (i) Let $v \in N(L_{\l_m})$. So $v\in X_2^D$ satisfies $L_{\l_m} v = 0$, i.e., 
\begin{align}\label{eq. for U-ND} 
\D_tv +\lambda_m v_{xx}  + j_m^2  v=0 \qquad \text{in $\Omega_*$}
\end{align}
together with the condition
\begin{align}\label{eqbounU-ND}
v=0 \qquad \text{on $\partial \Omega_*$.}
\end{align}
By elliptic regularity theory, $v\in C^\infty(\ov{\O_*})$. Since, moreover, the function $v$ is radial in $t$ and $2\pi$ periodic  and even in $x$, we have a Fourier series expansion $v(t,x)=\sum_{\ell=0}^\infty v_\ell(|t|)\cos(\ell x)$ in the $x$-variable, where the $|t|$-dependent Fourier coefficients are given by    
$
v_\ell(|t|):= \frac{1}{\sqrt{2\pi}}\int_{0}^{2\pi} v(t,x)\cos (\ell x)\,dx.
$
Since the Fourier series expansion given above holds in $C^2$-sense, we may multiply  \eqref{eq. for U-ND}  and \eqref{eqbounU-ND} with $\cos (\ell x)$ and integrate in the $x$-variable from $0$ to $2\pi$ to get 
\begin{equation}
  \label{eq:ODE-ev-ell}
-v_\ell'' -\frac{N-1}{r} v_\ell' =(j_m^2-\ell^2 \lambda_m) v_\ell  \quad\textrm{on (0,1)}, \qquad v_\ell'(0)=v_\ell( 1)=0  .
\end{equation}
If $v_\ell \not = 0$, then, by the remarks in the beginning of this section, we must have $j_m^2- \ell^2\lambda_m= j^2_{N/2-1,k}$,  for some $k\geq 1$, and therefore      
\begin{align}\label{condikerne}
\ell^2 ({j_{m}^2-j_{N/2-1,1}^2 } )=\ell^2 \lambda_m =  j_m^2- j^2_{N/2-1,k}.
 \end{align}
 By \eqref{eq:sep-zeros-Bessel} (applied with $m=n$), this is only possible if $k=1$ and $\ell = 1$, and in this case the space of solutions of (\ref{eq:ODE-ev-ell}) is spanned by 
$$
r \mapsto I_{N/2-1}(r j_{N/2-1, 1}).
$$
Consequently, $v= A v_*$ with some constant $A \in \R$ and $v_*$ given in (\ref{eq:kernel-ND}). It is also easily seen that, with this definition, $v_*$ satisfies $L_{\l_m}v_* =0$.  Thus (i) is proved.

To prove (ii), we first note that $R(L_{\l_m})$ has codimension one by (i) and since $L_{\l_m}$ is Fredholm of index zero by Proposition~\ref{fredholm}. Hence we only need to prove that 
$$
R(L_{\l_m}) \subset \left \lbrace  w\in Y: \int_{\O_* } v_{*}(t,x)w(t,x)\,dxdt=0\right\rbrace.
$$
This follows immediately since, via integration by parts,
$$
\int_{\O_* } v_{*} L_{\l_m} u \,dxdt= \int_{\O_* } u L_{\l_m} v_{*} \,dxdt = 0 \qquad \text{for every $u \in X^D_2.$}
$$
Finally, (iii) follows from (ii) and the fact that   
$$
\de_\l\big|_{\l=\l_{m}}DG_\l(0) v_{*}= \de_{xx} v_{*}=-v_{*}.
$$
   \QED
\end{proof}

\section{Proof of Theorem \ref{Theo1-ND}  }\label{eq:ProofTheo1-ND}

As before, we fix $m \geq 1$, and we let $G_\lambda$ be defined as in \eqref{eq:DeffGl-ND}.
The proof of Theorem  \ref{Theo1-ND}  is obtained by an application
of the Crandall-Rabinowitz Bifurcation theorem to solve the equation
\begin{align}\label{eq:maptsolvG}
 G_\lambda(u)=0,
\end{align}
This application gives, in a first step, the following result. 

\begin{Theorem}\label{Theo1-general-ND}
There  exist  ${\e}>0$ and a smooth curve
$$
(-{\e},{\e}) \to   (0,+\infty) \times X_2^D ,\qquad s \mapsto (\lambda(s) ,\varphi_{s})
$$
with
\be
G_{\l(s)}(\varphi_{s}) =0.
\ee
Moreover, $\lambda(0)= \l_m$ (see \eqref{eq:ei}) and 
$$
\varphi_{s} = s(v_{*}+\o_{s}) \qquad \text{for $s \in (-{\e},{\e})$,}
$$
with $v_{*}$ given in (\ref{eq:kernel-ND}) and a smooth curve 
$$
(-{\e},{\e}) \to X_2^D, \qquad s \mapsto \o_{s}
$$
satisfying  $\o_{0} =0$ and 
\begin{align*}
\int_{\O_*}  \o_{s} (\t,x)  v_{*}(\t,x)\,dxd\t=0 \qquad \text{for $s \in (-{\e},{\e})$.}
\end{align*}
\end{Theorem}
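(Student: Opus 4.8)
The plan is to apply the Crandall--Rabinowitz bifurcation theorem (see \cite{M.CR}) to the map $(\lambda,u) \mapsto G_\lambda(u)$, regarded as a map from $(0,\infty) \times \cU$ into $Y$, where $\cU \subset X_2^D$ is the open neighborhood of $0$ introduced in \eqref{eq:DeffGl-ND}. All the hypotheses of that theorem have in fact already been verified in the preceding sections, so the proof of Theorem~\ref{Theo1-general-ND} is essentially a matter of collecting them. First I would recall that $G_\lambda$ is of class $C^\infty$ by Proposition~\ref{sec:functional-setting-4}, and that the trivial branch is present: $G_\lambda(0) = 0$ for all $\lambda > 0$ by \eqref{eq:trivial-branch}. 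Next, by Proposition~\ref{sec:functional-setting-4} we have $D_u G_\lambda(0) = L_\lambda$, and by Proposition~\ref{fredholm} this operator is Fredholm of index zero for every $\lambda > 0$, so the Fredholm hypothesis is in place.

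The core spectral input comes from Proposition~\ref{propCR-ND} applied at the distinguished value $\lambda = \lambda_m = j_m^2 - j_{N/2-1,1}^2$. Part (i) gives that $N(L_{\lambda_m})$ is one-dimensional, spanned by $v_* (t,x)= I_{N/2-1}(j_{N/2-1,1}|t|)\cos x$; part (ii) identifies the range $R(L_{\lambda_m})$ as the codimension-one subspace of $Y$ consisting of functions $L^2$-orthogonal to $v_*$ over $\Omega_*$; and part (iii) is precisely the transversality condition $\partial_\lambda|_{\lambda = \lambda_m} L_\lambda v_* \notin R(L_{\lambda_m})$, which was shown via the identity $\partial_\lambda|_{\lambda=\lambda_m} D G_\lambda(0) v_* = \partial_{xx}v_* = -v_*$. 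These are exactly the kernel-dimension, range-codimension, and transversality hypotheses of the Crandall--Rabinowitz theorem.

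Granting these, the theorem yields $\e > 0$ and a $C^\infty$ curve $s \mapsto (\lambda(s),\varphi_s) \in (0,\infty) \times X_2^D$ with $\lambda(0) = \lambda_m$, $\varphi_0 = 0$, $G_{\lambda(s)}(\varphi_s) = 0$, and, in the standard normalization coming from a choice of topological complement of $\spann\{v_*\}$ in $X_2^D$, the representation $\varphi_s = s(v_* + \omega_s)$ with $s \mapsto \omega_s$ a $C^\infty$ curve in $X_2^D$, $\omega_0 = 0$. To obtain the stated orthogonality $\int_{\Omega_*}\omega_s v_*\,dx\,dt = 0$, I would simply arrange the complement in the Crandall--Rabinowitz splitting to be the $L^2(\Omega_*)$-orthogonal complement of $v_*$ within $X_2^D$ (note this complement is closed in $X_2^D$ since the functional $u \mapsto \int_{\Omega_*} u v_*$ is bounded on $X_2^D$), so that $\omega_s$ lies in it by construction; positivity of $\lambda(s)$ for small $s$ follows from continuity and $\lambda(0) = \lambda_m > 0$.

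The only genuine content beyond quoting \cite{M.CR} is making sure the abstract theorem is being applied in a legitimate functional-analytic setting --- i.e., that $X_2^D$ and $Y$ really are Banach spaces, that $\cU$ is an open neighborhood of $0$, that $G_\lambda$ maps into $Y$ and is smooth jointly in $(\lambda,u)$, and that the bounded linear functional $u \mapsto \int_{\Omega_*} u v_*$ defining the complement is well-defined on $X_2^D$ (it is, since $X_2^D \hookrightarrow C^{0,\alpha} \hookrightarrow L^2$). All of this has been arranged in Sections~\ref{sec:functional-setting}--\ref{eq:kernel}, precisely to circumvent the loss of derivatives noted in Remark~\ref{rem-loss}; once that framework is accepted, there is no real obstacle here and the proof is short. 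The harder work --- translating the abstract solution $\varphi_s$ back into the statement of Theorem~\ref{Theo1-ND}, namely recovering $h_s$ via $h_{s} = h_{\varphi_s}$, computing the expansions \eqref{eq:h-s-expansion}-type asymptotics and the explicit constants $\mu_0,\kappa,\beta,\gamma$, and checking regularity/positivity of $h_s$ --- is deferred to the subsequent application of Theorem~\ref{Theo1-general-ND} in Section~\ref{eq:ProofTheo1-ND}.
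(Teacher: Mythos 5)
Your proof is correct and follows the same route as the paper: you invoke the Crandall--Rabinowitz theorem using \eqref{eq:trivial-branch}, Proposition~\ref{sec:functional-setting-4}, Proposition~\ref{fredholm}, and Proposition~\ref{propCR-ND}, and you identify the complement in the splitting as the $L^2$-orthogonal complement of $v_*$ in $X_2^D$ to get the stated orthogonality of $\omega_s$. The only cosmetic difference is that the paper applies the theorem to the shifted map $(\lambda,u)\mapsto G_{\lambda+\lambda_m}(u)$ on $(-\lambda_m,\infty)\times\cU$ so that bifurcation occurs at the parameter value $0$, matching the usual statement of the theorem, whereas you apply it directly at $\lambda=\lambda_m$.
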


\begin{proof}
  The claim is a direct consequence of the Crandall-Rabinowitz Theorem (see \cite[Theorem 1.7]{M.CR}) applied to the map
  $$
  (-\l_m,\infty) \times \calU \to Y,\qquad (\lambda,u) \mapsto  G_{\lambda+\l_m}(u)
  $$
  The assumptions of \cite[Theorem 1.7]{M.CR} are satisfied by (\ref{eq:trivial-branch}), Proposition~\ref{sec:functional-setting-4} and Proposition \ref{propCR-ND}.
\QED \end{proof}

\subsection*{Proof of Theorem \ref{Theo1-ND} (completed)}
\label{sec:proof-theor-refth}
We continue using the notation from Theorem~\ref{Theo1-general-ND}.
Recalling  \eqref{eq:Eqtosolve-ND}, we see that, since  $G_{\lambda(s)} (\vp_{s})=0$ for every  $s \in (-{\e},{\e})$, the function 
\begin{align}
 \ti u_s ( \t,x)&:= u_m(\t,x) + [M_1   \varphi_{s}]({\t},x) \nonumber\\
  &= u_m(\t,x) + \varphi_{s}({\t},x)-g(|t|)h_{\vp_{s}}(x)  \label{eq:Solutionfinal} 
\end{align}
solves  (\ref{eq:equivalence-F-ND}) with
\begin{equation}
  \label{eq:def-h-v-phi-s}
h_{\vp_{s}}(x) = \frac{I_{N/2-1}(j_{m})}{j_m^2 I_{N/2-1}''(j_{m})}{D}_t \vp_{s}(e_1,x)
\end{equation}
and $g(r)=r U_m'(r)$ as defined in (\ref{eq:DeffM-ND--2}) and (\ref{eq:def-g_n}). Consequently, by the scaling and transformation properties discussed in the beginning of Section~\ref{eq.setting}, the function 
$$
(t,x )\mapsto w_s(t,x)=\ti u_s(h_{s}(x)t,x)
$$
solves \eqref{eq:solved-main-ND} with $\mu_s=\frac{j^2_{m}}{\l(s)} $ and 
$$
h_{s}(x)=\frac{1+h_{\varphi_{s}}(x)}{\sqrt{\l(s)}} \qquad \text{for $x \in \R$.}
$$
Moreover, by Theorem~\ref{Theo1-general-ND},
\begin{equation}
  \label{eq:value-lambda-0}
\l(0)= \l_m= j_{m}^2-j_{N/2-1,1}^2 
\end{equation}
and
\begin{equation}
  \label{eq:varphi-s-exp}
\varphi_{s}(t,x)= s v_*(t,x)+ o(s)= s I_{N/2-1}(j_{N/2-1,1}|t|)\cos( x)+ o(s),
\end{equation}
where $o(s) \to 0$ in $C^2$-sense in $\ov{\Omega_*}$ as $s \to 0$. Hence, by (\ref{eq:derrec}),  
\begin{equation*}
  \begin{split}
    D_t \varphi_{s}(e_1,x)&= s j_{N/2-1,1} I_{N/2-1}'(j_{N/2-1,1})\cos( x)+ o(s)\\
    &=- s j_{N/2-1,1}^2 I_{N/2}(j_{N/2-1,1})\cos( x)+ o(s),
  \end{split}
\end{equation*}
where $o(s) \to 0$ in $C^1$-sense in $\ov{\Omega_*}$ as $s \to 0$. Using this, we find that 
\begin{equation}
\begin{split}
  h_{\varphi_{s}} (x)&= \frac{I_{N/2-1}(j_{m})}{j_m^2 I_{N/2-1}''(j_{m})} D_t \varphi_{s}(e_1,x)\\
                  &= - s\, \frac{j_{N/2-1,1}^2I_{N/2-1}(j_{m})  I_{N/2}(j_{N/2-1,1}) }{ j^2_{m}I_{N/2-1}''(j_{m})}\cos( x)+ o(s) \qquad \text{as $s \to 0$}
\end{split}
  \label{eq:varphi-s-der-exp}
                    \end{equation}
and therefore, by (\ref{eq:value-lambda-0}), 
\begin{align*}
  h_{s}(x)&=\frac{1+h_{\varphi_{s}}}{\sqrt{\l(s)}}= \frac{1}{\sqrt{\l(s)}} -  s\frac{j_{N/2-1,1}^2 I_{N/2-1}(j_{m}) I_{N/2}(j_{N/2-1,1}) }{\sqrt{\l(s)} j^2_{m} I_{N/2-1}''(j_{m})}\cos( x)+ o(s)\\
&=  \frac{1}{j_{m}} \sqrt{\mu_s} - s\frac{j_{N/2-1,1}^2 I_{N/2-1}(j_{m}) I_{N/2}(j_{N/2-1,1}) }{j^2_{m} I_{N/2-1}''(j_{m})\sqrt{j_{m}^2-j_{N/2-1,1}^2 }}\cos( x)+ o(s) \qquad \text{as $s \to 0$}.
\end{align*}
Finally, by (\ref{eq:Solutionfinal}), (\ref{eq:def-h-v-phi-s}), (\ref{eq:varphi-s-exp}) and (\ref{eq:varphi-s-der-exp}), 
\begin{align}
&w_s(\frac{t}{h(s)},x)= \ti u_s ( \t,x) = u_m(\t,x) + \varphi_{s}({\t},x) - g(|t|)h_{\vp_{s}}(x) \nonumber\\
&=U_m(|t|)  + s \Bigl( I_{N/2-1}(j_{N/2-1,1}|t| )  + \frac{ j_{N/2-1,1}^2 I_{N/2-1}(j_{m}) I_{N/2}(j_{N/2-1,1}) }{j_m^2 I_{N/2-1}''(j_{m})} g(|t|) \Bigr)\cos (x) + o(s) \nonumber\\ 
  &=U_m(|t|) + s \Bigl( I_{N/2-1}(j_{N/2-1,1}|t|)+ \frac{j_{N/2-1,1}^2 I_{N/2-1}(j_{m})  I_{N/2}(j_{N/2-1,1}) }{j_m^2 I_{N/2-1}''(j_{m})}|t|U_m'(|t|) \Bigr)\cos (x) + o(s),\nonumber
\end{align}
where $o(s) \to 0$ in $C^1$-sense on $\Omega_*$.
We thus have proved Theorem~\ref{Theo1-ND} with the constants
\begin{align*}
\mu_0& =\frac{j_m^2}{\lambda(0)}=\frac{j_m^2}{\lambda_m}= \frac{j_m^2}{j_m^2 - j_{N/2-1,1}^2},\qquad \quad   \kappa = \frac{1}{j_{m}}= \frac{1}{j_{N/2,m}},\\
\beta &=- \frac{j_{N/2-1,1}^2 I_{N/2-1}(j_{N/2,m}) I_{N/2}(j_{N/2-1,1}) }{j_{N/2,m}^2 I_{N/2-1}''(j_{N/2,m})\sqrt{j_{N/2,m}^2-j_{N/2-1,1}^2 }},\\
\gamma&=- \frac{j_{N/2-1,1}^2 I_{N/2-1}(j_{N/2,m})  I_{N/2}(j_{N/2-1,1}) }{j_{N/2,m}^2 I_{N/2-1}''(j_{N/2,m})}
\end{align*}
and with the function
$$
t \mapsto \phi_1(|t|)= I_{N/2-1}(j_{N/2-1,1}|t|)
$$
which is a Dirichlet eigenfunction of $-\Delta$ on $B_1$ associated with the first eigenvalue $j_{N/2-1,1}^2$.
\QED

\section{The Schiffer problem on $S^2$}
\label{sec:schiffer-problem-s2}
In this section, we study problem $(\textrm{N}_\mu)$ for a family of subdomains of $S^2$, and we prove Theorem~\ref{Theo1-ND-S2}.   
As before for the case $N=2$, we set
$$
\O_*:=  (-1,1) \times \R.
$$
Our aim is to find constants $\mu>0$ and nonconstant functions $h \in \cP^{2,\alpha}_{S^2}(\R)$ with the property that the overdetermined problem (\ref{h-Neu-over-S2}) admits a nontrivial solution, where $\cP^{2,\alpha}_{S^2}(\R)$ is defined in (\ref{eq:def-p-s-2}) and $\widetilde \Omega_{h}$ is defined in (\ref{eq:Pertsphere-1}).  It is convenient to pull back problem (\ref{h-Neu-over-S2}), for given $h \in \cP^{2,\alpha}_{S^2}(\R)$ to the fixed domain $\Omega_* \subset \R^2$ by means of the parametrization  
$$
\Upsilon_h: \Omega_* \to S^2, (t,x) \mapsto \left(\cos x \cos \bigl(h(x)t\bigr),\sin x \cos \bigl(h(x)t\bigr), \sin \bigl(h(x)t\bigr) \right).
$$
The metric in the coordinates  $(t,x)$ is then given by 
\begin{equation}\label{eq:me}
g_{h}=h^2(x) \textrm{d}t^2 + \Bigl(\cos^2 \bigl(h(x)t) + \bigl(t h'(x)\bigr)^2 \Bigr) d x^2 + t h(x)  h'(x)\textrm{d}t dx.
\end{equation}
Consequently, the Gram determinant and inverse metric tensor are given by
$|g_h(t,x)|= h^2(x) \cos^2\bigl(h(x)t\bigr)$ and
$$
(t,x) \mapsto \frac{1}{\Bigl(h^2(x) \cos^2\bigl(h(x)t\bigr)\Bigr)}
\Bigl[\Bigl(\cos^2 \bigl(h(x)t) + \bigl(t h'(x)\bigr)^2 \Bigr) \textrm{d}t^2 + h^2(x)  d x^2 - t h(x) h'(x)\textrm{d}t dx\Bigr].
$$
Thus the corresponding Laplace-Beltrami operator of $S^{2}$ writes in these coordinates as 
\begin{align*} 
\Delta_{g_h}u &= \Bigl[\frac{1}{\sqrt{|g|}}\partial_i \Bigl(\sqrt{|g|}g^{ij}\partial_j u \Bigr)\Bigr]\\
  &=\frac{1}{h(x) \cos \bigl(h(x)t\bigr)}\Bigl[\partial_t \Bigl( \Bigl(\frac{\cos \bigl(h(x)t\bigr)}{h(x)} + \frac{\bigl(t h'(x)\bigr)^2}{h(x) \cos \bigl(h(x)t\bigr)} \Bigr)\partial_tu \Bigl)\\
  &+ \partial_{x}  \Bigl(\frac{h(x)}{\cos \bigl(h(x)t\bigr)}\partial_x u\Bigl)
- \partial_t  \Bigl(\frac{t h'(x)}{\cos \bigl(h(x)t\bigr)}\partial_x u\Bigl) - \partial_x \Bigl(\frac{t h'(x)}{\cos \bigl(h(x)t\bigr)}\partial_t u\Bigl) \Bigr]\\
&=\frac{1}{h^{2}(x)}\partial_{tt}u-\frac{1}{h(x)} \tan \bigl(h(x)t\bigr) \partial_tu \nonumber\\
           &+\frac{1}{\cos^2 \bigl(h(x)t\bigr)}\Bigl( \partial_{xx}u  + \frac{h'(x)^2}{h^2(x)}t^2 \partial_{tt}u - 2 \frac{h'(x)}{h(x)}t    {\partial}_{t} \partial_{x}u + \Bigl(2 \frac{(h'(x))^2}{h^2(x)} -\frac{h''(x)}{h(x)}\Bigr) t \partial_tu\Bigr)  .
\end{align*}
Hence (\ref{h-Neu-over-S2}) is equivalent to the problem
\begin{equation}\label{h-Neu-over-S2-pull-ball}
  \left \{
    \begin{aligned}
\cL_\mu^h w &=  0 && \qquad \text{in $\Omega_{*}$,}\\
             w&=1 &&\qquad \text{on $\partial  \Omega_{*}$,}\\
              \partial_t w  &=0 &&\qquad \text{on $\partial \Omega_{*}$}
    \end{aligned}
       \right.
\end{equation}
with
\begin{equation}
  \label{eq:def-L-mu-h-sphere}
\cL_\mu^h = \Delta_{g_h} + \mu \id   .
\end{equation}
To deal with this problem, we introduce very similar function spaces as in Section~\ref{sec:functional-setting}. For fixed $\a\in (0,1)$ and $k \in \N \cup \{0\}$, we define  $C^{k,\a}_{p}(\overline \Omega_*)$ the subspace of  functions in $C^{k,\a}(\overline \Omega_*)$  which are even in each of  their  variables and periodic in their last variable.  We also define
$$
\cX_k:= \{ u \in C^{k,\a}_{p}(\overline \Omega_*)\::\:  \de_\t u \in C^{k,\alpha}(\overline \Omega_*) \},
$$
endowed with the norm $u \mapsto \|u\|_{k}:= \|u\|_{C^{k,\alpha}}+\|{\de}_\t u \|_{C^{k,\alpha}}$, and we consider the closed subspaces
$$
\cX_k^D:=\{u \in \cX_k \::\: \text{$u= 0$ on $\partial \Omega_*$}\},
$$
and
$$
\cX_k^{DN}:=\{u \in \cX_k \::\: \text{$u={\de}_\t u = 0$ on $\partial \Omega_*$}\},
$$
both also endowed with the norm $\|\cdot\|_k$. Moreover, we consider the Banach space
$$
\cY:= C^{1,\alpha}_{p}(\overline \Omega_*) + \cX_0^D \; \subset \; C^{0,\alpha}_{p}(\overline \Omega_*),
$$
endowed with the norm
$$
\|f\|_{\cY}:= \inf \Bigl \{\|f_1\|_{C^{1,\alpha}} + \|f_2\|_{0} \::\: f_1 \in C^{1,\alpha}_{p}(\overline \Omega_*), \; f_2 \in \cX_0^D,\; f= f_1 + f_2 \Bigr\}.
$$
Next, for $\l\in [0,\frac{\pi}{2})$,  we  let $\mu(\lambda)$ denote the first positive eigenvalue of the eigenvalue problem
  \begin{equation}
    \label{eq:scalar-eigenvalue}
    \left\{
      \begin{aligned}
        &-\partial_{tt} u+ \lambda \tan (\lambda t)  \partial_tu=-\frac{1}{\cos(t\l)}\de_t(\cos(t\l)\de_tu) = \mu u \quad \text{in $(0,1)$,}\\
        &\qquad u'(0)=u'(1)=0.     
    \end{aligned}
\right.
\end{equation}
It is well known that $\mu(\lambda)$ is a simple eigenvalue which is given variationally as
\be\label{eq:var-char-mu-Neum}
 \mu(\lambda)=\inf_{u\in H^1(0,1), \int_0^1 u(t)  \cos(t\l)dt=0 } \frac{ \int_{0}^{1} (u')^2\cos(t\l)  dt    }{   \int_{0}^{1} u^2\cos(t\l)  dt  }. 
\ee
Hence $\mu(\lambda)$ admits a unique eigenfunction $U_\lambda$ satisfying the normalization condition 
  $$
 U_\lambda(1)= 1, 
  $$
and $U_\lambda$ changes sign only once on $(0,1)$. Moreover, the maps
  $$
  (-1,1) \to \R,\quad \lambda \mapsto \mu(\lambda),\quad \qquad (-1,1) \to C^{2,\alpha}([0,1]),\quad  \l \mapsto  U_\l
  $$
  are of class $C^2$ (see e.g. \cite{Kong-Zettl}).
  
  For matters of convenience, in the following we identify, for $\lambda \in (0,\frac{\pi}{2})$, the function $U_\lambda$ with its unique extension on the whole intervall $[0,\frac{\pi}{2\l})$ given as the solution of (\ref{eq:scalar-eigenvalue}) with $U_\lambda(1)= 1$. We also define
    $$
    u_\lambda \in C^{\infty}\bigl((-\frac{\pi}{2\l},\frac{\pi}{2\l}) \times \R\bigr), \qquad u_\lambda(t,x) = U_\lambda(|t|).
    $$   
    and we identify $u_\lambda$ with its restriction to $\overline{\Omega_*}$.   For $\lambda \in (0,1)$ and
    $$
    h \in \cU_{S^2}:= \Bigl\{ \tilde h \in C^{2,\alpha}_{p}(\R) \::\: \tilde h > \frac{2}{\pi}-1 \Bigr\},  
$$
we now consider the operator
  \begin{equation}
  \label{eq:def-L-lambda-h-sphere}
  \begin{split}
  u \mapsto L_\lambda^hu &:= \lambda^2 \cL_{\mu(\lambda)/\lambda^2}^{\lambda/ (1+ h)}u =
\mu(\lambda) u  + (1 +h)^2\partial_{tt} u- \lambda(1+ h) \tan \frac{\lambda t}{(1+h)}  \partial_tu \\
  &+\frac{\lambda^2}{\cos^2 \frac{\lambda  t}{(1+h)}}\Bigl( \partial_{xx}u  + \frac{h'^2}{(1+h)^2} t^2 \partial_{tt}u   + 2 \frac{h'}{1+h}t    {\partial}_{t} \partial_{x}u +  \frac{h''}{1+h} t \partial_t u \Bigr)     .
  \end{split}
\end{equation}
Here, in the second equality, we used the identities
$$
\Bigl(\frac{1}{1+h}\Bigr)' = - \frac{h'}{(1+h)^2},\qquad \Bigl(\frac{1}{1+h}\Bigr)'' = 2 \frac{(h')^2}{(1+h)^3}-\frac{h''}{(1+h)^2}.
$$
Moreover, for $\lambda \in (0,1)$ we define
\begin{equation}\label{eq:MappinmgF-ND-S2}
  F_{\lambda}:   \cX_2^{DN} \times \cU_{S^2} \to \cY, \qquad F_{\lambda}(u, h) = L_\lambda^h (u+u_{\lambda}).
  \end{equation}
By construction, we then have the following:
\begin{equation}
  \label{eq:F-equivalence}
  \begin{split}
    &\textit{If $F_{\lambda}(u, h)=0$, then the function $\tilde u = u_\lambda + u$ solves the problem (\ref{h-Neu-over-S2-pull-ball})}\\
    &\textit{with $h$  replaced by $\frac{\lambda}{1+h}$ and $\mu = \mu(\lambda)/\lambda^2$.}
  \end{split}
\end{equation}
To see that indeed $F_\lambda$ is well-defined by (\ref{eq:MappinmgF-ND-S2}), we argue similarly as in the proof of Lemma~\ref{sec:functional-setting-1} by writing
$F_{\lambda} = F_{\lambda}^1 + F_{\lambda}^2$ with
$$
F_\lambda^1: \cX_2^{DN} \times \cU_{S^2} \to C^{1,\alpha}_{p}(\overline \Omega_*), \qquad \text{and}\qquad F_\lambda^2: \cX_2^{DN} \times \cU_{S^2} \to \cX_0^D
$$
given by 
\begin{align*}
F_\lambda^1(u,h)&=(1 +h)^2 \partial_{tt} u- \lambda (1+ h) \tan \frac{\l t}{ 1+h}  \partial_tu \\
  &+\frac{\lambda^2}{\cos^2 \frac{\lambda t}{(1+h)}}\Bigl(\frac{(h')^2}{(1+h)^2} t^2 \partial_{tt}u  + 2 \frac{h'}{1+h}t    {\partial}_{t} \partial_{x}u \Bigr),\\ 
F_\lambda^2(u,h)&=\mu(\lambda) u +\frac{\lambda^2}{\cos^2 \frac{\lambda t}{(1+h)}}\Bigl( \partial_{xx}u +  \frac{h''}{(1+h)} t \partial_t u\Bigr).
\end{align*}
Next, similarly as in Section~\ref{sec:functional-setting}, we wish to eliminate the variable $h$. For this we define the linear map
\begin{equation}
\label{def-M-S2}
M: \cX_2^D \to \cX_2^{DN} \times  C^{2,\alpha}_{p}(\R),\qquad  M u = (M_1 u,h_u)
\end{equation}
with
\begin{equation}\label{eq:DeffM-ND--2-S2}
h_u(x)= \frac{{\de}_t u (1,x)}{U_\l''(1)}, \qquad x \in \R
\end{equation}
and
\begin{equation}\label{eq:DeffM-ND--1-S2}
[M_1 u](t,x)= u(t,x) - t U_\l'(|t|) h_u(x), \qquad (t,x) \in \Omega_*.
\end{equation}
By construction, the linear map $M$ is well defined by (\ref{def-M-S2}). We then define 
\begin{equation}\label{eq:DeffGl-ND-S2}
G_\lambda: \cU  \to \cY,\qquad G_\lambda = F_\lambda \circ M
\end{equation}
with the open subset $\cU:= \left\{u \in \cX_2^D\::\: \, h_u  \in \cU_{S^2} \;\right \}$ of $\cX_2^D$. We then note the following:
\begin{equation}
  \label{eq:G-equivalence}
  \begin{split}
    &\textit{If $G_\lambda(u)= 0$ for some $u \in \cX_2^D$, $\lambda \in (0,1)$, then the function $M_1 u +u_\l$}\\
    &\textit{solves (\ref{h-Neu-over-S2-pull-ball}) with $\mu$ replaced by $\mu(\lambda)/\lambda^2$ and  $h$ replaced by $\frac{\lambda}{1+h_u}$.}
  \end{split}
\end{equation}
We also note that
\begin{equation}
  \label{eq:trivial-branch-S-2}
G_\lambda(0) = F_\lambda(0,0) \qquad \text{for all $\lambda \in (0,1)$.}
\end{equation}

\begin{Proposition}
  \label{sec:functional-setting-4-S2}
For every $\lambda \in (0,1)$, the map $G_\l : \cU \subset \cX_2^D \to \cY $ defined by  \eqref{eq:DeffGl-ND-S2} is of class $C^\infty$.  Moreover for all  $v\in \cX_2^D$ we have 
\begin{align}\label{eequivla-ND-S2}
D G_\lambda(0)v=  L_\l^0 v= \mu(\lambda) v  + \partial_{tt} v- \lambda \tan (\lambda t)  \partial_tv +  \frac{\lambda^2}{\cos^2 (\lambda t)} \partial_{xx}v .
\end{align}
\end{Proposition}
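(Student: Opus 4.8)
The plan is to mirror the proof of Proposition~\ref{sec:functional-setting-4} almost verbatim; only the explicit one-dimensional ingredients change, with the Bessel functions $U_m$ replaced by the eigenfunctions $U_\lambda$ of \eqref{eq:scalar-eigenvalue}, and with $L_\lambda^h$ now linearized around $h\equiv0$ rather than around $h\equiv1$.

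First I would establish that $G_\lambda$ is of class $C^\infty$. Since the map $M$ in \eqref{def-M-S2} is linear and a topological isomorphism (by the argument of Lemma~\ref{M-isomorphism}), it suffices to show that $F_\lambda\colon\cX_2^{DN}\times\cU_{S^2}\to\cY$ defined in \eqref{eq:MappinmgF-ND-S2} is $C^\infty$; and by the definition of $\|\cdot\|_{\cY}$ this reduces, exactly as in the proof of Proposition~\ref{sec:functional-setting-4}, to the $C^\infty$ regularity of $F_\lambda^1\colon\cX_2^{DN}\times\cU_{S^2}\to C^{1,\alpha}_p(\ov{\Omega_*})$ and $F_\lambda^2\colon\cX_2^{DN}\times\cU_{S^2}\to\cX_0^D$, with $D^kF_\lambda=D^kF_\lambda^1+D^kF_\lambda^2$. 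The coefficients of $F_\lambda^1,F_\lambda^2$ are built from $h,h',h''$ and from the real-analytic functions $\tau\mapsto\tan\tau$, $\tau\mapsto\cos^{-2}\tau$, $s\mapsto(1+s)^{-1}$ evaluated at $\tfrac{\lambda t}{1+h(x)}$; the point is that $h\in\cU_{S^2}$ forces $1+h>\tfrac2\pi$, whence $\bigl|\tfrac{\lambda t}{1+h(x)}\bigr|<\tfrac{\lambda\pi}{2}<\tfrac\pi2$ on $\ov{\Omega_*}$ because $\lambda\in(0,1)$ and $|t|\le1$. Thus these arguments range in a fixed compact subinterval on which $\tan$ and $\cos^{-2}$ are analytic, so the associated Nemytskii operators are smooth between the H\"older spaces and the resulting products lie in the claimed target spaces; checking the latter is carried out as in the proof of Lemma~\ref{sec:functional-setting-1}: every summand of $F_\lambda^2$ carries a factor vanishing on $\partial\Omega_*$ for $u\in\cX_2^{DN}$ (namely $\partial_{xx}u$, $\partial_t u$, or $\mu(\lambda)u$), while the potentially derivative-losing terms $t^2\partial_{tt}u$ and $t\,\partial_t\partial_x u$ appearing in $F_\lambda^1$ still possess a $C^{0,\alpha}(\ov{\Omega_*})$ derivative in $t$ precisely because the space $\cX_2$ encodes the extra regularity $\partial_t u\in C^{2,\alpha}(\ov{\Omega_*})$ (cf.\ Remark~\ref{rem-loss}).

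Next I would compute $DG_\lambda(0)$. The analogue of \eqref{eq:perturbed-u-n-h} is that, for fixed $h\in\cU_{S^2}$, one has $L_\lambda^h(u_\lambda^h)=0$, where $u_\lambda^h(t,x):=u_\lambda\bigl(\tfrac{t}{1+h(x)},x\bigr)$, with $u_\lambda$ extended as explained above. Indeed $\Upsilon_{\lambda/(1+h)}=\Upsilon_\lambda\circ\Phi_h$ with $\Phi_h(t,x)=\bigl(\tfrac{t}{1+h(x)},x\bigr)$, so $g_{\lambda/(1+h)}=\Phi_h^{*}g_\lambda$ and therefore $\Delta_{g_{\lambda/(1+h)}}(v\circ\Phi_h)=(\Delta_{g_\lambda}v)\circ\Phi_h$; applying this with $v=u_\lambda$, using $L_\lambda^h=\lambda^2\cL^{\lambda/(1+h)}_{\mu(\lambda)/\lambda^2}$, and recalling that $\cL^{\lambda}_{\mu(\lambda)/\lambda^2}u_\lambda=0$ (which is precisely \eqref{eq:scalar-eigenvalue} for $U_\lambda$ with eigenvalue $\mu(\lambda)$, rescaled by $\lambda^{-2}$), gives the claim. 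Differentiating $s\mapsto L_\lambda^{sh}(u_\lambda^{sh})=0$ at $s=0$ and writing $w_h:=-\tfrac{d}{ds}\big|_{s=0}u_\lambda^{sh}$ — so that $M_1v=v-w_{h_v}$ by \eqref{eq:DeffM-ND--2-S2} and \eqref{eq:DeffM-ND--1-S2}, the analogue of the identity worked out in Remark~\ref{rem:const-sol} — yields $\bigl(\tfrac{d}{ds}\big|_{s=0}L_\lambda^{sh}\bigr)u_\lambda=L_\lambda^0w_h$. The chain rule, exactly as in \eqref{eq:DGl0-compt}, then gives
\[
DG_\lambda(0)v=\partial_uF_\lambda(0,0)M_1v+\partial_hF_\lambda(0,0)h_v=\bigl(L_\lambda^0v-L_\lambda^0w_{h_v}\bigr)+L_\lambda^0w_{h_v}=L_\lambda^0v,
\]
and substituting $h=0$ in \eqref{eq:def-L-lambda-h-sphere} (all $h'$- and $h''$-terms drop out) identifies $L_\lambda^0v$ with the right-hand side of \eqref{eequivla-ND-S2}.

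The step I expect to be the main obstacle is the target-space bookkeeping for $F_\lambda^1$ and $F_\lambda^2$: this is where the loss of derivatives intrinsic to the problem must be absorbed by the tailor-made spaces $\cX_2^{D},\cX_2^{DN},\cY$, and where the condition defining $\cU_{S^2}$ together with $\lambda\in(0,1)$ is essential to keep the coefficients $\tan\tfrac{\lambda t}{1+h}$ and $\cos^{-2}\tfrac{\lambda t}{1+h}$ away from their poles. Once this is in place, everything else is the same formal computation as in Proposition~\ref{sec:functional-setting-4}.
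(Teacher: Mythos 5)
Your proposal is correct and follows the same route as the paper: exhibit $u_\lambda^h = u_\lambda\circ\Phi_h$ as an explicit zero of $L_\lambda^h$ via the pullback identity $g_{\lambda/(1+h)}=\Phi_h^*g_\lambda$, differentiate $s\mapsto L_\lambda^{sh}u_\lambda^{sh}=0$ at $s=0$ to identify $(\frac{d}{ds}|_{s=0}L_\lambda^{sh})u_\lambda$ with $L_\lambda^0$ applied to the correction term, and then cancel it against the $M_1$-correction via the chain rule, while the smoothness of $G_\lambda$ is reduced to that of $F_\lambda^1,F_\lambda^2$ as in Lemma~\ref{sec:functional-setting-1}. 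Your extra bookkeeping (the bound $|\lambda t/(1+h)|<\pi/2$ keeping the coefficients away from the poles of $\tan$ and $\sec^2$, and the check that the derivative-losing terms land in the right target spaces) is left implicit in the paper but is accurate and worth recording.
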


\begin{proof}
  Similarly as in the proof of Proposition~\ref{sec:functional-setting-4}, the $C^\infty$-regularity of $G_\lambda$ follows from the $C^\infty$-regularity of the map $F$. To derive (\ref{eequivla-ND-S2}), we recall that by construction we have
  $$
  \Bigl(\lambda^2 \Delta_{g_\lambda}+  \mu(\lambda)\Bigr)u_\l = -\Bigl( -\partial_{tt} u_\l + \lambda \tan (\lambda t)  \partial_t u_\l - \mu(\lambda) u_\l\Bigr)= 0
  $$
  on $(-\frac{\pi}{2\l},\frac{\pi}{2\l}) \times \R$. Consequently, the function 
  $$
  (t,x) \mapsto v_{\lambda,h}(t,x)= U_\lambda (\frac{|t|}{1+h(x)}) = u_\lambda(\frac{t}{1+h(x)}, x)
  $$
  satisfies
  \begin{align*}
  [L_{\lambda}^h v_{\lambda,h}](t,x) &=
                                       \lambda^2 [\cL_{\mu(\lambda)/\lambda^2}^{\lambda/ (1+ h)}v_{\lambda,h}](t,x) =
                                       \Bigl[\lambda^2 \Delta_{g_{\frac{\lambda}{1+ h}}}v_{\lambda,h} +\mu(\lambda)v_{\lambda,h}\Bigr](t,x)\\
&=\Bigl[\Bigl(\lambda^2 \Delta_{g_\lambda}+ \mu(\lambda)\Bigr)u_\l\Bigr](\frac{t}{1+h(x)},x) = 0 \qquad \text{for $(t,x) \in \Omega_*$.}
  \end{align*}
  Differentiating gives, for fixed $h \in C^{2,\alpha}_p(\R)$,
  \begin{equation}
    \label{eq:diff-gives}
0= \frac{d}{ds}\Bigl|_{s=0}  \Bigl [L_{\lambda}^{sh} v_{\lambda,sh}\Bigr] =
\Bigl(\frac{d}{ds}\Bigl|_{s=0} L_{\lambda}^{sh}\Bigr)u_\lambda + L_\lambda^0 w_{\lambda,h}
  \end{equation}
  with
  $$
  w_{\lambda,h}(t,x) = \Bigl(\frac{d}{ds}\Bigl|_{s=0} v_{\lambda,sh}\Bigr)(t,x) = - t U_\lambda' (|t|)h(x) 
  $$
  for $(t,x) \in \Omega_*$. We can thus follow the proof of Proposition~\ref{sec:functional-setting-4} and see that for $v\in \cX_2^D$ we have
  $$
  [M_1 v](t,x) = v(t,x)- t U_\lambda' (|t|)h_v(x) = v(t,x) +   w_{\lambda,h_v}(t,x)
  $$
  and therefore, by the chain rule and (\ref{eq:diff-gives}), 
\begin{align*}
  DG_\l(0)v&=\de_u F_\l(0,0)M_1 v+ \de_h F_\l(0,0)h_v\\
  &= \cL_{\mu(\lambda)/\lambda^2}^{\lambda}M_1 v + \Bigl(\frac{d}{ds}\Bigl|_{s=0} L^{sh_v}_\l\Bigr)u_\lambda \\
           &= L_{\lambda}^0 \Bigl(v+ w_{\lambda,h_v}\Bigr) + \Bigl(\frac{d}{ds}\Bigl|_{s=0} L^{ sh_v}_\l\Bigr)u_\lambda = L_{\lambda}^0 v,
\end{align*}
as claimed. \QED 
\end{proof}
Next we wish to show the following.

\begin{Proposition}
\label{fredholm-S2}  
For every $\lambda \in (0,1)$, the operator $L_\lambda^0 = DG_\lambda : \cX_2^D \to \cY$ is a Fredholm operator of index zero.
\end{Proposition}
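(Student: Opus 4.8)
The plan is to follow the proof of Proposition~\ref{fredholm}. Write $L_\lambda^0 = \widetilde L + \mu(\lambda)\,\id$, where
$$
\widetilde L : \cX_2^D \to \cY, \qquad \widetilde L v = \partial_{tt} v - \lambda \tan(\lambda t)\,\partial_t v + \frac{\lambda^2}{\cos^2(\lambda t)}\partial_{xx} v
$$
is obtained from $L_\lambda^0$ by dropping the zeroth-order term. Since $\lambda \in (0,1) \subset (0,\tfrac{\pi}{2})$, the coefficients $\tan(\lambda t)$ and $\cos^{-2}(\lambda t)$ are smooth and bounded on $\overline{\Omega_*}$; arguing as in Lemma~\ref{sec:functional-setting-1} and using the commutation $\partial_x \partial_t = \partial_t \partial_x$ on $\cX_1$ (cf.\ Remark~\ref{sec:functional-setting-2}), one checks that for $v \in \cX_2^D$ one has $\partial_{tt}v \in C^{1,\alpha}_p(\overline{\Omega_*})$, $\tan(\lambda t)\partial_t v \in C^{2,\alpha}_p(\overline{\Omega_*})$ and $\cos^{-2}(\lambda t)\partial_{xx}v \in \cX_0^D$, so that $\widetilde L$ maps $\cX_2^D$ boundedly into $\cY$. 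Once we show that $\widetilde L$ is a topological isomorphism, the proposition follows, because $L_\lambda^0 - \widetilde L = \mu(\lambda)\,\id : \cX_2^D \to \cY$ is compact (the embedding $\cX_2^D \hookrightarrow \cY$ is compact), and the Fredholm property and index are stable under compact perturbations.

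\emph{Injectivity of $\widetilde L$.} If $v \in \cX_2^D$ satisfies $\widetilde L v = 0$, rewrite $\cos(\lambda t)\widetilde L v = \partial_t\bigl(\cos(\lambda t)\partial_t v\bigr) + \frac{\lambda^2}{\cos(\lambda t)}\partial_{xx}v$, multiply by $v$ and integrate over $\Omega_*$ (over one period in $x$). The boundary terms vanish since $v = 0$ on $\partial\Omega_*$, and we obtain
$$
0 = \int_{\Omega_*}\Bigl(\cos(\lambda t)\,|\partial_t v|^2 + \frac{\lambda^2}{\cos(\lambda t)}\,|\partial_x v|^2\Bigr)\,d(t,x).
$$
As $\cos(\lambda t) > 0$ on $[-1,1]$, this forces $v$ to be constant, hence $v \equiv 0$ because $v$ vanishes on $\partial\Omega_*$.

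\emph{Surjectivity of $\widetilde L$.} We argue in two steps, exactly as in the proof of Proposition~\ref{fredholm}. In Step~1 we show that for every $f \in C^{0,\alpha}_p(\overline{\Omega_*})$ the problem $\widetilde L u = f$, $u = 0$ on $\partial\Omega_*$, has a solution $u \in C^{2,\alpha}_p(\overline{\Omega_*})$: for $f \in C^\infty_p(\overline{\Omega_*})$ one expands $f = \sum_{\ell \ge 0} f_\ell(t)\cos(\ell x)$, with convergence in $C^1$, solves for each $\ell$ the ODE $u_\ell'' - \lambda\tan(\lambda t)u_\ell' - \frac{\lambda^2\ell^2}{\cos^2(\lambda t)}u_\ell = f_\ell$ on $(-1,1)$ with $u_\ell(\pm 1)=0$ — uniquely solvable because the associated quadratic form $\int_{-1}^1\bigl(\cos(\lambda t)(u_\ell')^2 + \frac{\lambda^2\ell^2}{\cos(\lambda t)}u_\ell^2\bigr)\,dt$ is coercive on $H^1_0(-1,1)$ — sums the modes, and passes to the limit via Schauder estimates and periodicity; the case of general $f \in C^{0,\alpha}_p$ then follows by mollification and Schauder estimates, word for word as in Step~1 of Proposition~\ref{fredholm}. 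In Step~2 we show that if moreover $f \in \cY$, then the solution $u$ lies in $\cX_2^D$. This is the analogue of Lemma~\ref{regularity-lemma-1-ND}: with $v := \partial_t u \in C^{1,\alpha}_p(\overline{\Omega_*})$, differentiating the equation in $t$ and using the equation itself to eliminate the third-order terms shows that $v$ solves (first distributionally, then classically by Schauder theory) an equation $\widetilde L v = g$ with $g \in C^{0,\alpha}_p(\overline{\Omega_*})$; here $g$ involves $\partial_t f$, which exists in $C^{0,\alpha}$ because $f \in \cY$. Since $u = 0$ on $\partial\Omega_*$ forces $\partial_{xx}u = 0$ there, the original equation yields the boundary relation $\partial_t v - \lambda\tan(\lambda t)\,v = f$ on $\partial\Omega_*$, whose right-hand side lies in $C^{1,\alpha}(\partial\Omega_*)$ because the $\cX_0^D$-component of $f$ vanishes on $\partial\Omega_*$. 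Elliptic regularity for this regular oblique-derivative problem gives $v = \partial_t u \in C^{2,\alpha}(\overline{\Omega_*})$, i.e.\ $u \in \cX_2^D$, and $\widetilde L u = f$. By the open mapping theorem $\widetilde L$ is then a topological isomorphism, which completes the proof.

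I expect Step~2 of the surjectivity argument (the regularity lemma) to require the most care: producing the interior equation and the boundary condition satisfied by $v = \partial_t u$ is more computational here than in Lemma~\ref{regularity-lemma-1-ND} because of the variable coefficients $\tan(\lambda t)$ and $\cos^{-2}(\lambda t)$. Since $\lambda < 1 < \tfrac{\pi}{2}$ guarantees that all these coefficients are smooth and bounded on $\overline{\Omega_*}$, however, no genuinely new difficulty arises and the scheme of Lemma~\ref{regularity-lemma-1-ND} carries over with only notational changes.
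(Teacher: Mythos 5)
Your proof is correct and follows essentially the same route as the paper: decompose $L_\lambda^0 = (L_\lambda^0 - \mu(\lambda)\,\id) + \mu(\lambda)\,\id$, note that the identity $\cX_2^D \hookrightarrow \cY$ is compact, show injectivity of $L_\lambda^0 - \mu(\lambda)\,\id$ by writing it in divergence form and integrating by parts, and for surjectivity first produce a $C^{2,\alpha}$ solution and then upgrade to $\cX_2^D$ by differentiating the equation in $t$ and invoking elliptic regularity for $v = \partial_t u$, using that the $\cX_0^D$-component of $f$ vanishes on $\partial\Omega_*$ so the boundary data is $C^{1,\alpha}$.

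The one place where you diverge from the paper is cosmetic: you prove Step~1 of surjectivity (solvability of $\widetilde L u = f$, $u|_{\partial\Omega_*}=0$, in $C^{2,\alpha}_p$ for $f \in C^{0,\alpha}_p$) explicitly by Fourier expansion, coercivity of the mode-wise quadratic forms, Schauder estimates and mollification, exactly mimicking Step~1 of the proof of Proposition~\ref{fredholm}. The paper instead simply cites the fact that $L_\lambda^0 - \mu(\lambda)\,\id : C^{2,\alpha}_p(\overline{\Omega_*}) \cap \cX_0^D \to C^{0,\alpha}_p(\overline{\Omega_*})$ is an isomorphism as ``well known''. Your version is more self-contained; neither is more than a presentational choice. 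One small wording issue: after differentiating $\widetilde L u = f$ in $t$ there are no third-order terms to eliminate --- the extra term that appears is $\partial_t\bigl(\lambda^2\cos^{-2}(\lambda t)\bigr)\partial_{xx}u$, which is second order in $u$ and lands in $C^{0,\alpha}$ directly since $u \in C^{2,\alpha}$; but this does not affect the correctness of the argument.
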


\begin{proof}
It suffices to show that 
\begin{equation}
  \label{eq:suffices-iso-sphere}
  \begin{split}
    &\text{$L_\l^0-\mu(\l)\id  = \frac{1}{\cos(\l t)}\de_t \cos(\l t)\de_t +\frac{\lambda^2}{\cos^2(\l t)} \de_{xx} \;:\; \cX_2^D \to \cY$}\\
   &\text{is a topological isomorphism,}  
   \end{split}
\end{equation}
since the inclusion $\cX_2^D \to \cY$ is  a compact operator. An integration by parts argument as in the proof of Proposition~\ref{fredholm} shows that $L_\l^0-\mu(\l)\id$ is injective. Moreover, it is well known that $L_\l^0-\mu(\l)\id: C^{2,\a}_p(\ov{\O_*})\cap\cX_0^D \to C^{0,\a}_p(\ov{\O_*})$ is an isomorphism. This implies, in particular, that for every $f \in \cY \subset C^{0,\a}_p(\ov{\O_*})$ there exists $u \in C^{2,\a}_p(\ov{\O_*})\cap\cX_0^D$ with $\bigl(L_\l^0-\mu(\l)\id\bigr)u = f$. It thus remains to show that
\begin{equation}
  \label{eq:remains-to-show-sphere}
u \in \cX_2^D.  
\end{equation}
Since $u \in C^{2,\alpha}(\ov{\Omega_*})$, we have 
  $v:={\de}_t u \in C^{1,\alpha}_{p}(\ov{\Omega_*}).$  Moreover, it is easy to see, by differentiating the equation $L_\l^0u-\mu(\l)u=0$ in distributional sense, that $v$ satisfies  
   \be\label{eq:Dtuo-S2}
  L_\l^0 v= \mu(\l)v + \de_t f- [\lambda \de_t(\tan (\lambda t))] \partial_tv     + \Bigl[\de_t\Bigl(\frac{\lambda^2}{\cos^2 (\lambda t)} \Bigr)\Bigr] \partial_{xx}v   \qquad\textrm{ in $\O_*$}
  \ee
  in distributional sense.  Note  that $\de_t f \in C^{0,\alpha}(\ov{\Omega_*})$,  since $f \in \cY$.  Hence the RHS of \eqref{eq:Dtuo-S2} belongs to $C^{0,\alpha}(\overline{\O_*})$.   In addition,
\be \label{eq:detv}
{\de}_t v \Big|_{\de{\O_*}}=   \de_{tt} u\Big|_{\de{\O_*}}   =\Bigl(\pm \lambda \tan (\lambda) \de_t u + f\Bigr)\Big|_{\de{\O_*}} \in C^{1,\alpha}(\de{\O_*}) ,
\ee
 since $u \in C^{2,\alpha}(\ov{\Omega_*})$, $u\equiv 0$ on $\de\O_*$ and 
  $$
  f = f_1 + f_2 \in \cY\qquad \text{with $f_1 \in C^{1,\alpha}_{p}(\ov{\O_*})$ and $f_2 \in \cX_0^D$.}
  $$
Consequently, standard elliptic regularity, applied to the Neuman boundary value problem \eqref{eq:Dtuo-S2}-\eqref{eq:detv}  gives $v={\de}_t u \in C^{2,\alpha}(\overline{\O_*})$, as required. \QED 
\end{proof}

Next, for $\l\in [0,1)$ and $\ell\in \N\cup\{0\}$,  we denote by $\s_{\ell}(\l)$ the first eigenvalue of the eigenvalue problem
\begin{equation}
(E)_{\ell,\lambda} \quad 
\left\{\begin{aligned}
&-\frac{1}{\cos(\lambda t)}\de_t \cos(\lambda t)(\partial_{t} v) + \frac{(\ell \lambda)^2}{\cos^2 (\lambda t)}v -   \mu(\lambda) v =\s  v\quad \text{in $(0,1)$,}\\
&v'(0)=v(1)=0,
\end{aligned}
\right.
\label{eq:ell-eigenvalue-problem}
\end{equation}
  which is characterized variationally as
  \begin{align}
    \label{eq:s-ell-var-char}
    \s_{\ell}(\l) &= \inf_{v\in \cH(0,1)} \frac{ \int_{0}^{1} (v')^2\cos(t\l)  dt + (\ell \l)^2\int_{0}^{1} \frac{ v^2}{\cos(t\l)}  dt }{   \int_{0}^{1} v^2\cos(t\l)  dt  }\; - \; \mu(\l)  ,
\end{align}   
with $\cH(0,1):= \{u \in H^1(0,1)\::\: \text{$u(1)=0$ in trace sense}\}$. It is well known that $\s_{\ell}(\l)$ is simple and that eigenfunctions are smooth on $[0,1]$, so there exists a unique eigenfunction $V_\lambda \in C^2([0,1])$ satisfying $V_\lambda'(0)=0, V_\lambda(0)=1$ and $V_\lambda(1)=0$. Moreover,
\begin{equation}
  \label{eq:positivity-v-lambda}
\text{$V_\lambda$ is positive on $(0,1)$.}   
\end{equation}
We also note that the functions $\l\mapsto \s_{\ell}(\l) $ and $\l\mapsto V_\l \in C^2([0,1])$ are differentiable on $[0,1)$.
We need the following further information on the function $\l \mapsto \s_{\ell}(\l)$.

  \begin{Lemma}\label{eq:lem-S2-eig-1}
    \begin{itemize}
    \item[(i)]  We have $\s_{\ell}(0)= -\frac{3}{4}\pi^2$ and
      \begin{equation}
        \label{eq-first-est}
       \s_{\ell}(\l) \ge \s_0(\l) + \ell^2 \lambda^2  \qquad \text{for $\l \in (0,1)$.}  
      \end{equation}
    \item[(ii)] We have  
      \begin{equation}
      \sigma_{\ell}'(\l) \ge (2 \ell^2 -C)\l \qquad \quad \text{for $\lambda \in (0,1)$}       \label{eq:formular-der-s-ell}
    \end{equation}
    with a constant $C>0$ independent of $\l$ and $\ell$.
    \item[(iii)] For all $\lambda_0\in (0,1)$ there exists $\ell_0= \ell_0(\l_0)$ with the property that for all $\ell\geq\ell_0$, there exists a unique $\l_* \in (0,\l_0)$ such that  $\s_{\ell}(\l_*) =0$ and $\s_{\ell}'(\l_*) >0$.
    \end{itemize}
  \end{Lemma}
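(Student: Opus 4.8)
\noindent\emph{Strategy.} I would prove (i) by a direct computation, (ii) by the Feynman--Hellmann (envelope) formula for the first eigenvalue combined with a concentration estimate for the eigenfunction, and (iii) by feeding (i) and (ii) into the intermediate value theorem.

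\emph{Part (i).} At $\lambda=0$ the term $(\ell\lambda)^2/\cos^2(\lambda t)$ vanishes, so $(E)_{\ell,0}$ reduces to $-u''-\mu(0)u=\sigma u$, $u'(0)=u(1)=0$, \emph{independently of $\ell$}. Since $\mu(0)=\pi^2$ (the first positive Neumann eigenvalue of $-\partial_{tt}$ on $(0,1)$) and the first eigenvalue of $-\partial_{tt}$ with $u'(0)=u(1)=0$ is $\pi^2/4$, this gives $\sigma_\ell(0)=\pi^2/4-\pi^2=-\tfrac34\pi^2$. For the inequality I would start from \eqref{eq:s-ell-var-char} rewritten as $\sigma_\ell(\lambda)+\mu(\lambda)=\inf_v \mathcal A_\lambda(v)/\mathcal B_\lambda(v)$ with $\mathcal A_\lambda(v)=\int_0^1(v')^2\cos(\lambda t)\,dt+(\ell\lambda)^2\int_0^1 v^2/\cos(\lambda t)\,dt$ and $\mathcal B_\lambda(v)=\int_0^1 v^2\cos(\lambda t)\,dt$; because $1/\cos(\lambda t)\ge\cos(\lambda t)$ on $(0,1)$, the quotient is bounded below by $\big(\int(v')^2\cos(\lambda t)\big)\big/\big(\int v^2\cos(\lambda t)\big)+\ell^2\lambda^2$, and taking the infimum and using \eqref{eq:var-char-mu-Neum} yields $\sigma_\ell(\lambda)\ge\sigma_0(\lambda)+\ell^2\lambda^2$, which is stronger than \eqref{eq-first-est} since $\mu(\lambda)\ge0$.

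\emph{Part (ii).} Write $\sigma_\ell(\lambda)=E_\ell(\lambda)-\mu(\lambda)$, where $E_\ell(\lambda)=\min_{v(1)=0}\mathcal A_\lambda(v)/\mathcal B_\lambda(v)$ is the first eigenvalue of $\mathcal M_\ell:=-\tfrac1{\cos\lambda t}\partial_t(\cos\lambda t\,\partial_t)+\tfrac{(\ell\lambda)^2}{\cos^2\lambda t}$ on $(0,1)$; let $W$ be the minimizer normalized by $\mathcal B_\lambda(W)=1$. Since $E_\ell(\lambda)$ is simple, $W$ depends differentiably on $\lambda$ with $W(1)\equiv0$, so the envelope formula gives
\[
\sigma_\ell'(\lambda)=\partial_\lambda\mathcal A_\lambda(W)-E_\ell(\lambda)\,\partial_\lambda\mathcal B_\lambda(W)-\mu'(\lambda)=-R_1+2\ell^2\lambda\,Q+(\ell\lambda)^2R_3+E_\ell(\lambda)R_2-\mu'(\lambda),
\]
where $R_1=\int_0^1 t\sin(\lambda t)(W')^2\,dt$, $Q=\int_0^1 W^2/\cos(\lambda t)\,dt\ge1$ and $R_2,R_3\ge0$. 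As $\mu$ is even in $\lambda$ and $C^2$, $|\mu'(\lambda)|\le C_\mu\lambda$ on $[0,1]$, and $E_\ell(\lambda)\ge(\ell\lambda)^2$ because the potential is $\ge(\ell\lambda)^2$; dropping the nonnegative terms and using $Q\ge1$,
\[
\sigma_\ell'(\lambda)\ \ge\ 2\ell^2\lambda-R_1-C_\mu\lambda ,
\]
so everything reduces to the \emph{uniform} bound $R_1\le C\lambda$. Using $t\sin(\lambda t)\le\lambda t^2$ one has $R_1\le\lambda\int_0^1 t^2(W')^2\,dt$, and testing $\mathcal M_\ell W=E_\ell(\lambda)W$ against $t^2W$ and integrating by parts yields
\[
\int_0^1 t^2\cos(\lambda t)(W')^2\,dt=1-\lambda R_2+\int_0^1 t^2W^2\Big(E_\ell(\lambda)\cos\lambda t-\tfrac{(\ell\lambda)^2}{\cos\lambda t}\Big)dt\ \le\ 1+\big(E_\ell(\lambda)-(\ell\lambda)^2\big)\!\int_0^1 t^2W^2\,dt ,
\]
where I used $E_\ell\cos\lambda t-\tfrac{(\ell\lambda)^2}{\cos\lambda t}\le E_\ell-(\ell\lambda)^2$. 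It thus remains to bound $\big(E_\ell(\lambda)-(\ell\lambda)^2\big)\int_0^1 t^2W^2\,dt$ independently of $\ell,\lambda$. On one hand, $\sin^2 s\ge\tfrac4{\pi^2}s^2$ on $[0,\tfrac\pi2]$ gives $E_\ell(\lambda)-(\ell\lambda)^2=\int(W')^2\cos\lambda t+(\ell\lambda)^2\int\tfrac{\sin^2\lambda t}{\cos\lambda t}W^2\ge\tfrac{4\ell^2\lambda^4}{\pi^2}\int_0^1 t^2W^2\,dt$; on the other hand, a problem-adapted test function $v_\alpha(t)=\cos(\tfrac{\pi t}{2})e^{-\alpha t^2/2}$ with $\alpha$ comparable to $\ell\lambda^2$, together with $\cos s\ge1-\tfrac{s^2}{2}$ and $1/\cos s\le1+C_0 s^2$, yields $E_\ell(\lambda)-(\ell\lambda)^2\le C_1\ell\lambda^2+C_2$. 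Combining these two estimates (and using the crude $\int_0^1 t^2W^2\le(\cos1)^{-1}$ when $\ell\lambda^2$ is small) gives $\big(E_\ell(\lambda)-(\ell\lambda)^2\big)\int_0^1 t^2W^2\le C_5$, hence $R_1\le\tfrac{1+C_5}{\cos1}\lambda$, and finally $\sigma_\ell'(\lambda)\ge(2\ell^2-C)\lambda$ with an absolute $C$. \textbf{The main obstacle is precisely this concentration estimate:} the naive bound $\int t^2(W')^2\le\int(W')^2\le(\cos1)^{-1}\int(W')^2\cos\lambda t$ is useless because the right-hand side grows like $\ell\lambda^2$, and one genuinely has to exploit the $t^2$-weight together with the localization of $W$ near $t=0$ forced by the large potential $(\ell\lambda)^2/\cos^2(\lambda t)$.

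\emph{Part (iii).} Fix $\lambda_0\in(0,1)$. By (ii) there is an absolute constant $C$ with $\sigma_\ell'(\lambda)\ge(2\ell^2-C)\lambda>0$ for all $\lambda\in(0,1)$ whenever $2\ell^2>C$; fix $\ell_1$ with $2\ell_1^2>C$, so $\sigma_\ell$ is strictly increasing on $[0,1)$ for $\ell\ge\ell_1$. By (i), $\sigma_\ell(0)=-\tfrac34\pi^2<0$ while $\sigma_\ell(\lambda_0)\ge\sigma_0(\lambda_0)-\mu(\lambda_0)+\ell^2\lambda_0^2$, which is positive once $\ell^2\lambda_0^2>\mu(\lambda_0)-\sigma_0(\lambda_0)$; fix $\ell_2=\ell_2(\lambda_0)$ accordingly and set $\ell_0(\lambda_0)=\max\{\ell_1,\ell_2\}$. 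For $\ell\ge\ell_0$ the continuous, strictly increasing function $\sigma_\ell$ changes sign on $(0,\lambda_0)$, so there is a unique $\lambda_*\in(0,\lambda_0)$ with $\sigma_\ell(\lambda_*)=0$, and $\sigma_\ell'(\lambda_*)>0$ by (ii). This establishes (iii) and finishes the proof of the lemma.
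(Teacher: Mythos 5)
Your treatment of parts (i) and (iii) coincides essentially with the paper's (variational splitting via $1/\cos(\lambda t)\ge 1\ge\cos(\lambda t)$ for (i); intermediate value theorem plus strict monotonicity for (iii)), and your observation that one actually obtains the slightly stronger bound $\sigma_\ell(\lambda)\ge\sigma_0(\lambda)+\ell^2\lambda^2$ is correct. For part (ii), however, you and the paper diverge after the common starting point. Both arguments differentiate the eigenvalue (Feynman--Hellmann in your notation, direct differentiation of the ODE in the paper's), and both must control the troublesome contribution coming from the $\lambda$-dependence of the weight multiplying $(V')^2$. You carry this term, $R_1=\int_0^1 t\sin(\lambda t)(W')^2\,dt$, into the estimate and bound it via a rather delicate two-sided concentration argument: an integration by parts against $t^2W$, the elementary inequality $\sin^2 s\ge\frac{4}{\pi^2}s^2$, and a matching upper bound $E_\ell(\lambda)-(\ell\lambda)^2\le C_1\ell\lambda^2+C_2$ obtained from a Gaussian-damped test function $\cos(\pi t/2)e^{-\alpha t^2/2}$ with $\alpha\sim\ell\lambda^2$. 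This test-function step is only sketched, and while it appears to be correct once one splits into the regimes $\ell\lambda^2\ge 1$ and $\ell\lambda^2<1$, it is genuinely delicate and the constants have to be tracked uniformly in both $\ell$ and $\lambda$. The paper avoids the entire problem by one further integration by parts \emph{before} estimating: since the coefficient of $(V')^2$ enters paired with $V'V=\frac{1}{2}\partial_t(V^2)$, the derivative can be moved onto the coefficient, replacing the $(V')^2$-term by $\frac{\lambda}{2}\int_0^1\rho(\lambda t)V_\lambda^2\,dt$ with $\rho(\tau)=-\bigl(\cos\tau+\frac{1+\tau\tan\tau}{\cos\tau}\bigr)$. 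The ratio $\rho(\tau)/\cos\tau$ is shown to be decreasing on $[0,\pi/2)$, which immediately gives the two-sided bound $\frac{\rho(1)}{\cos 1}\le\frac{\rho(\lambda t)}{\cos(\lambda t)}\le -2$, and the estimate (6.15) then drops out in a few lines with an explicit constant $C$; in particular $\mu'(\lambda)\le-\lambda$ is obtained by the same mechanism, sidestepping your appeal to $C^2$-regularity of $\mu$. Your route buys nothing that the paper's doesn't, costs significantly more, and leaves the test-function inequality as a loose end you would need to carry out in full detail; if you intend to keep it, you should write out the verification of $E_\ell(\lambda)-(\ell\lambda)^2\le C_1\ell\lambda^2+C_2$ carefully, since this is where a uniformity error would be easiest to make.
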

  \begin{proof}
    (i) It is well known that the first eigenvalues of (\ref{eq:scalar-eigenvalue}) and $(E)_{\ell,\lambda}$ for $\lambda = 0$ are given by $\mu(0)= \pi^2$ and $\s_\ell(0) = \frac{\pi^2}{4} - \mu(0) =  -\frac{3}{4}\pi^2$ with eigenfunctions $U_0(t)= \cos (\pi t)$ and $V_0(t)= \cos(\frac{\pi}{2}t)$.\\
    To see (\ref{eq-first-est}), we note that by (\ref{eq:s-ell-var-char}) we have   
    $$
    \s_{\ell}(\l) \ge \inf_{v\in \cH(0,1)} \frac{ \int_{0}^{1} (v')^2\cos(t\l)  dt}{   \int_{0}^{1} v^2\cos(t\l)  dt  } + (\ell \l)^2 \inf_{v\in \cH(0,1)} \frac{\int_{0}^{1} \frac{ v^2}{\cos(t\l)}  dt }{   \int_{0}^{1} v^2\cos(t\l)  dt  }
-\mu(\l)  \ge \s_0(\l)+ (\ell \l)^2
$$   
since $\frac{1}{\cos(t\l)} \ge 1 \ge \cos(t\l)$ for $\lambda,t \in [0,1]$.\\ 
(ii) We  recall that 
\begin{equation}
  \label{eq:v_l-sol}
\displaystyle-\frac{1}{\cos(\l t)}\de_t(\cos(\l t)\de_t V_\l )  + \frac{(\ell \lambda)^2}{\cos^2 (\lambda t)} V_\l- \mu(\l) V_\l=\s_{\ell}(\l)  V_\l \qquad\textrm{ on (0,1)}
\end{equation}
and $V_\l'(0)=V_\l(1)=0$. Differentiating with respect to $\l$  and setting $w_\l=\de_\l V_\l $,  we get 
\begin{align*}
\s_{\ell}'(\l) V_\l &= -\partial_{tt} w_\l + \lambda \tan (\lambda t)  \partial_t w_\l + \frac{(\ell \lambda)^2}{\cos^2 (\lambda t)} w_\l - \mu(\l)w- \s_{\ell}(\l) w_\l \\
  &+ \de_\l( \lambda \tan (\lambda t))  \partial_t V_\l - \mu'(\l) V_\l+ \de_\l\left( \frac{(\ell \lambda)^2}{\cos^2 (\lambda t)} \right)V_\l.
\end{align*}
We multiply this by $V_\l \cos(t\l)$,  integrate over $(0,1)$ and use  \eqref{eq:v_l-sol} to obtain
\begin{align}
  &\s_{\ell}'(\l)  \int_{0}^{1} V_\l^2  \cos(t\l) dt = - \frac{1}{2}\int_{0}^{1} \de_t \bigl[ \cos(t\l) \de_\l( \lambda \tan (\lambda t))\bigr]  V_\l^2  dt  \nonumber\\
  &-\mu'(\l)   \int_{0}^{1} V_\l^2 \cos(t\l)  dt +\int_{0}^{1}   \de_\l\left( \frac{(\ell \lambda)^2}{\cos^2 (\lambda t)} \right)V_\l^2  \cos(t\l) dt,\label{final-extra}
\end{align}
where
\begin{align*}
  &-\de_t \bigl[ \cos(t\l) \de_\l( \lambda \tan (\lambda t))\bigr]= -\de_t \bigl[ \cos(t\l)\bigl( \tan (\lambda t)+ \frac{\lambda t}{\cos^2(\lambda t)}\bigr)\bigr]\\ 
  &= -\de_t \bigl[ \sin (\lambda t)+ \frac{\lambda t}{\cos(\lambda t)}\bigr] = \lambda \rho(\lambda t)
\end{align*}
with
$$
\rho(\tau) := -\partial_\tau \bigl[ \sin \tau+ \frac{\tau}{\cos \tau }\bigr] =-\Bigl( \cos \tau + \frac{1 + \tau \tan \tau}{\cos \tau}\Bigr). 
$$
A similar (but easier) argument for the function $\lambda \mapsto \mu(\l)$  using   \eqref{eq:scalar-eigenvalue}
gives 
\begin{align}
\label{eq:der-lam1}
& \mu'(\l) \int_{0}^{1} U_\l^2  \cos(t\l) dt = \frac{\l}{2}\int_{0}^{1}  \rho(\l t) U_\l^2  dt.
\end{align}
We note that the function $\tau \mapsto \frac{\rho(\tau)}{\cos \tau}$ is decreasing on $[0, \frac{\pi}{2})$ since, by direct compuation,
$$
\partial_\tau \frac{\rho(\tau)}{\cos \tau} =-\partial_\tau \Bigl(1 +\frac{1 +\tau \tan \tau}{\cos^2 \tau}\Bigr) =-
\frac{1}{\cos^2 \tau} \bigl(3 \tan \tau +2\tau  \tan ^2 \tau + \frac{\tau}{  \cos^2 \tau }\bigr)<0
$$
for $\tau \in (0,\frac{\pi}{2})$. Therefore we have 
\begin{align}\label{eq: boudrho}
\frac{\rho(1)}{\cos 1} \leq  \frac{\rho(\lambda t)}{ \cos ( \lambda t)}\leq  \frac{\rho(0)}{\cos 0} = -2 \qquad \text{for every $\lambda, t \in [0, 1]$.}
 \end{align} 
Using \eqref{eq: boudrho} in \eqref{final-extra}  and (\ref{eq:der-lam1}), we see that $\mu'(\l) \le  - 2 \lambda$ and 
\begin{align*}
  \s_{\ell}'(\l) &=  \frac{\l}{2}\frac{\int_{0}^{1} \rho (\l t) V_\l^2  dt }{\int_{0}^{1} V_\l^2  \cos(t\l) dt}  -\mu'(\l) + \ell^2 \frac{\int_{0}^{1}   \de_\l \left( \frac{\lambda^2}{\cos^2 (\lambda t)} \right)V_\l^2  \cos(t\l) dt}{\int_{0}^{1} V_\l^2  \cos(t\l) dt}\\
  &\ge \frac{\lambda}{2}\Bigl(\frac{\rho(1)}{\cos 1} +2 \Bigr) +  \ell^2\, \frac{\int_{0}^{1} \de_\l\left( \frac{\lambda^2}{\cos^2 (\lambda t)} \right)V_\l^2  \cos(t\l) dt}{\int_{0}^{1} V_\l^2  \cos(t\l) dt}\\
&\ge \Bigl(2 \ell^2 -C\Bigr)\lambda \qquad \text{for $\lambda \in [0,1]$} 
\end{align*}
with a constant $C>0$ independent of $\l$ and $\ell$. Here we used in the last step that 
\begin{align*}
 \de_\l\left( \frac{  \lambda^2}{\cos^2 (\lambda t)} \right)= 2\frac{\l \cos^2 (\lambda t) +t\l^2 \sin (t\l)\cos(t\l)}{\cos^4 (\lambda t)} \geq  2\l\cos^{-2} (\lambda t) \ge 2 \l  
\end{align*}
for $t, \l \in [0,1]$. Hence (\ref{eq:formular-der-s-ell}) is proved.\\
(iii) Let $\lambda_0\in (0,1)$ be given. Choosing $\ell_0= \ell_0(\l_0) \in \N$ large enough, we can guarantee that $2 \ell_0^2 -C>0$ and
$\s_0(\l_0) + \ell^2_0 \lambda_0^2 >0$. Therefore from $(i)$,  (\ref{eq-first-est}),~(\ref{eq:formular-der-s-ell}) and the intermediate value theorem, it then follows that for all $\ell\geq \ell_0$ there exists a unique $\l_* \in (0,\l_0)$ such that  $\s_{\ell}(\l_*) =0$ and $\s_{\ell}'(\l_*) >0$.  \QED 
\end{proof}

As a consequence,  we may now deduce the following key proposition. 
\begin{Proposition}\label{propCR-ND-S2}
For a given $\lambda_0 \in (0,1)$,  let  $\ell\geq \ell_0=\ell_0(\lambda_0)$ and $\l_*=\l_*(\ell)\in (0,\l_0)$, where $\ell_0$ and $\l_*$ are given by Lemma \ref{eq:lem-S2-eig-1}. Then we have the following properties.
\begin{itemize}
\item[(i)] The kernel $N(L_{\l_*}^0)$ of the operator
$$
L_{\l_*}^0: \cX^D_2  \to \cY, \qquad L_{\l_*}^0  =DG_{\l_*}(0),
$$
is one-dimensional and spanned by the function
\begin{equation}\label{eq:kernel-ND-S2}
 v_{*}(t,x)=V_{\l_*}(|t|)\cos(\ell x).
\end{equation}
\item[(ii)] The image $R(L_{\l_*}^0)$ of $L_{\l_*}^0$ is given by 
$$
  R(L_{\l_*}^0 )= \left \lbrace  w\in \cY: \int_{\O_* } v_{*}(t,x)w(t,x) \cos(t\l_*)\,dxdt=0\right\rbrace.
$$
\item[(iii)] We have the transversality property 
\begin{equation*}
\partial_\lambda \Bigl|_{\lambda=\l_{*} }L_\lambda^0 v_{*} \not  \in \; R(L_{\l_*}^0).\\
\end{equation*}
\end{itemize}
\end{Proposition}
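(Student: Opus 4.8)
The plan is to follow the template of Proposition~\ref{propCR-ND}, the decisive structural observation being that $L_\l^0$ acts diagonally on the Fourier modes $x\mapsto\cos(kx)$. Introducing the one-dimensional operator
\[
A_{k,\l}:=-\frac{1}{\cos(\l t)}\,\partial_t\bigl(\cos(\l t)\,\partial_t\cdot\bigr)+\frac{(k\l)^2}{\cos^2(\l t)}-\mu(\l),
\]
one checks from the explicit form of $L_\l^0$ in Proposition~\ref{sec:functional-setting-4-S2} (using $\partial_{tt}-\l\tan(\l t)\partial_t=\frac{1}{\cos(\l t)}\partial_t(\cos(\l t)\partial_t\cdot)$) that $L_\l^0[f(|t|)\cos(kx)]=-(A_{k,\l}f)(|t|)\cos(kx)$ for every profile $f$, and that $(E)_{k,\l}$ is exactly the Sturm--Liouville eigenvalue problem for $A_{k,\l}$ on $(0,1)$ with $v'(0)=v(1)=0$, whose first eigenvalue is $\s_k(\l)$. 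With this in hand, part (i) reduces to a Fourier-mode analysis, part (ii) to the self-adjointness of $L_\l^0$ with respect to the weight $\cos(\l t)$ together with the Fredholm index-zero property from Proposition~\ref{fredholm-S2}, and part (iii) to a Feynman--Hellmann-type identity combined with Lemma~\ref{eq:lem-S2-eig-1}(iii).

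For (i): given $v\in N(L_{\l_*}^0)\subset\cX_2^D$, elliptic regularity gives $v\in C^\infty(\overline{\Omega_*})$, and expanding $v(t,x)=\sum_{k\ge0}v_k(|t|)\cos(kx)$ and projecting $L_{\l_*}^0v=0$, $v|_{\partial\Omega_*}=0$ onto each mode shows that $v_k$ solves $A_{k,\l_*}v_k=0$ with $v_k'(0)=v_k(1)=0$; hence if $v_k\not\equiv0$, then $0$ is an eigenvalue of $(E)_{k,\l_*}$. For $k=\ell$ this holds because $\s_\ell(\l_*)=0$ by the choice of $\l_*$, and simplicity of $\s_\ell(\l_*)$ forces $v_\ell$ to be a scalar multiple of $V_{\l_*}$. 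For $k>\ell$ the potential $\frac{(k\l_*)^2}{\cos^2(\l_* t)}$ is pointwise strictly larger than $\frac{(\ell\l_*)^2}{\cos^2(\l_* t)}$ (as $\l_*>0$), so every eigenvalue of $(E)_{k,\l_*}$ strictly exceeds $\s_\ell(\l_*)=0$ and $v_k\equiv0$. For $k<\ell$ (in particular $k=0$), the first eigenvalue of $(E)_{k,\l_*}$ equals $\s_k(\l_*)<\s_\ell(\l_*)=0$ by the same monotonicity, while the second eigenvalue of $(E)_{k,\l_*}$ is at least the second eigenvalue of $(E)_{0,\l_*}$, which equals $\nu_2(\l_*)-\mu(\l_*)$, where $\nu_2(\l_*)$ is the second eigenvalue of $-\frac{1}{\cos(\l_* t)}\partial_t(\cos(\l_* t)\partial_t\cdot)$ on $(0,1)$ with $v'(0)=v(1)=0$; since $\mu(\l_*)$ is the second eigenvalue of the corresponding Neumann problem $v'(0)=v'(1)=0$ (the first being $0$), the interlacing of Dirichlet and Neumann Sturm--Liouville eigenvalues yields $\nu_2(\l_*)>\mu(\l_*)$, so $0$ lies strictly between the first two eigenvalues of $(E)_{k,\l_*}$ and $v_k\equiv0$. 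Thus $v$ is a multiple of $v_*$; the reverse inclusion is immediate since $A_{\ell,\l_*}V_{\l_*}=0$.

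For (ii): by Proposition~\ref{fredholm-S2} the operator $L_{\l_*}^0$ is Fredholm of index zero, so by (i) its range has codimension one in $\cY$. The identity $\partial_{tt}-\l\tan(\l t)\partial_t=\frac{1}{\cos(\l t)}\partial_t(\cos(\l t)\partial_t\cdot)$ shows that $L_\l^0$ is formally self-adjoint with respect to the weight $\cos(\l t)$; integrating by parts in $t$ (boundary terms vanish because $u$ and $v_*$ vanish on $\partial\Omega_*$) and in $x$ (boundary terms vanish by $2\pi$-periodicity) gives $\int_{\Omega_*}v_*\,(L_{\l_*}^0u)\cos(\l_* t)\,dx\,dt=\int_{\Omega_*}u\,(L_{\l_*}^0v_*)\cos(\l_* t)\,dx\,dt=0$ for all $u\in\cX_2^D$, so $R(L_{\l_*}^0)$ is contained in the closed hyperplane $\{w\in\cY:\int_{\Omega_*}v_*w\cos(\l_* t)\,dx\,dt=0\}$ (a proper subspace, since $\int_{\Omega_*}v_*^2\cos(\l_* t)>0$); equality follows by comparing codimensions. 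For (iii): by (ii) it suffices to show $\int_{\Omega_*}v_*\bigl(\partial_\l|_{\l=\l_*}L_\l^0v_*\bigr)\cos(\l_* t)\,dx\,dt\ne0$. Since $L_\l^0v_*=-(A_{\ell,\l}V_{\l_*})(|t|)\cos(\ell x)$ with $V_{\l_*}$ held fixed, differentiating in $\l$ gives $\partial_\l|_{\l_*}L_\l^0v_*=-\bigl((\partial_\l A_{\ell,\l})|_{\l_*}V_{\l_*}\bigr)(|t|)\cos(\ell x)$, so after the elementary $x$-integration the claim reduces to $\int_0^1V_{\l_*}\,(\partial_\l A_{\ell,\l})|_{\l_*}V_{\l_*}\,\cos(\l_* t)\,dt\ne0$. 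Differentiating $A_{\ell,\l}V_\l=\s_\ell(\l)V_\l$ in $\l$, pairing against $V_\l\cos(\l t)$, and using the self-adjointness of $A_{\ell,\l_*}$ together with $A_{\ell,\l_*}V_{\l_*}=0$ (so that all terms involving $\partial_\l V_\l$ drop out), one obtains $\int_0^1V_{\l_*}\,(\partial_\l A_{\ell,\l})|_{\l_*}V_{\l_*}\,\cos(\l_* t)\,dt=\s_\ell'(\l_*)\int_0^1V_{\l_*}^2\cos(\l_* t)\,dt$, which is strictly positive since $\s_\ell'(\l_*)>0$ by Lemma~\ref{eq:lem-S2-eig-1}(iii), $V_{\l_*}>0$ on $(0,1)$ by (\ref{eq:positivity-v-lambda}), and $\cos(\l_* t)>0$ (because $\l_*<1<\frac{\pi}{2}$).

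The step I expect to be the main obstacle is the exclusion of kernel contributions from the low modes $k<\ell$ in part (i): knowing that the first eigenvalue $\s_k(\l_*)$ is negative is not enough — one must also rule out $0$ being a \emph{higher} eigenvalue of $(E)_{k,\l_*}$, which is precisely where the comparison of the second eigenvalue of $(E)_{0,\l_*}$ with $\mu(\l_*)$ via the interlacing of Dirichlet and Neumann eigenvalues is required. Once this is in place, parts (ii) and (iii) go through along the lines of Proposition~\ref{propCR-ND}, with the sign of $\s_\ell'(\l_*)$ from Lemma~\ref{eq:lem-S2-eig-1}(iii) — the payoff of the somewhat delicate estimates in Lemma~\ref{eq:lem-S2-eig-1} — supplying transversality.
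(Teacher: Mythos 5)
Your proposal is correct and reaches the same conclusion via the same overall architecture (Fourier decomposition, spectral analysis of the one-dimensional operators $A_{k,\lambda}$, Fredholm index zero plus $\cos(\lambda_* t)$-weighted self-adjointness, and a Feynman--Hellmann-type transversality computation); parts (ii) and (iii) are essentially the paper's own argument. The genuine difference lies in part (i). The paper treats all modes $k\neq\ell$ simultaneously: pairing the equation $A_{k,\lambda_*}w_k=0$ against the positive function $V_{\lambda_*}$ yields the single orthogonality identity $\bigl((\ell\lambda_*)^2-(k\lambda_*)^2\bigr)\int_0^1 \frac{w_kV_{\lambda_*}}{\cos(\lambda_* t)}\,dt=0$, which (for $w_k\not\equiv0$) forces $w_k$ to change sign, so $0$ cannot be the first eigenvalue of $(E)_{k,\lambda_*}$; the conclusion then follows from $0\geq\sigma_{2,k}(\lambda_*)\geq\sigma_{2,0}(\lambda_*)\geq0$ and the observation that equality would produce a nontrivial eigenfunction of the Neumann problem satisfying the extra condition $u(1)=0$, impossible by ODE uniqueness. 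You instead split cases: for $k>\ell$, strict pointwise monotonicity of the potential gives $\sigma_k(\lambda_*)>\sigma_\ell(\lambda_*)=0$, so all eigenvalues of $(E)_{k,\lambda_*}$ are positive and the argument is shorter than the paper's here; for $k<\ell$ you show $\sigma_k(\lambda_*)<0<\sigma_{2,k}(\lambda_*)$, the second inequality resting on $\sigma_{2,0}(\lambda_*)=\nu_2(\lambda_*)-\mu(\lambda_*)>0$ via interlacing of the Sturm--Liouville eigenvalues under changing the boundary condition at $t=1$ from Neumann to Dirichlet. This interlacing inequality is precisely what the paper establishes indirectly through its variational/equality-rigidity argument, so the two approaches hinge on the same spectral fact but package it differently; your case split avoids the orthogonality identity entirely at the cost of invoking the boundary-condition interlacing as a named principle rather than deriving it in situ.
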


\begin{proof}
(i)   Let $w \in \cX_2^D $ be such that $L_{\l_*}^0w=0$ in $ \O_*$. By standard elliptic regularity theory, we have $w \in C^\infty(\ov{\O_*})$, and we can expand $w$ as a uniformly convergent Fourier series in the $x$-variable of the form $w(t,x)=\sum \limits_{k=0}^\infty w_k(|t|)\cos(kx)$, where, for every $k \ge 0$, the coefficient function $w_k(t):= \frac{1}{\sqrt{2\pi}}\int_{0}^{2\pi} w(t,x)\cos (k x)\,dx$ is an eigenfunction of the eigenvalue problem $(E)_{k,\l_*}$ (see (\ref{eq:ell-eigenvalue-problem})) 
corresponding to the eigenvalue $\sigma = 0$. Considering $w= w_k$ and $\s = 0$ in $(E)_{k,\l_*}$, multiplying the equation by $V_{\l_*}(t)\cos(t\l_*)$ and integrating by parts on $(0,1)$ gives
\begin{equation}
  \label{eq:ell-k-eq}
((\ell \l_*)^2-(k \l_*)^2)\int_{0}^{1} \frac{ w_k V_{\l_*}}{\cos(t\l_*)}  dt =0.
\end{equation}
If $k\not=\ell$ and $w_k \not =0$, it follows from (\ref{eq:positivity-v-lambda}) and (\ref{eq:ell-k-eq}) that $w_k$ changes sign in $(0,1)$. Therefore $0$ cannot be the first eigenvalue of the eigenvalue problem $(E)_{k,\l_*}$. Letting $\sigma_{2,k}(\lambda_*)>\sigma_{k}(\lambda_*)$ denote the second eigenvalue of $(E)_{k,\l_*}$, we thus deduce that
$$
0 \ge \sigma_{2,k}(\lambda_*) \ge \sigma_{2,0}(\lambda_*),
$$
where the second inequality easily follows from the nonnegativity of the function $t \mapsto \frac{(k \lambda_*)^2}{\cos^2 (\lambda_* t)}$ and the Courant-Fischer min-max principle characterizations of $\sigma_{2,k}(\lambda_*)$ and $\sigma_{2,0}(\lambda_*)$. The latter also shows, together with the variational characterization of $\mu(\lambda_*)$, that 
\begin{align*}
  \sigma_{2,0}(\lambda_*) &= \inf_{u\in \cH(0,1), \int_0^1 u(t)  \cos(t\l_*)dt=0 } \frac{ \int_{0}^{1} (u')^2\cos(t\l_*)  dt    }{   \int_{0}^{1} u^2\cos(t\l_*)  dt  }-\mu(\lambda_*)\\
 &\ge \inf_{u\in H^1(0,1), \int_0^1 u(t)  \cos(t\l_*)dt=0 } \frac{ \int_{0}^{1} (u')^2\cos(t\l_*)  dt    }{   \int_{0}^{1} u^2\cos(t\l_*)  dt  }-\mu(\lambda_*)=0. 
\end{align*}
Hence equality must hold in the last inequality, which then shows that the latter infimum is attained by a nonzero function $u\in \cH(0,1)$ which then solves (\ref{eq:scalar-eigenvalue}) with $\mu = \mu(\lambda_*)$ and the additional boundary condition $u(1)=0$.
This is impossible, and thus we conclude that $w_k \equiv 0$ for $k \not = \ell$.

This shows that the kernel $N(L_{\l_*}^0)$ is contained in the span of the function $v_*$ defined in \eqref{eq:kernel-ND-S2}. On the other hand, since $\sigma_\ell(\lambda_*)=0$ by Lemma \ref{eq:lem-S2-eig-1}(iii),  we also have $L_{\l_*}^0 v_* = 0$, and hence the claim follows.\\
(ii) Since $L_{\l_*}^0$ is a Fredholm operator of index zero by Proposition~\ref{fredholm-S2}, it suffices to show that 
$$
  R(L_{\l_*}^0 ) \subset \left \lbrace  w\in \cY: \int_{\O_* } v_{*}(t,x)w(t,x) \cos(t\l_*)\,dxdt=0\right\rbrace.
$$
This follows easily since for every $w = L_{\l_*}^0 u \in   R(L_{\l_*}^0 )$ we have, by integration by parts,
$$
\int_{\O_* } v_{*}(t,x)w(t,x) \cos(t\l_*)\,dxdt=\int_{\O_* } [L_{\l_*}^0 v_{*}](t,x)u(t,x) \cos(t\l_*)\,dxdt = 0.
$$
(iii) In the following, for $\lambda \in (0,1)$, we let $w_\lambda \in C^2(\overline{\Omega_*})$ be defined by
$$
w_\lambda (t,x) \mapsto V_\l(|t|)\cos (\ell x),
$$
so that $v_*= w_{\lambda_*}$ and $ L_\l^0 w_\l = \sigma_{\ell}(\lambda)w_\l$ for $\lambda \in (0,1)$ by definition of $V_\l$. Differentiating this identity
at $\lambda = \lambda_*$ 
and setting $w_*:= \frac{d}{d\lambda}|_{\l=\l_*} w_\lambda$, we obtain 
$$
\Bigl(\frac{d}{d\lambda}|_{\l=\l_*} L_\l^0\Bigr)v_{*} + L_{\l_*}^0 w_{*} = \sigma_{\ell}'(\lambda_*)v_{*} +
\sigma_{\ell}(\lambda_*)w_{*} = \sigma_{\ell}'(\lambda_*) v_{*},
$$
where $\sigma_{\ell}'(\lambda_*)>0$ by Lemma~\ref{eq:lem-S2-eig-1}. So $\Bigl(\frac{d}{d\lambda}|_{\l=\l_*} L_\l^0\Bigr)v_{*} \in   R(L_{\l_*}^0 )$ would imply that $v_* \in R(L_{\l_*}^0 )$, which is impossible by (ii). Hence (iii) holds.
\QED
\end{proof}

\subsection*{Proof of Theorem~\ref{Theo1-ND-S2} (completed)}

As in the case of Theorem~\ref{Theo1-ND}, the proof is obtained by an application
of the Crandall-Rabinowitz Bifurcation theorem. Let $\lambda_0 \in (0,1)$ be given, and choose $\ell_0 = \ell_0(\lambda_0)$ as in Lemma~\ref{eq:lem-S2-eig-1}. Moreover, for $\ell \ge \ell_0$, we choose $\l_* \in (0,\lambda_0)$ as in Lemma~\ref{eq:lem-S2-eig-1}, which implies that the claims of Proposition~\ref{propCR-ND-S2} hold with $L_\lambda^0 = D G_\lambda(0): \cX_2^D \to Y$. Consequently, by the Crandall-Rabinowitz Theorem (see \cite[Theorem 1.7]{M.CR}) applied to the map
$$
  (-\l_*,1-\l_*) \times \calU \to Y,\qquad (\lambda,u) \mapsto  G_{\lambda+\l_*}(u), 
$$
there  exist  ${\e_0}>0$ and a smooth curve
$$
(-{\e_0},{\e_0}) \to   (0,+\infty) \times \cX_2^D ,\qquad r \mapsto (\lambda(r) ,\varphi_{r})
$$
with $G_{\l(r)}(\varphi_{r}) =0$, $\lambda(0)= \l_*$ and 
\begin{equation}
  \label{eq:expansion-phi-s-s-2}
\varphi_{r} = r v_{*}+o(r) \qquad \text{in $\cX_2^D\quad $ as $r \to 0$,}
\end{equation}
where $v_{*}$ is given in (\ref{eq:kernel-ND-S2}). By~(\ref{eq:G-equivalence}), this means that problem (\ref{h-Neu-over-S2}) with
$$
\mu= \tilde \mu_r: = \mu(\lambda(r))/\lambda(r)^2, \qquad \quad h= \tilde h_r:= \frac{\lambda(r)}{1+h_{\varphi_r}}
$$
admits, for every $r \in (-\eps_0,\eps_0)$, a nontrivial solution. Moreover, by (\ref{eq:expansion-phi-s-s-2}) and the definition of $v_*$ we have
\begin{align*}
  \tilde h_r(x)= \frac{\lambda(r)}{ 1 + \frac{{\de}_t \varphi_r(1,x)}{U_{\l(r)}''(1) }} &= \lambda(r)\Bigl(1 -  r \frac{{\de}_t v_{*}(1,x)}{U_{\l(r)}''(1)} + o(r)\Bigr)\\
        &=\lambda(r) -  r \frac{\lambda_*  V_{\lambda_*}'(1)}{ U_{\l_*}''(1)}\cos(\ell x) + o(r). 
\end{align*}
We note that $U_{\l_*}''(1) \not = 0$ and $V_{\lambda_*}'(1) \not = 0$, since these functions are nontrivial solutions of linear
second order ODEs, and they satisfy $U_{\l_*}'(1)= 0$ and $V_{\lambda_*}(1)= 0$. Hence we may set $\eps:= \bigl|\frac{V_{\lambda_*}'(1)}{U_{\l_*}''(1)}\bigr|\lambda_* \eps_0>0$ and consider the reparametrisation
$$
(-{\e},{\e}) \to (-{\e_0},{\e_0}),\qquad s \mapsto r(s):= -\frac{s U_{\l_*}''(1) }{\lambda_*   V_{\lambda_*}'(1)}
$$
to define the curve
$$
(-{\e},{\e}) \to   (0,+\infty) \times (0,+\infty) \times \cX_2^D ,\qquad s \mapsto (\mu_s, \xi_s, h_s)
$$
with 
$$
\mu_s := \tilde \mu_{r(s)}, \qquad \xi_s:= \lambda(r(s)) \qquad \text{and}\qquad h_s := \tilde h_{r(s)}.
$$
Recalling that $\xi_0= \lambda(r(0)) = \lambda(0)= \lambda_* \in (0,\lambda_0)$, we may make $\eps>0$ smaller if necessary to
guarantee that $\xi_s \in (0,\lambda_0)$ for $s \in (-{\e},{\e})$. Consequently, the curve $s \mapsto (\mu_s, \xi_s, h_s)$ has the properties asserted in Theorem~\ref{Theo1-ND-S2}, and the proof is thus finished.
\QED

\end{document}